\newtheorem{thm}{Theorem}[section]
\newtheorem{cor}[thm]{Corollary}
\newtheorem{lem}[thm]{Lemma}
\newtheorem{prop}[thm]{Proposition}
\theoremstyle{definition}
\newtheorem{exmpl}[thm]{Example}
\newtheorem{definition}[thm]{Definition}
\newtheorem{remark}[thm]{Remark}
\renewcommand{\epsilon}{\varepsilon}
\renewcommand{\phi}{\varphi}
\newcommand{\defeq}{\mathrel{\mathop:}=}                         
\newcommand{\eqdef}{\mathrel{\mathopen={\mathclose:}}}
\DeclareMathOperator{\St}{St}
\DeclareMathOperator{\dom}{dom}
\DeclareMathOperator{\spt}{spt}
\DeclareMathOperator{\diam}{diam}
\DeclareMathOperator{\Aut}{Aut}
\begin{document}

\onehalfspace

\setlist{noitemsep}

\author{Friedrich Martin Schneider}
\address{F.M.S., Institut f\"ur Algebra, TU Dresden, 01062 Dresden, Germany }

\author{Andreas Thom}
\address{A.T., Institut f\"ur Geometrie, TU Dresden, 01062 Dresden, Germany }

\title{Topological matchings and amenability}
\date{\today}

\begin{abstract} 
  We establish a characterization of amenability for general Hausdorff topological groups in terms of matchings with respect to finite uniform coverings. Furthermore, we prove that it suffices to just consider two-element uniform coverings. We also show that extremely amenable as well as compactly approximable topological groups satisfy a perfect matching property condition -- the latter even with regard to arbitrary (i.e., possibly infinite) uniform coverings. Finally, we prove that the automorphism group of a Fra\"iss\'e limit of a metric Fra\"iss\'e class is amenable if and only if the considered metric Fra\"iss\'e class has a certain Ramsey-type matching property.
\end{abstract}

\maketitle


\tableofcontents

\section{Introduction}

In this paper we study the various characterizations of amenability for general topological groups.
A Hausdorff topological groups $G$ is said to be \emph{amenable} if there exists a left-invariant mean on the algebra of bounded, uniformly continuous, real-valued functions on $G$. In case $G$ is discrete, this corresponds to the existence of a left-invariant finitely additive probability measure on $G$, and a classical result \cite{folner} characterizes amenability for discrete groups by the existence of so-called \emph{F\o lner sets} -- finite subsets of the group that are almost invariant with respect to a finite set of translations. Our motivation to write this note was to provide a F\o lner-type characterization of amenability for general Hausdorff topological groups. Indeed, we show that a topological group $G$ is amenable if and only if, for every finite uniform covering of $G$ and every finite subset $E$ of $G$, there exists a finite non-empty subset $F$ of $G$ such that every $E$-translate of $F$ can be \emph{almost matched with respect to the uniform covering} with $F$ (see Theorem~\ref{theorem:amenable.groups}). By a matching of subsets with respect to a covering we mean a bijection which respects the covering. Along the way towards Theorem~\ref{theorem:amenable.groups}, we also prove an analogous, but slightly more general characterization of amenability for perfect Hausdorff uniform dynamical systems (see Corollary~\ref{corollary:amenable.perfect.hausdorff.dynamical.systems}).

Furthermore, we show that a topological group $G$ is amenable if and only if it satisfies the matching condition above with respect to all two-element uniform coverings (see Theorem~\ref{theorem:two.element.coverings}) -- generalizing a result of Moore  \cite{moore}. As a consequence, it follows that $G$ is amenable if and only if every single bounded uniformly continuous real-valued function on $G$ can be averaged invariantly (see Corollary~\ref{corollary:measuring.single.functions}), generalizing a result of \cite{kaichouh}. We also reformulate our matching conditions for non-archimedean groups in terms of coset colorings (see Corollary~\ref{coro:totally.disconnected}). As an application, we show that a non-archimedean Hausdorff topological group $G$ is amenable if and only if every minimal sub-flow of the canonical $G$-flow on $2^{H\backslash G}$ for an open subgroup $H \leq G$ is amenable (see Corollary~\ref{corollary:non.archimedean}). In the classical case of discrete groups discussed above, our results provide a characterization of amenability by means of a weak form of F\o lner sets (see Corollary~\ref{coro.discrete}), which was proven recently by Moore \cite{moore}. 

Moreover, we establish certain perfect matching results with respect to uniform coverings of compactly approximable and extremely amenable groups (see Proposition~\ref{proposition:compactly.approximable} and Corollary~\ref{corollary:extremely.amenable.groups}), which yields a characterization of injectivity of von Neumann algebra in terms of a combinatorial property of its unitary group. Finally, we draw a connection to continuous logic by showing that the automorphism group of a Fra\"iss\'e limit of a metric Fra\"iss\'e class is amenable if and only if the considered metric Fra\"iss\'e class satisfies a certain Ramsey-type matching condition (see Theorem~\ref{theorem:metric.ramsey}).

\vspace{0.2cm}

The paper is organized as follows. Section~\ref{section:uniform.spaces} is supposed to provide some background regarding uniform spaces. In Section~\ref{section:amenability} we recall basic notions and facts concerning means on function spaces and then discuss the concept of amenability for dynamical systems in general and for topological groups in particular. Section~\ref{section:matchings} gives a brief reminder on matchings in bipartite graphs, including Hall's marriage theorem. In Section~\ref{section:matchings.in.dynamical.systems} we prove a characterization of amenability for perfect Hausdorff uniform dynamical systems in terms of matchings with respect to finite uniform coverings. In Section~\ref{section:matchings.in.topological.groups} we prove the aforementioned amenability criteria for general Hausdorff topological groups. In Section~\ref{section:colorings.of.non.archimedean.groups} we revisit the established matching criteria for non-archimedean groups. In Section~\ref{section:strong.matching.conditions} we discuss several strengthened matching conditions satisfied by compact, compactly approximable, or extremely amenable groups, respectively. Finally, in Section~\ref{section:ramsey.theory} we prove the mentioned correspondence between a certain Ramsey-type matching property for a metric Fra\"iss\'e class and the amenability of the automorphism group of its Fra\"iss\'e limit.

\section{Uniform spaces and their automorphism groups}\label{section:uniform.spaces}

In this section we shall recall the very basics concerning uniform spaces -- also in order to keep this paper reasonably self-contained. We will follow the approach of \cite{Isbell}.

In order to introduce the concept of a uniform space, we shall need some set-theoretic basics. Let $X$ be a set. We denote by $\mathcal{P}(X)$ the set of all subsets of $X$. Let $\mathcal{U},\mathcal{V} \subseteq \mathcal{P}(X)$. We say that $\mathcal{V}$ \emph{refines} $\mathcal{U}$ and write $\mathcal{U} \preceq \mathcal{V}$ if \begin{displaymath}
	\forall V \in \mathcal{V} \, \exists U \in \mathcal{U} \colon \, V \subseteq U .
\end{displaymath} Furthermore, let $\mathcal{U} \wedge \mathcal{V} \defeq \mathcal{U} \cup \mathcal{V}$ and $\mathcal{U} \vee \mathcal{V} \defeq \{ U \cap V \mid U \in \mathcal{U}, \, V \in \mathcal{V} \}$. More generally, if $(\mathcal{U}_{i})_{i \in I}$ is a family of subsets of $\mathcal{P}(X)$, then we define $\bigwedge\nolimits_{i \in I} \mathcal{U}_{i} \defeq \bigcup\nolimits_{i \in I} \mathcal{U}_{i}$ and \begin{displaymath}
	\left. \bigvee\nolimits_{i \in I} \mathcal{U}_{i} \defeq \left\{ \bigcap\nolimits_{i \in I} U_{i} \, \right| (U_{i})_{i \in I} \in \prod\nolimits_{i \in I} \mathcal{U}_{i} \right\} .
\end{displaymath} For a subset $S \subseteq X$, we call $\St (S,\mathcal{U}) \defeq \bigcup \{ U \in \mathcal{U} \mid U \cap S \ne\emptyset \}$ the \emph{star} of $S$ with respect to $\mathcal{U}$. Likewise, given any $x \in X$, we call $\St (x,\mathcal{U}) \defeq \St (\{ x \},\mathcal{U})$ the \emph{star} of $x$ with respect to $\mathcal{U}$. Moreover, the \emph{star} of $\mathcal{U}$ is defined to be $\mathcal{U}^{\ast} \defeq \{ \St (U,\mathcal{U}) \mid U \in \mathcal{U} \}$. Besides, let $\mathcal{U}^{\ast, 0} = \mathcal{U}$ and $\mathcal{U}^{\ast, n+1} \defeq (\mathcal{U}^{\ast, n})^{\ast}$ for every $n \in \mathbb{N}$. We say that $\mathcal{V}$ is a \emph{star-refinement} of $\mathcal{U}$ and write $\mathcal{U} \preceq^{\ast} \mathcal{V}$ if $\mathcal{U} \preceq \mathcal{V}^{\ast}$. We shall call $\mathcal{U}$ a \emph{covering} of $X$ if $X = \bigcup \mathcal{U}$. We denote by $\mathcal{C}(X)$ the set of all coverings of $X$. A \emph{uniformity} on $X$ is a non-empty subset $\mathcal{D} \subseteq \mathcal{C}(X)$ such that \begin{enumerate}
	\item[(1)] 	$\forall \mathcal{U} \in \mathcal{D} \, \forall \mathcal{V} \in \mathcal{C}(X) \colon \, \mathcal{V} \preceq \mathcal{U} \Longrightarrow \mathcal{V} \in \mathcal{D}$,
	\item[(2)]	$\forall \mathcal{U},\mathcal{V} \in \mathcal{D} \, \exists \mathcal{W} \in \mathcal{D} \colon \, \mathcal{U} \preceq^{\ast} \mathcal{W} , \, \mathcal{V} \preceq^{\ast} \mathcal{W}$.
\end{enumerate}

Now we come to uniform spaces. A \emph{uniform space} is a non-empty set $X$ equipped with a uniformity on $X$, whose elements are called the \emph{uniform coverings} of the uniform space $X$. Let $X$ be a uniform space. The set of all finite uniform coverings of $X$ shall be denoted by $\mathcal{N}(X)$. The \emph{topology of $X$} is defined as follows: a subset $S \subseteq X$ is \emph{open} in $X$ if, for every $x \in S$, there exists a uniform covering $\mathcal{U}$ of $X$ such that $\St (x,\mathcal{U}) \subseteq S$. Let $Y$ be another uniform space. A map $f \colon X \to Y$ is said to be \emph{uniformly continuous} if $f^{-1}(\mathcal{U}) \defeq \{ f^{-1}(U) \mid U \in \mathcal{U} \}$ is a uniform covering of $X$ whenever $\mathcal{U}$ is a uniform covering of $Y$. We denote by ${\rm UC}(X,Y)$ the set of all uniformly continuous functions from $X$ to $Y$. A bijection $f \colon X \to Y$ is called an \emph{isomorphism} if both $f$ and $f^{-1}$ are uniformly continuous maps. By an \emph{automorphism of $X$}, we mean an isomorphism from $X$ to itself. The automorphism group of $X$ shall be denoted by $\Aut (X)$. Note that any uniformly continuous map between uniform spaces is continuous with regard to the respective topologies.

It is well known that any metric space constitutes a uniform space: if $X$ is a metric space, then we may consider $X$ as a uniform space by equipping it with the \emph{induced uniformity}, that is, $\{ \mathcal{U} \subseteq \mathcal{P}(X) \mid \exists r > 0 \colon \, \mathcal{U} \preceq \{ B(x,r) \mid x \in X \} \}$. This particularly applies to the space of real numbers. Concerning a uniform space $X$, we denote by ${\rm UC}(X)$ the set of all uniformly continuous functions from $X$ to $\mathbb{R}$, and we put ${\rm UC}_{b}(X) \defeq {\rm UC}(X) \cap \ell^{\infty}(X)$.

Another example of uniform spaces is provided by the class of compact Hausdorff spaces. In fact, if $X$ is a compact Hausdorff space, then $\{ \mathcal{U} \subseteq \mathcal{P}(X) \mid \exists \mathcal{V} \in \mathcal{C}(X) \text{ open}\colon \, \mathcal{U} \preceq \mathcal{V} \}$ is the unique uniformity on $X$ inducing the topology of $X$. In particular, if $X$ is metrizable, then the uniformity above coincides with any uniformity on $X$ induced by a metric generating the topology of $X$. Furthermore, a mapping from a compact Hausdorff space into any uniform space is continuous if and only if it is uniformly continuous. For further reading about the uniform structure of compact Hausdorff spaces, we refer to \cite{Isbell}.

\begin{lem}\label{lemma:uniform.tiles} Let $X$ be a uniform space, let $H \subseteq {\rm UC}_{b}(X)$ be finite and $\epsilon > 0$. Then there exists $\mathcal{U} \in \mathcal{N}(X)$ such that $\diam f(U) \leq \epsilon$ for all $U \in \mathcal{U}$ and $f \in H$. \end{lem}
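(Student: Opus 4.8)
The plan is to reduce to the case of a single function and then take a common refinement over the finitely many functions in $H$. I expect the main obstacle to be reconciling \emph{finiteness} of the covering with its being a genuine \emph{uniform} covering: pulling back a covering of $\mathbb{R}$ by short intervals along a function in $H$ produces a uniform covering with the right diameter bound, but this covering is infinite, and a finite subcovering of a uniform covering need not be uniform. I would resolve this using boundedness of the functions together with the coarsening axiom~(1), which guarantees that any covering refined by a uniform covering is itself uniform.

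For a single $f \in {\rm UC}_{b}(X)$ I would argue as follows. Since $f$ is bounded, $f(X) \subseteq [a,b]$ for some reals $a \leq b$. I would take the standard uniform covering $\mathcal{V}$ of $\mathbb{R}$ by open intervals of length $\epsilon$ (say, centred at the multiples of $\epsilon/2$), which has a positive Lebesgue number and satisfies $\diam V = \epsilon$ for all $V \in \mathcal{V}$, and pull it back: by uniform continuity $f^{-1}(\mathcal{V})$ is a uniform covering of $X$, and $\diam f(f^{-1}(V)) \leq \diam V = \epsilon$ for each $V$. To fix finiteness I would discard the empty fibres, exploiting boundedness: only finitely many $V \in \mathcal{V}$ meet the bounded set $[a,b]$, and every other $V$ has $f^{-1}(V) = \emptyset$ because $f(X) \subseteq [a,b]$. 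The finite collection $\mathcal{U}_{f}$ of the non-empty members of $f^{-1}(\mathcal{V})$ still covers $X$, and it is refined by $f^{-1}(\mathcal{V})$ (each empty fibre sits inside any fixed non-empty one, which exists as $X \neq \emptyset$), so $\mathcal{U}_{f} \preceq f^{-1}(\mathcal{V})$ and axiom~(1) yields $\mathcal{U}_{f} \in \mathcal{N}(X)$.

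To finish, writing $H = \{ f_{1}, \dots, f_{m} \}$, I would form the common refinement $\mathcal{U} \defeq \bigvee_{j=1}^{m} \mathcal{U}_{f_{j}}$. This is finite, and each of its members $\bigcap_{j} U_{j}$ (with $U_{j} \in \mathcal{U}_{f_{j}}$) is contained in $U_{j}$, so $\diam f_{j}(\bigcap_{j} U_{j}) \leq \diam f_{j}(U_{j}) \leq \epsilon$ for every $j$, which is the required estimate. The one point needing care is that $\mathcal{U}$ is a uniform covering; I would prove that the join of two uniform coverings $\mathcal{U}', \mathcal{U}''$ is uniform and then iterate. For this, axiom~(2) provides a uniform covering $\mathcal{W}$ with $\mathcal{U}' \preceq^{\ast} \mathcal{W}$ and $\mathcal{U}'' \preceq^{\ast} \mathcal{W}$, so that each $W \in \mathcal{W}$ satisfies $W \subseteq \St(W,\mathcal{W}) \subseteq U'$ and $W \subseteq \St(W,\mathcal{W}) \subseteq U''$ for suitable $U' \in \mathcal{U}'$ and $U'' \in \mathcal{U}''$; hence $W \subseteq U' \cap U'' \in \mathcal{U}' \vee \mathcal{U}''$, which shows $\mathcal{U}' \vee \mathcal{U}'' \preceq \mathcal{W}$, and axiom~(1) then gives $\mathcal{U}' \vee \mathcal{U}'' \in \mathcal{N}(X)$.
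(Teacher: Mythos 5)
Your argument is correct, but it takes a different route from the paper's. The paper forms the single uniformly continuous map $\Phi \colon X \to K \defeq \prod_{f \in H} \overline{f(X)}$ into a \emph{compact} metric space, takes a finite open covering of $K$ of diameter at most $\epsilon$, and pulls it back; finiteness and uniformity come for free because on a compact Hausdorff space every finite open covering is uniform (the unique-uniformity fact recalled in Section~\ref{section:uniform.spaces}), so no pruning of empty members and no join construction are needed. You instead work one function at a time against an explicit uniform covering of $\mathbb{R}$ with positive Lebesgue number, repair finiteness by hand using boundedness together with the coarsening axiom~(1), and then verify from axiom~(2) that $\bigvee_{j}\mathcal{U}_{f_{j}}$ is again a finite uniform covering. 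All of these steps check out: discarding empty fibres does not destroy the covering property, $f^{-1}(\mathcal{V})$ does refine the pruned family (so axiom~(1) applies), and your star-refinement argument for the join is exactly right since $W \subseteq \St(W,\mathcal{W})$. What your approach buys is self-containedness — it uses only the covering axioms and the metric uniformity on $\mathbb{R}$, with no appeal to the theory of uniformities on compact spaces — at the cost of being longer; the auxiliary fact you prove along the way, that $\mathcal{N}(X)$ is closed under finite joins, is anyway implicitly used elsewhere in the paper (e.g.\ in the proof of Proposition~\ref{proposition:generating.subsets}), so making it explicit is no loss.
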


\begin{proof} Consider the compact metric space $K \defeq \prod_{f \in H} \overline{f(X)}$ equipped with the usual Euclidean metric. There exists a finite open covering $\mathcal{V}$ of $K$ such that $\diam (V) \leq \epsilon$ for all $V \in \mathcal{V}$. Since $H$ is a set of uniformly continuous functions, $\Phi \colon X \to K, \, x \mapsto (f(x))_{f \in H}$ is uniformly continuous as well. Hence, $\mathcal{U} \defeq \Phi^{-1}(\mathcal{V})$ is a finite uniform covering of $X$. Besides, $\diam (f(\Phi^{-1}(V))) \leq \diam (V) \leq \epsilon$ for all $V \in \mathcal{V}$. Hence, $\diam (U) \leq \epsilon$ for all $U \in \mathcal{U}$. \end{proof}

We shall need some further observations concerning finite uniform coverings.

\begin{lem}[\cite{Isbell}]\label{lemma:uniform.star.refinements} Let $X$ be a uniform space. If $\mathcal{U}$ is a finite uniform covering of $X$, then there exists a finite open uniform covering $\mathcal{V}$ of $X$ such that $\mathcal{U} \preceq^{\ast} \mathcal{V}$. \end{lem}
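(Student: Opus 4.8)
The plan is to avoid manipulating $X$ directly and instead transport the problem into a compact metric space—the geometric realisation of the nerve of $\mathcal{U}$—where finite open star-refinements are cheap, and then pull the refinement back along a uniformly continuous map. Write $\mathcal{U} = \{U_{1},\dots,U_{n}\}$; if some $U_{i} = X$ we may simply take $\mathcal{V} = \{X\}$, so assume $U_{i} \ne X$ for all $i$. Two formal facts make the pullback work. First, if $\Phi \colon X \to Y$ is uniformly continuous and $\mathcal{V}_{0}$ is a finite open uniform covering of $Y$, then $\Phi^{-1}(\mathcal{V}_{0})$ is a finite open uniform covering of $X$: finiteness and openness are immediate, and uniformity is exactly the definition of uniform continuity. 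Second, preimages preserve stars, i.e. $\St(\Phi^{-1}(V),\Phi^{-1}(\mathcal{V}_{0})) \subseteq \Phi^{-1}(\St(V,\mathcal{V}_{0}))$, since $\Phi^{-1}(V') \cap \Phi^{-1}(V) \ne \emptyset$ forces $V' \cap V \ne \emptyset$. Consequently, if $\mathcal{V}_{0}$ star-refines a covering $\mathcal{U}_{0}$ of $Y$ and $\Phi^{-1}(\mathcal{U}_{0})$ refines $\mathcal{U}$, then $\Phi^{-1}(\mathcal{V}_{0})$ star-refines $\mathcal{U}$, because a star-refinement of a refinement of $\mathcal{U}$ is a star-refinement of $\mathcal{U}$.

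Next I would construct the map $\Phi$ as a uniform partition of unity subordinate to $\mathcal{U}$. Iterating axiom~(2) yields uniform coverings $\mathcal{U} = \mathcal{W}_{0}, \mathcal{W}_{1}, \dots$ with $\mathcal{W}_{k} \preceq^{\ast} \mathcal{W}_{k+1}$ for all $k$, and the standard chaining (pseudometrisation) construction produces a uniformly continuous pseudometric $d \le 1$ on $X$ adapted to this chain, in the sense that $B_{d}(x,c) \subseteq \St(x,\mathcal{W}_{1})$ for all $x$ and some fixed $c > 0$. Set $\phi_{i}(x) \defeq d(x, X \setminus U_{i})$. Each $\phi_{i}$ is $d$-Lipschitz, hence bounded and uniformly continuous, and $\phi_{i}(x) > 0$ forces $x \in U_{i}$, so $\{\phi_{i} > 0\} \subseteq U_{i}$. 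Since $\mathcal{W}_{1}$ star-refines $\mathcal{U}$, every $x$ lies in some $W \in \mathcal{W}_{1}$ with $\St(W,\mathcal{W}_{1}) \subseteq U_{i}$, whence $B_{d}(x,c) \subseteq \St(x,\mathcal{W}_{1}) \subseteq U_{i}$ and $\phi_{i}(x) \ge c$. Thus $s \defeq \sum_{j} \phi_{j} \ge c > 0$ everywhere, and $\Phi \defeq (\phi_{1}/s,\dots,\phi_{n}/s)$ is a uniformly continuous map of $X$ into the standard simplex $\Delta \subseteq \mathbb{R}^{n}$.

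It then remains to work on the simplex. The open vertex stars $\mathcal{U}_{0} \defeq \{\{y \in \Delta : y_{i} > 0\} : 1 \le i \le n\}$ form a finite open covering of the compact metric space $\Delta$, and $\Phi^{-1}(\{y_{i} > 0\}) = \{\phi_{i} > 0\} \subseteq U_{i}$, so $\Phi^{-1}(\mathcal{U}_{0})$ refines $\mathcal{U}$. As $\Delta$ is compact metric, $\mathcal{U}_{0}$ admits a finite open star-refinement $\mathcal{V}_{0}$: pass to a finite subcover of the covering of $\Delta$ by balls of radius one quarter of a Lebesgue number of $\mathcal{U}_{0}$, which star-refines $\mathcal{U}_{0}$ and is uniform because on a compact Hausdorff space every open covering is uniform. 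By the first paragraph, $\mathcal{V} \defeq \Phi^{-1}(\mathcal{V}_{0})$ is a finite open uniform covering of $X$ with $\mathcal{U} \preceq^{\ast} \mathcal{V}$, as desired.

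The only non-formal ingredient is the uniform partition of unity, equivalently the pseudometrisation of the chain $(\mathcal{W}_{k})$—a uniform Urysohn lemma—and this is where the star-refinement axiom~(2) does the substantive work; I would invoke it as standard machinery (it is part of the development in \cite{Isbell}) rather than reprove it. I would also flag why the more naive route fails, as this motivates the detour through the nerve: one cannot simply use the $n$ eroded sets $G_{i} = \{x : \St(x,\mathcal{W}) \subseteq U_{i}\}$, since overlapping members $U_{i}, U_{j}$ produce overlapping erosions, so that $\St(G_{i},\{G_{1},\dots,G_{n}\})$ may swell up to $U_{i} \cup U_{j}$ and escape every single member of $\mathcal{U}$. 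A finite star-refinement typically requires strictly more than $n$ sets, arranged in overlapping chains, and it is precisely this local structure that the pseudometric $d$ supplies.
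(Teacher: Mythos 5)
Your proof is correct, but note that the paper does not prove this lemma at all: it is quoted verbatim from Isbell's book, so any comparison is with Isbell's development rather than with an argument in the text. Isbell obtains the statement by combining two separately proved facts -- that every finite uniform covering has a \emph{finite} uniform star-refinement (a counting argument on normal sequences), and that every uniform covering has an open uniform refinement (via "uniform interiors") -- whereas you route everything through a single nerve map $\Phi\colon X\to\Delta$ into the standard simplex and harvest the star-refinement from the Lebesgue-number lemma there. This is a clean and legitimate alternative: the pullback formalities ($\Phi^{-1}$ preserves finiteness, openness, uniformity, and satisfies $\St(\Phi^{-1}(V),\Phi^{-1}(\mathcal{V}_0))\subseteq\Phi^{-1}(\St(V,\mathcal{V}_0))$) are all verified correctly, the reduction of the problem to the compact metric space $\Delta$ is sound, and your construction of the subordinate uniform partition of unity from the pseudometrization lemma rather than from Lemma~\ref{lemma:uniform.covers} avoids any circularity with Isbell's own order of development (the chaining lemma needs only axiom (2) of a uniformity, with no openness or finiteness input). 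Two cosmetic points: the radius "one quarter of a Lebesgue number" is only adequate under the ball-centred form of the Lebesgue-number lemma (every $B(y,\lambda)$ lies in a member), since $\St(B(y,r),\{B(z,r)\}_z)\subseteq B(y,3r)$; with the diameter form one should shrink to $\lambda/6$ or so. And the case distinction at the start is needed exactly so that $d(x,X\setminus U_i)$ is meaningful, which you handle. The one genuine black box, the uniformly continuous pseudometric adapted to a normal sequence, is standard and is appropriately flagged rather than hidden. What your approach buys is a proof whose only nontrivial input is the metrization lemma, plus an instructive explanation (the erosion remark at the end) of why a star-refinement cannot be read off from $\mathcal{U}$ member-by-member; what it costs is that it is not really shorter than Isbell's two-step argument once the partition of unity is unwound.
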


Among all finite coverings of a uniform space, the uniform ones are exactly those which admit a subordinate uniform partition of the unity. To make this precise, let us agree on some additional notation: if $X$ is a topological space, then we define $\spt (f) \defeq \overline{\{ x \in X \mid f(x) \ne 0 \}}$ for every continuous function $f \colon X \to \mathbb{R}$.

\begin{lem}[\cite{Isbell}]\label{lemma:uniform.covers} Let $X$ be a uniform space. A finite covering $\mathcal{U}$ of $X$ is uniform if and only if there exists a family of uniformly continuous functions $f_{U} \colon X \to [0,1]$ ($U \in \mathcal{U}$) such that \begin{enumerate}
	\item[$(1)$]	$\spt (f_{U}) \subseteq U$ for every $U \in \mathcal{U}$,
	\item[$(2)$]	$\sum_{U \in \mathcal{U}} f_{U}(x) = 1$ for all $x \in X$.
\end{enumerate} \end{lem}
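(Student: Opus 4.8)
The plan is to prove the two implications separately, the converse being the quick one. So suppose first that uniformly continuous functions $f_{U}\colon X\to[0,1]$ ($U\in\mathcal{U}$) are given with $\spt(f_{U})\subseteq U$ and $\sum_{U\in\mathcal{U}}f_{U}\equiv 1$, and set $O_{U}\defeq\{x\in X\mid f_{U}(x)>0\}$. Then $O_{U}\subseteq U$, while the partition-of-unity property forces $\bigcup_{U\in\mathcal{U}}O_{U}=X$, so $\{O_{U}\mid U\in\mathcal{U}\}$ is a covering refining $\mathcal{U}$. To see that it is \emph{uniform} I would mimic the proof of Lemma~\ref{lemma:uniform.tiles}: the map $\Phi\colon X\to\Delta$, $x\mapsto(f_{U}(x))_{U\in\mathcal{U}}$, into the compact metrizable simplex $\Delta\defeq\{t\in[0,1]^{\mathcal{U}}\mid\sum_{U}t_{U}=1\}$ is uniformly continuous as a finite tuple of uniformly continuous functions, and $\{\,\{t\in\Delta\mid t_{U}>0\}\mid U\in\mathcal{U}\,\}$ is a finite open covering of the compact space $\Delta$, hence uniform. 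Its $\Phi$-preimage is exactly $\{O_{U}\mid U\in\mathcal{U}\}$, which is therefore a uniform covering of $X$; since it refines $\mathcal{U}$, axiom~$(1)$ in the definition of a uniformity yields that $\mathcal{U}$ itself is uniform.

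For the forward direction let $\mathcal{U}$ be a finite uniform covering. First I would produce, by starting from $\mathcal{U}_{0}\defeq\mathcal{U}$ and iterating Lemma~\ref{lemma:uniform.star.refinements}, a chain of \emph{finite} uniform coverings $\mathcal{U}_{0},\mathcal{U}_{1},\mathcal{U}_{2},\dots$ with $\mathcal{U}_{n}\preceq^{\ast}\mathcal{U}_{n+1}$ for every $n$, i.e.\ each $\mathcal{U}_{n+1}$ star-refines $\mathcal{U}_{n}$. The heart of the argument is then the Alexandroff--Urysohn metrization lemma for uniform spaces (\cite{Isbell}): such a star-refinement chain yields a pseudometric $d$ on $X$ with $\bigcup_{U\in\mathcal{U}_{n}}U\times U\subseteq\{(x,y)\mid d(x,y)<2^{-n}\}\subseteq\bigcup_{U\in\mathcal{U}_{n-1}}U\times U$ for each $n$. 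The left-hand inclusions make $d\colon X\times X\to\mathbb{R}$ uniformly continuous (points lying in a common member of a fine enough $\mathcal{U}_{n}$ have $d$-distance $<2^{-n}$), and, after replacing $d$ by $\min(d,1)$, I may assume $d\leq 1$. The right-hand inclusions, combined with the fact that $\mathcal{U}_{1}$ star-refines $\mathcal{U}_{0}=\mathcal{U}$, show that for a suitable $r>0$ the ball covering $\{B_{d}(x,2r)\mid x\in X\}$ refines $\mathcal{U}$, where $B_{d}(x,\rho)\defeq\{y\mid d(x,y)<\rho\}$.

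With such a $d$ and $r$ in hand I would define, for each $U\in\mathcal{U}$, the function $g_{U}(x)\defeq\max\{0,\,d(x,X\setminus U)-r\}$, where $d(x,X\setminus U)\defeq\inf\{d(x,y)\mid y\notin U\}$ (with the convention $d(x,\emptyset)\defeq 1$ to cover the case $U=X$). Each $g_{U}$ is uniformly continuous, being a truncation of the $d$-Lipschitz function $d(\cdot,X\setminus U)$ and $d$ being uniformly continuous. The subtraction of $r$ is exactly what secures support containment: $\{g_{U}>0\}=\{x\mid d(x,X\setminus U)>r\}$ has closure inside the closed set $\{x\mid d(x,X\setminus U)\geq r\}$, every point of which lies in $U$, so $\spt(g_{U})\subseteq U$. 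Moreover, since the $2r$-balls refine $\mathcal{U}$, every $x$ lies in some $U$ with $B_{d}(x,2r)\subseteq U$, whence $d(x,X\setminus U)\geq 2r$ and $g_{U}(x)\geq r$; thus $s\defeq\sum_{U\in\mathcal{U}}g_{U}$ satisfies $r\leq s\leq|\mathcal{U}|$ everywhere. Setting $f_{U}\defeq g_{U}/s$ then finishes the construction: because $s$ is bounded away from $0$ and $t\mapsto 1/t$ is Lipschitz on $[r,|\mathcal{U}|]$, each $f_{U}$ is uniformly continuous; the $f_{U}$ take values in $[0,1]$, satisfy $\sum_{U}f_{U}\equiv 1$, and $\spt(f_{U})=\spt(g_{U})\subseteq U$.

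The main obstacle is the combination of the metrization step with the requirement $\spt(f_{U})\subseteq U$ rather than the weaker $\{f_{U}>0\}\subseteq U$. The naive choice $g_{U}=d(\cdot,X\setminus U)$ already gives uniformly continuous functions summing to a positive function, but its support can reach the boundary of $U$ and spill outside; it is precisely the extra layers of star-refinement — encoded in the slack between the two inclusions of the metrization lemma, and consumed by the shift $-r$ — that repair this and produce genuine subordinate supports.
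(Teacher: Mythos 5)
The paper does not prove this lemma at all -- it is quoted from Isbell's book as a known fact -- so there is no internal proof to compare against; what can be assessed is whether your argument is sound, and it is. Your converse direction is exactly the device of Lemma~\ref{lemma:uniform.tiles} run in reverse: the map $\Phi\colon X\to\Delta$ is uniformly continuous, the sets $\{t\in\Delta\mid t_{U}>0\}$ form a finite open (hence uniform) covering of the compact Hausdorff space $\Delta$, and pulling it back gives a uniform covering $\{O_{U}\}$ with $\mathcal{U}\preceq\{O_{U}\}$, so axiom (1) of a uniformity applies. Your forward direction is the standard Isbell/Alexandroff--Urysohn route, and the two places where such arguments usually go wrong are handled correctly: the pseudometric produced by the chaining lemma is uniformly continuous because of the left-hand inclusions (so $d(\cdot,X\setminus U)$ is uniformly continuous even though $d$ need not generate the given uniformity), and the support condition $\spt(f_{U})\subseteq U$ -- which fails for the naive choice $d(\cdot,X\setminus U)$ -- is secured by the shift by $r$, since $\overline{\{d(\cdot,X\setminus U)>r\}}\subseteq\{d(\cdot,X\setminus U)\geq r\}\subseteq U$ (the last inclusion holding even for a pseudometric, as any $x\notin U$ has $d(x,X\setminus U)=0$). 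The lower bound $s\geq r$ coming from the fact that the $2r$-balls refine $\mathcal{U}$ (via $B_{d}(x,2r)\subseteq\St(x,\mathcal{U}_{1})\subseteq U$ for some $U$, using the star-refinement $\mathcal{U}\preceq^{\ast}\mathcal{U}_{1}$) is exactly what makes the normalization $f_{U}=g_{U}/s$ uniformly continuous. The only imprecision is the bookkeeping of indices in the sandwich inclusions of the metrization lemma, which you sidestep by allowing "a suitable $r>0$"; that is harmless. The proof is complete and correct.
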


It is well known that any topological group may be considered as a uniform space. In order to explain this and to agree on some additional notation, let $G$ be an arbitrary topological group. We denote by $\mathcal{U}(G)$ the filter of all neighborhoods of the neutral element in $G$. We define $G_{r}$ to be the uniform space obtained by endowing $G$ with the \emph{right uniformity}, i.e., \begin{displaymath}
	\{ \mathcal{U} \subseteq \mathcal{P}(G) \mid \exists U \in \mathcal{U}(G) \colon \, \mathcal{U} \preceq \{ Ux \mid x \in G \} \} ,
\end{displaymath} and we denote by $G_{l}$ the uniform space consisting of $G$ along with the \emph{left uniformity}, i.e., \begin{displaymath}
	\{ \mathcal{U} \subseteq \mathcal{P}(G) \mid \exists U \in \mathcal{U}(G) \colon \, \mathcal{U} \preceq \{ xU \mid x \in G \} \} .
\end{displaymath} It is easy to see that the topology generated by each of these uniformities is precisely the original topology of $G$. Let us fix the following terminology with regard to an identity neighborhood $U$ in $G$: a set $\mathcal{U} \subseteq \mathcal{P}(G)$ is called a \emph{$U$-uniform covering of $G_{r}$} if $\mathcal{U} \preceq \{ Ux \mid x \in G \}$, and $\mathcal{U}$ is called a \emph{$U$-uniform covering of $G_{l}$} if $\mathcal{U} \preceq \{ xU \mid x \in G \}$. Clearly, a set $\mathcal{U} \subseteq \mathcal{P}(G)$ is a uniform covering of $G_{r}$ ($G_{l}$, respectively) if and only if $\mathcal{U}$ is a $U$-uniform covering of $G_{r}$ ($G_{l}$, respectively) for some identity neighborhood $U$ in $G$. Besides, recall that any continuous homomorphism from $G$ into another topological group $H$ constitutes a uniformly continuous map both from $G_{r}$ to $H_{r}$ and from $G_{l}$ to $H_{l}$. In the following, we shall mainly be concerned with the right uniformity -- except for Section~\ref{section:ramsey.theory}. However, note that $\iota \colon G_{r} \to G_{l}, \, x \mapsto x^{-1}$ is an isomorphism of uniform spaces. Hence, any statement about the right uniformity can be translated into an equivalent statement about the left uniformity in straight-forward manner. For more details concerning uniform structures on topological groups, we refer to \cite{roelcke}.

Furthermore, we need to recall the concept of uniform convergence. To this end, let $X,Y$ be uniform spaces. Concerning a function $f \in {\rm UC}(X,Y)$ and a uniform covering $\mathcal{U}$ of $Y$, we define $[f,\mathcal{U}] \defeq \{ g \in {\rm UC}(X,Y) \mid \forall x \in X \colon \, g(x) \in \St (f(x),\mathcal{U}) \}$. It is straightforward to check that $\{ \mathcal{V} \subseteq \mathcal{P}({\rm UC}(X,Y)) \mid \exists \mathcal{U} \textnormal{ uniform covering of } Y \colon \, \mathcal{V} \preceq \{ [f,\mathcal{U}] \mid f \in {\rm UC}(X,Y) \} \}$ constitutes uniformity on ${\rm UC}(X,Y)$, which we refer to as the \emph{uniformity of uniform convergence}. The induced topology on ${\rm UC}(X,Y)$ is called the \emph{topology of uniform convergence}. It is now straight-forward to check that $\Aut (X)$ endowed with the topology of uniform convergence constitutes a topological group, and that the corresponding right uniformity of this topological group is just the uniformity of uniform convergence on $\Aut (X)$. Note that if $G$ is a topological group, then the injective group homomorphism $\lambda_{G} \colon G \to \Aut (G_{r})$ given by $\lambda_{G}(g) (x) \defeq gx$ for $g,x \in G$ is continuous.

Suppose that $X$ is a compact Hausdorff space and $G$ is a topological group. Consider a \emph{$G$-flow} on $X$, i.e., a continuous homomorphism $\alpha \colon G \to \Aut (X)$. A subset $Y \subseteq X$ is called \emph{$\alpha$-invariant} if $\alpha(g)(Y) = Y$ for every $g \in G$. We say that $\alpha$ is \emph{minimal} if $\emptyset$ and $X$ are the only closed, $\alpha$-invariant subsets of $X$. By a \emph{subflow} of $\alpha$ we mean any flow of the form $\beta \colon G \to \Aut (Y), \, g \mapsto \alpha (g)|_{Y}$ where $Y$ is a closed, $\alpha$-invariant subset of $X$.

\section{Means and amenability}\label{section:amenability}

In this section we want to recall some general basics concerning means on function spaces. For this purpose, we follow the presentation in \cite{AnalysisOnSemigroups}. Furthermore, we briefly discuss the concept of amenability for dynamical systems in general and for topological groups in particular. For a more elaborate study of amenable topological groups including interesting examples, we refer to \cite{paterson,paterson92,runde,brownc,GrigorchukDeLaHarpe}.

For a start, we clarify some notation and recall some basic terminology regarding function spaces. Let $X$ be a set. For convenience, we shall denote by $\mathcal{F}(X)$ the set of all finite subsets of $X$. Additionally, we abbreviate $\mathcal{F}_{+}(X) \defeq \mathcal{F}(X)\setminus \{ \emptyset \}$. Furthermore, we consider the set \begin{displaymath}
	\Delta (X) \defeq \left\{ \delta \in [0,1]^{X} \left| \, \delta^{-1}((0,1]) \text{ finite, } \sum_{x \in X} \delta (x) = 1 \right\} \right.
\end{displaymath} of formal convex combinations over $X$. For every $\delta \in \Delta (X)$, we define $\spt (\delta) \defeq \delta^{-1}((0,1])$. As usual, for $x \in X$, define $\delta_{x} \in \Delta (X)$ by \begin{displaymath}
	\delta_{x}(y) \defeq \begin{cases}
		1 & \text{if } x=y ,\\
		0 & \text{otherwise}
	\end{cases} \qquad (y \in X) .
\end{displaymath} If $F$ is a finite non-empty subset of $X$, then we put $\delta_{F} \defeq \sum_{x \in F} \delta_{x} \in \Delta (X)$. Let $H$ be a linear subspace of $\ell^{\infty}(X)$. A \emph{mean} on $H$ is a linear map $\mu \colon H \to \mathbb{R}$ such that \begin{displaymath}
	\inf \{ f(x) \mid x \in X \} \leq \mu(f) \leq \sup \{ f(x) \mid x \in X \}
\end{displaymath} for all $f \in H$. The set of all means on $H$ is denoted by $M(H)$. For each $x \in X$, we obtain a mean on $H$ by $\nu_{x} \colon H \to \mathbb{R}, \, f \mapsto f(x)$. More generally, $\nu_{\delta} \defeq \sum_{x \in X} \delta(x)\nu_{x} \colon H \to \mathbb{R}$ is a mean on $H$ for every $\delta \in \Delta (X)$. In particular, \begin{displaymath}
	\nu_{F} \defeq \nu_{\delta_{F}} = \frac{1}{\vert F \vert}\sum_{x \in F} \nu_{x} \colon H \to \mathbb{R}
\end{displaymath} is a mean on $H$ for any non-empty finite subset $F \subseteq X$. Evidently, $\nu_{x} = \nu_{\delta_{x}}$ for every $x \in X$, and $\{ \nu_{\delta} \mid \delta \in \Delta (X) \}$ is nothing but the convex envelope of $\{ \nu_{x} \mid x \in X \}$ in $M(H)$.

\begin{thm}[\cite{AnalysisOnSemigroups}]\label{theorem:general.means} Let $X$ be a set and let $H$ be a linear subspace of $\ell^{\infty}(X)$ containing the constant functions. Then $M(H)$ is convex and weak-* compact. Furthermore, the convex subset $\{ \nu_{\delta} \mid \delta \in \Delta (X) \}$ is weak-* dense in $M(H)$. \end{thm}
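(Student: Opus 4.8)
The plan is to verify the three assertions separately: convexity of $M(H)$, its weak-* compactness, and the weak-* density of $C \defeq \{ \nu_{\delta} \mid \delta \in \Delta (X) \}$. Convexity is immediate: if $\mu_{0},\mu_{1} \in M(H)$ and $t \in [0,1]$, then $t\mu_{0} + (1-t)\mu_{1}$ is again linear and still satisfies the sandwich inequality $\inf f \le \mu(f) \le \sup f$ for every $f \in H$, as this inequality is preserved under convex combinations.

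For compactness, I would first observe that $M(H) \subseteq H^{\ast}$ sits inside the closed unit ball of the dual. Indeed, since $H$ contains the constant functions, every mean $\mu$ satisfies $\mu(\mathbf{1}) = 1$, and the sandwich inequality yields $\lvert \mu(f) \rvert \le \lVert f \rVert_{\infty}$ for all $f$, so $\lVert \mu \rVert \le 1$. By the Banach--Alaoglu theorem this ball is weak-* compact, so it suffices to check that $M(H)$ is weak-* closed within it. But $M(H)$ is cut out by the linearity conditions $\mu(f+g) = \mu(f)+\mu(g)$ and $\mu(cf) = c\mu(f)$ together with the inequalities $\mu(f) \le \sup f$ and $\inf f \le \mu(f)$ for each $f \in H$; each of these involves only the weak-* continuous evaluation maps $\mu \mapsto \mu(f)$ and is therefore a weak-* closed condition. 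Hence $M(H)$ is a weak-* closed subset of a weak-* compact set, and thus itself weak-* compact.

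The density assertion is the crux. The set $C$ is convex, being the convex envelope of $\{ \nu_{x} \mid x \in X \}$ in $M(H)$, so its weak-* closure $\overline{C}$ is a weak-* closed convex subset of the compact set $M(H)$. I would argue by contradiction: if some $\mu_{0} \in M(H)$ were to lie outside $\overline{C}$, then, since the topological dual of $H^{\ast}$ endowed with the weak-* topology is canonically $H$ itself, the Hahn--Banach separation theorem furnishes an $f \in H$ and a scalar $\alpha$ with $\nu_{\delta}(f) \le \alpha < \mu_{0}(f)$ for all $\delta \in \Delta (X)$. Specialising to $\delta = \delta_{x}$ yields $f(x) = \nu_{x}(f) \le \alpha$ for every $x \in X$, whence $\sup_{x} f(x) \le \alpha$. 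The mean property of $\mu_{0}$ then forces $\mu_{0}(f) \le \sup_{x} f(x) \le \alpha$, contradicting $\alpha < \mu_{0}(f)$. Therefore $\overline{C} = M(H)$.

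The only genuine subtlety is this last step, and within it the standard but essential identification of the weak-* continuous linear functionals on $H^{\ast}$ with the elements of $H$; this is precisely what licenses the separating functional to be an actual member of $H$, so that the defining inequality of a mean can be brought to bear. Everything else is routine bookkeeping.
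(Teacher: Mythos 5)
Your proof is correct. The paper gives no proof of this statement itself, citing it to Berglund--Junghenn--Milnes; your argument (convexity by inspection, Banach--Alaoglu together with weak-* closedness for compactness, and Hahn--Banach separation combined with the identification of the weak-* continuous functionals on $H^{\ast}$ with elements of $H$ for density) is the standard one, and the key step of specialising the separating inequality to the point evaluations $\nu_{x}$ to contradict the mean property is carried out correctly.
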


Furthermore, let us point out the following modification of Theorem~\ref{theorem:general.means}.

\begin{lem}\label{lemma:dense.set.of.means} Suppose $X$ to be a topological space such that every open non-empty subset of $X$ is infinite. Let $H$ be a linear subspace of $C_{b}(X)$ containing the constant functions. Then $\{ \nu_{F} \mid F \in \mathcal{F}_{+}(X)\}$ is weak-* dense in $M(H)$. \end{lem}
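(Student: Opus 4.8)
The plan is to reduce to Theorem~\ref{theorem:general.means} and then to pass from the formal convex combinations $\nu_{\delta}$ to the uniform averages $\nu_{F}$. Since $\{\nu_{F} \mid F \in \mathcal{F}_{+}(X)\} \subseteq \{\nu_{\delta} \mid \delta \in \Delta(X)\}$ and the latter is weak-* dense in $M(H)$ by Theorem~\ref{theorem:general.means}, it suffices to show that every $\nu_{\delta}$ lies in the weak-* closure of $\{\nu_{F} \mid F \in \mathcal{F}_{+}(X)\}$; the closure of the latter then contains the closure of $\{\nu_{\delta} \mid \delta \in \Delta(X)\}$, which is all of $M(H)$. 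So I fix $\delta \in \Delta(X)$ with support $\{x_{1},\dots,x_{k}\}$ and values $c_{i} \defeq \delta(x_{i})$, together with a basic weak-* neighbourhood of $\nu_{\delta}$ determined by finitely many functions $f_{1},\dots,f_{n} \in H$ and some $\epsilon>0$, and I look for $F \in \mathcal{F}_{+}(X)$ with $\lvert \nu_{F}(f_{j})-\nu_{\delta}(f_{j})\rvert<\epsilon$ for all $j$.

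The construction proceeds in two steps. First I approximate the real weights $c_{i}$ by rationals over a common denominator: for $N$ sufficiently large I choose non-negative integers $m_{1},\dots,m_{k}$ with $\sum_{i} m_{i} = N$ and $\sum_{i} \lvert m_{i}/N - c_{i}\rvert$ arbitrarily small, so that $\bigl\lvert \tfrac{1}{N}\sum_{i} m_{i} f_{j}(x_{i}) - \nu_{\delta}(f_{j})\bigr\rvert$ is controlled by $\max_{j} \lVert f_{j}\rVert_{\infty}$ times this quantity and hence can be made less than $\epsilon/2$ for all $j$. Second I realise the rationally weighted average $\tfrac{1}{N}\sum_{i} m_{i} \nu_{x_{i}}$ as a genuine uniform average $\nu_{F}$ over $N$ distinct points. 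This is where the hypotheses enter: by continuity of $f_{1},\dots,f_{n}$ I choose, for each $i$, an open neighbourhood $V_{i} \ni x_{i}$ on which every $f_{j}$ oscillates by less than $\epsilon/2$; since each non-empty open subset of $X$ is infinite, I may select $m_{i}$ distinct points inside $V_{i}$, and proceeding greedily over $i$ I can keep these selections pairwise disjoint, because at each stage $V_{i}$ minus the finitely many previously chosen points is still infinite. Setting $F$ to be the disjoint union of these selections yields a set of exactly $N$ distinct points.

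The final estimate is then routine: for $y$ in the block chosen inside $V_{i}$ one has $\lvert f_{j}(y)-f_{j}(x_{i})\rvert<\epsilon/2$, so averaging over $F$ gives $\bigl\lvert \nu_{F}(f_{j})-\tfrac{1}{N}\sum_{i} m_{i} f_{j}(x_{i})\bigr\rvert<\epsilon/2$, and combining with the first step via the triangle inequality produces $\lvert\nu_{F}(f_{j})-\nu_{\delta}(f_{j})\rvert<\epsilon$ for every $j$. The only genuinely delicate point is the second step, namely turning the multiplicities $m_{i}$ --- which cannot be encoded by repeating points within a set --- into $m_{i}$ distinct points without changing the values of the test functions appreciably. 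The infiniteness hypothesis guarantees that there is enough room to do this, while the continuity and boundedness of the elements of $H$ guarantee that both the rounding in the first step and the spreading in the second step perturb the relevant averages only slightly.
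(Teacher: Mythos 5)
Your proposal is correct and follows essentially the same route as the paper's proof: approximate the weights of a formal convex combination by rationals with a common denominator $N$, then use the hypothesis that non-empty open sets are infinite to replace each atom $x_{i}$ by $m_{i}$ distinct nearby points on which the finitely many test functions vary little, and conclude by the triangle inequality. The only cosmetic difference is that you first reduce to approximating the $\nu_{\delta}$ via a closure-of-closure argument, whereas the paper folds that reduction into a single three-term estimate; the substance is identical.
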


\begin{proof} Let $\mu \in M(H)$, $H_{0} \in \mathcal{F}(H)$ and $\epsilon > 0$. We define $s \defeq \sup_{f \in H_{0}} \Vert f \Vert_{\infty} + 1$ and $\theta \defeq \frac{\epsilon}{3s}$. According to Theorem~\ref{theorem:general.means}, there exist $F \in \mathcal{F}_{+}(X)$ and $\alpha \colon F \to (0,1]$ such that $\sum_{x \in F} \alpha(x) = 1$ and $|\mu(f) - \sum_{x \in F} \alpha(x)f(x)| \leq \frac{\epsilon}{3}$ for all $f \in H_{0}$. It is well-known that $\{ \beta \in ((0,1] \cap \mathbb{Q})^{F} \mid \sum_{x \in F} \beta (x) = 1 \}$ is dense in $\{ \beta \in (0,1]^{F} \mid \sum_{x \in F} \beta (x) = 1 \}$. Hence, there exists $\beta \colon F \to (0,1] \cap \mathbb{Q}$ such that $\sum_{x \in F} \beta(x) = 1$ and $\sum_{x \in F} |\alpha(x) - \beta(x)| \leq \theta$. The latter assertion readily implies that \begin{displaymath}
		\left|\sum_{x \in F} \alpha (x) f(x) - \sum_{x \in F} \beta (x) f(x)\right| \leq \sum_{x \in F} |\alpha (x) - \beta(x)| \Vert f \Vert_{\infty} \leq \frac{\epsilon}{3}
	\end{displaymath} for each $f \in H_{0}$. There exist $n \in \mathbb{N}\setminus \{ 0 \}$ and $\gamma \colon F \to \{ 1,\ldots ,n \}$ such that $\beta(x) = \frac{\gamma(x)}{n}$ for all $x \in F$. Now, if $x \in F$, then $V(x) \defeq \bigcap \{ f^{-1}((f(x)-\frac{\epsilon}{3},f(x)+\frac{\epsilon}{3})) \mid f \in H_{0} \}$ is an open non-empty subset of $X$. Since every open non-empty subset of $X$ is infinite, there is $\Phi \colon F \to \mathcal{F}_{+}(X)$ such that \begin{enumerate}
	\item[(1)]	$\Phi(x) \subseteq V(x)$ for every $x \in F$,
	\item[(2)]	$|\Phi(x)| = \gamma(x)$ for every $x \in F$,
	\item[(3)]	$\Phi(x) \cap \Phi(y) = \emptyset$ for any two distinct $x,y \in F$.
\end{enumerate} Let $E \defeq \bigcup \{ \Phi(x) \mid x \in F \}$. We observe that $|E| = n$. For every $f \in H_{0}$, it follows that \begin{align*}
\left| \sum_{x \in F} \beta (x) f(x) - \frac{1}{|E|}\sum_{y \in E} f(y) \right| &= \frac{1}{n} \left| \sum_{x \in F} \gamma (x) f(x) - \sum_{x \in F}\sum_{y \in \Phi(x)} f(y) \right| \\
&\leq \frac{1}{n} \sum_{x \in F} \left| \gamma (x) f(x) - \sum_{y \in \Phi (x)} f(y) \right| \\
&\leq \frac{1}{n} \sum_{x \in F} \sum_{y \in \Phi (x)} |f(x) - f(y)| \\
&\leq \frac{1}{n} \sum_{x \in F} \frac{\epsilon \gamma (x)}{3} = \frac{\epsilon}{3}
\end{align*} and therefore \begin{align*}
\left| \mu(f) - \nu_{E}(f) \right| &\leq \left| m(f) - \sum_{x \in F} \alpha(x)f(x)\right| + \left|\sum_{x \in F} \alpha (x) f(x) - \sum_{x \in F} \beta (x) f(x)\right| \\
& \, \, \, \, \, \, + \left| \sum_{x \in F} \beta (x) f(x) - \frac{1}{|E|}\sum_{y \in E} f(y) \right| \\
&\leq \, \frac{\epsilon}{3} + \frac{\epsilon}{3} + \frac{\epsilon}{3} = \epsilon
\end{align*} This finishes the proof. \end{proof}

Recall that a topological space is \emph{perfect} if it does not contain any isolated points. Furthermore, a topological space $X$ is called \emph{homogeneous} if, for any two points $x,y \in X$, there exists a homeomorphism $g \colon X \to X$ such that $g(x) = y$. It is easy to see the following:

\begin{remark}\label{remark:isolated.points} Let $X$ be a topological space. The following statements hold. \begin{enumerate}
		\item[(1)]	If $X$ is homogeneous and not discrete, then $X$ is perfect.
		\item[(2)]	If $X$ is $T_{1}$ and perfect, then every open non-empty subset of $X$ is infinite.
	\end{enumerate} \end{remark}

Now we come to amenability. To this end, let $X$ be a uniform space. A \emph{mean} on $X$ is a mean on ${\rm UC}_{b}(X)$. We denote by $M(X)$ the set of all means on $X$. Consider a subgroup $G$ of $\Aut (X)$. We refer to the pair $(X,G)$ as a \emph{dynamical system}. An \emph{invariant mean} on $(X,G)$ is a mean $\mu$ on $X$ such that $\mu(f) = \mu(f \circ g)$ for all $f \in {\rm UC}_{b}(X)$ and $g \in G$. The set of invariant means on $(X,G)$ shall be denoted by $M(X,G)$. We call $(X,G)$ \emph{amenable} if $M(X,G) \ne \emptyset$. Given any topological group $G$ and some compact Hausdorff space $X$, we say that a $G$-flow $\alpha \colon G \to \Aut (X)$ on $X$ is \emph{amenable} if the dynamical system $(X,\alpha (G))$ is amenable.

As pointed out at the end of Section~\ref{section:uniform.spaces}, any topological group may be considered as a uniform space, wherefore the previous definition particularly applies to topological groups. To elaborate on this case, let $G$ be a topological group. An \emph{invariant mean} on $G$ is an invariant mean of the dynamical system $(G_{r},\lambda_{G}(G))$. Accordingly, we call $G$ \emph{amenable} if the dynamical system $(G_{r},\lambda_{G}(G))$ is amenable, i.e., there is a mean $\mu$ on ${\rm UC}_{b}(G_{r})$ such that $\mu(f) = \mu(f \circ \lambda_{G}(g))$ for all $f \in {\rm UC}_{b}(G_{r})$ and $g \in G$. It is well known that $G$ is amenable if and only if every $G$-flow is amenable. Moreover, let us recall the following well-known characterization of amenability for discrete groups.

\begin{thm}[\cite{folner}]\label{theorem:folner} A discrete group $G$ is amenable if and only if, for all $\theta \in [0,1)$ and $E \in \mathcal{F}(G)$, there is some $F \in \mathcal{F}_{+}(G)$ such that $|F \cap gF| \geq \theta |F|$ for all $g \in E$. \end{thm}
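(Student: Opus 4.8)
The plan is to prove both implications, using that for a discrete group $G$ the singleton covering $\{\{x\} \mid x \in G\}$ is uniform, so ${\rm UC}_{b}(G_{r}) = \ell^{\infty}(G)$ and amenability of $G$ amounts to the existence of a mean $\mu$ on $\ell^{\infty}(G)$ with $\mu(f) = \mu(f \circ \lambda_{G}(g))$ for all $f$ and all $g \in G$, where $(f \circ \lambda_{G}(g))(x) = f(gx)$.

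For the implication that the F\o lner condition forces amenability, I would fix for each pair $(E,\theta) \in \mathcal{F}(G) \times [0,1)$ a set $F_{E,\theta} \in \mathcal{F}_{+}(G)$ witnessing the condition, and consider the net of averaging means $\nu_{F_{E,\theta}}$, indexed by $\mathcal{F}(G) \times [0,1)$ ordered so that larger $E$ and larger $\theta$ come later. By the weak-* compactness of $M(\ell^{\infty}(G))$ from Theorem~\ref{theorem:general.means}, some subnet converges to a mean $\mu$. To check invariance, fix $g$ and $f$ and compute $\nu_{F}(f \circ \lambda_{G}(g)) - \nu_{F}(f) = \frac{1}{|F|}\bigl(\sum_{y \in gF} f(y) - \sum_{x \in F} f(x)\bigr)$; since only the terms over $F \triangle gF$ survive, this is bounded by $\frac{2(|F| - |F \cap gF|)}{|F|}\Vert f \Vert_{\infty} \leq 2(1-\theta)\Vert f \Vert_{\infty}$, which vanishes as $\theta \to 1$ along the net. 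Hence $\mu(f \circ \lambda_{G}(g)) = \mu(f)$, so $\mu$ is invariant.

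For the converse, the main work, I would first extract a Reiter-type condition: for every $E \in \mathcal{F}(G)$ and $\epsilon > 0$ there is $\delta \in \Delta(G)$ with $\sum_{g \in E} \Vert g\delta - \delta \Vert_{1} < \epsilon$, where $(g\delta)(x) \defeq \delta(g^{-1}x)$. By Theorem~\ref{theorem:general.means} there is a net $\delta_{i}$ with $\nu_{\delta_{i}} \to \mu$ weak-*, and invariance gives $\langle g\delta_{i} - \delta_{i}, f \rangle = \nu_{\delta_{i}}(f \circ \lambda_{G}(g)) - \nu_{\delta_{i}}(f) \to 0$ for every $f \in \ell^{\infty}(G)$ and $g \in E$. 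Thus $0$ lies in the weak closure of the convex set $\{ (g\delta - \delta)_{g \in E} \mid \delta \in \Delta(G) \} \subseteq \bigoplus_{g \in E} \ell^{1}(G)$; since weak and norm closures of a convex set coincide (Mazur, via Hahn--Banach separation, noting $(\ell^{1})^{\ast} = \ell^{\infty}$), $0$ lies in its norm closure, yielding the desired $\delta$. Then I would pass from Reiter to F\o lner by the layer-cake method: writing $F_{t} \defeq \{ x \mid \delta(x) > t \}$, one has $\delta = \int_{0}^{\infty} \mathbf{1}_{F_{t}}\, dt$ and $g\delta = \int_{0}^{\infty} \mathbf{1}_{gF_{t}}\, dt$, whence $\Vert g\delta - \delta \Vert_{1} = \int_{0}^{\infty} |gF_{t} \triangle F_{t}|\, dt$ and $\int_{0}^{\infty} |F_{t}|\, dt = 1$. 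Combining these gives $\int_{0}^{\infty} \bigl( \sum_{g \in E} |gF_{t} \triangle F_{t}| - \epsilon |F_{t}| \bigr)\, dt < 0$, so some level $t$ produces a nonempty $F \defeq F_{t}$ with $|gF \triangle F| < \epsilon|F|$ for all $g \in E$; using $|gF \triangle F| = 2|F| - 2|F \cap gF|$ and choosing $\epsilon \leq 2(1-\theta)$ yields $|F \cap gF| \geq \theta|F|$.

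The principal obstacle is this converse direction. The delicate points are the passage from weak to norm closure, where one must be careful that the relevant convex set lives in the predual $\ell^{1}(G)$ rather than in $\ell^{\infty}(G)^{\ast}$ so that Mazur's theorem applies, and the layer-cake bookkeeping needed to extract a single good level set $F_{t}$ from the integral estimate. The forward implication, by contrast, is a routine weak-* compactness argument once the translation defect is bounded by $2(1-\theta)\Vert f \Vert_{\infty}$.
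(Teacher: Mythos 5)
Your proposal is correct. Note, however, that the paper does not prove Theorem~\ref{theorem:folner} at all: it is stated as a citation to F\o lner's 1955 paper and used as a black box (in the proof of Theorem~\ref{theorem:amenable.groups} it supplies the discrete case), so there is no in-paper argument to compare against. What you have written is the standard Day--Namioka proof of the classical result: the easy direction by weak-* compactness of $M(\ell^{\infty}(G))$ with the translation defect bounded by $2(1-\theta)\Vert f\Vert_{\infty}$, and the hard direction by passing from an invariant mean to the Reiter property via Mazur's theorem in $\bigoplus_{g\in E}\ell^{1}(G)$ and then to F\o lner sets by the layer-cake decomposition $\delta=\int_{0}^{\infty}\mathbf{1}_{F_{t}}\,dt$. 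All the delicate points you flag are handled correctly: the convex set $\{(g\delta-\delta)_{g\in E}\mid\delta\in\Delta(G)\}$ does live in the predual, so weak closure equals norm closure; the identity $\Vert g\delta-\delta\Vert_{1}=\int_{0}^{\infty}\vert gF_{t}\triangle F_{t}\vert\,dt$ together with $\int_{0}^{\infty}\vert F_{t}\vert\,dt=1$ does produce a single good level set; and $\vert gF\triangle F\vert=2\vert F\vert-2\vert F\cap gF\vert$ converts the symmetric-difference bound into the stated intersection bound. The only cosmetic remark is that one should observe explicitly that for discrete $G$ the covering by singletons is uniform, so ${\rm UC}_{b}(G_{r})=\ell^{\infty}(G)$ and the paper's notion of amenability reduces to the classical one -- which you do at the outset.
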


For later use, we equip the set of means on a given topological group with a suitable semigroup structure by extending the group multiplication in the usual way. So, let $G$ be a topological group. It is easy to see that the function $f_{\nu} \colon G \to \mathbb{R}, \, g \mapsto \nu (f \circ \lambda_{G}(g))$ is member of ${\rm UC}_{b}(G_{r})$ whenever $f \in {\rm UC}_{b}(G_{r})$ and $\nu \in M(G_{r})$. Hence, we may define $(\mu \nu ) (f) \defeq \mu (f_{\nu})$ for any two $\mu, \nu \in M(G_{r})$. Note that this multiplication is associative and hence turns $M(G_{r})$ into a semigroup. Besides, we observe that $\Delta (G)$ also carries a natural semigroup structure given by the multiplication \begin{displaymath}
	(\alpha \beta )(g) \defeq \sum_{h \in G} \alpha (gh^{-1})\beta (h) \qquad (\alpha,\beta \in \Delta(X)) ,
\end{displaymath} and both $G \to \Delta (G), \, g \mapsto \delta_{g}$ and $\Delta (G) \to M(G_{r}), \, \delta \mapsto \nu_{\delta}$ are homomorphisms. Furthermore, note that $\spt (\alpha \beta) \subseteq (\spt \alpha)(\spt \beta)$ for any two elements $\alpha,\beta \in \Delta(G)$.

\section{Matchings in bipartite graphs}\label{section:matchings}

In this section we briefly recall basic notions and facts about matchings in bipartite graphs. This particularly includes Hall's marriage theorem (see Theorem~\ref{theorem:hall}). 

For a start, we clarify some terminology and notation. Let $\mathcal{B} = (X,Y,R)$ be a \emph{bipartite graph}, i.e., a triple consisting of two finite sets $X$ and $Y$ and a relation $R \subseteq X \times Y$. If $S \subseteq X$, then we define $N_{\mathcal{B}}(S) \defeq \{ y \in Y \mid \exists x \in S \colon (x,y) \in R \}$. A \emph{matching} in $\mathcal{B}$ is an injective map $\phi \colon D \to Y$ such that $D \subseteq X$ and $(x,\phi(x)) \in R$ for all $x \in D$. A matching $\phi$ in $\mathcal{B}$ is said to be \emph{perfect} if $\dom (\phi) = X$. Furthermore, we call \begin{displaymath}
	\mu (\mathcal{B}) \defeq \sup \{ |\dom \phi | \mid \phi \textnormal{ matching in } \mathcal{B} \}
\end{displaymath} the \emph{matching number} of $\mathcal{B}$. For later use we note the following simple observation.

\begin{remark}\label{remark:bijective.graph.homomorphisms} Let $\mathcal{B}_{0} = (X_{0},Y_{0},R_{0})$ and $\mathcal{B}_{1} = (X_{1},Y_{1},R_{1})$ be bipartite graphs. Suppose that there exist bijective maps $\phi \colon X_{0} \to X_{1}$ and $\psi \colon Y_{0} \to Y_{1}$ such that $(\phi(x),\psi(y)) \in R_{1}$ for all $(x,y) \in R_{0}$. Then $\mu (\mathcal{B}_{0}) \leq \mu (\mathcal{B}_{1})$. \end{remark}

We will need Hall's well-known matching theorem, which we restate for convenience.

\begin{thm}[\cite{Hall35}, \cite{Ore}]\label{theorem:hall} If $\mathcal{B} = (X,Y,R)$ is a bipartite graph, then \begin{displaymath}
	\nu (\mathcal{B}) = |X| - \sup \{ |S| - |N_{\mathcal{B}}(S)| \mid S \subseteq X \} .
\end{displaymath} \end{thm}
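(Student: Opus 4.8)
Write $d \defeq \sup \{ |S| - |N_{\mathcal{B}}(S)| \mid S \subseteq X \}$; taking $S = \emptyset$ shows $d \geq 0$, and the assertion is that the matching number $\mu(\mathcal{B})$ equals $|X| - d$. The plan is to prove the two inequalities separately. The bound $\mu(\mathcal{B}) \leq |X| - d$ is elementary and needs no marriage condition: if $\phi$ is any matching with domain $D$, then for each $S \subseteq X$ the restriction of $\phi$ maps $S \cap D$ injectively into $N_{\mathcal{B}}(S)$, whence $|S \cap D| \leq |N_{\mathcal{B}}(S)|$ and therefore $|S| - |N_{\mathcal{B}}(S)| \leq |S| - |S \cap D| = |S \setminus D| \leq |X| - |D|$. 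Taking the supremum over $S$ gives $d \leq |X| - |\dom \phi|$, and then the supremum over all matchings $\phi$ yields $\mu(\mathcal{B}) \leq |X| - d$.

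The reverse inequality is the substantive part, and I would obtain it by reducing to the classical form of the marriage theorem: in a bipartite graph satisfying $|N(S)| \geq |S|$ for all $S \subseteq X$, there is a matching saturating $X$. I would prove this auxiliary statement by induction on $|X|$, distinguishing two cases. If every non-empty proper subset $S \subsetneq X$ enjoys a strict surplus $|N(S)| \geq |S| + 1$, I match an arbitrary vertex $x$ to an arbitrary neighbour $y$, delete both, check that the surplus leaves Hall's condition intact on the smaller graph, and apply the inductive hypothesis. Otherwise some non-empty proper $S_{0}$ is tight, $|N(S_{0})| = |S_{0}|$; I then apply induction to the subgraph spanned by $(S_{0}, N_{\mathcal{B}}(S_{0}))$ and to the complementary graph on $(X \setminus S_{0}, Y \setminus N_{\mathcal{B}}(S_{0}))$, verifying Hall's condition on each piece, and glue the two resulting matchings.

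Granting the classical form, I would finish as follows. Adjoin $d$ fresh vertices $z_{1}, \ldots, z_{d}$ to $Y$, each joined to every vertex of $X$, obtaining a bipartite graph $\mathcal{B}'$. For every non-empty $S \subseteq X$ one has $|N_{\mathcal{B}'}(S)| = |N_{\mathcal{B}}(S)| + d \geq |S|$ by the definition of $d$, so $\mathcal{B}'$ satisfies Hall's condition and admits a matching $\phi'$ saturating $X$. At most $d$ vertices of $X$ are matched by $\phi'$ to the new vertices $z_{i}$, so restricting $\phi'$ to the remaining at least $|X| - d$ vertices of $X$ produces a matching in $\mathcal{B}$ of size at least $|X| - d$; hence $\mu(\mathcal{B}) \geq |X| - d$. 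Combining the two inequalities gives the theorem.

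I expect the main obstacle to be the inductive proof of the classical marriage theorem, and within it the tight-set case, where one must carefully verify that Hall's condition is inherited by both the restricted subgraph on $S_{0}$ and the quotient graph on $X \setminus S_{0}$, so that the two partial matchings are compatible and can be combined into a single matching saturating $X$.
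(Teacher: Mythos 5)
Your proof is correct. Note, however, that the paper does not prove this statement at all: it is quoted as a classical result with citations to Hall and Ore, so there is no proof in the paper to compare against. What you give is the standard derivation of the defect (K\"onig--Ore) form of the marriage theorem: the easy inequality $\mu(\mathcal{B}) \leq |X| - d$ by counting how any matching meets a deficient set, and the reverse inequality by adjoining $d$ dummy vertices adjacent to all of $X$, invoking the classical saturation version (itself proved by the usual induction with the surplus/tight-set dichotomy), and discarding the at most $d$ edges that use dummy vertices. All steps check out against the paper's definition of a matching as a partial injective map $\phi \colon D \to Y$ with $D \subseteq X$ and $(x,\phi(x)) \in R$, including the degenerate cases $S = \emptyset$ and $R = \emptyset$. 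One cosmetic point: the displayed formula writes $\nu(\mathcal{B})$ where the paper only ever defines $\mu(\mathcal{B})$ as the matching number; you were right to read this as a typo and work with $\mu(\mathcal{B})$ throughout.
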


\begin{cor}\label{corollary:hall} A bipartite graph $\mathcal{B} = (X,Y,R)$ admits a perfect matching if and only if $|S| \leq |N_{\mathcal{B}}(S)|$ for every subset $S \subseteq X$. \end{cor}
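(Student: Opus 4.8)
The plan is to read off the corollary directly from the deficiency formula recorded in Theorem~\ref{theorem:hall}. The first step is to reformulate the existence of a perfect matching in terms of the matching number $\mu(\mathcal{B})$. Since every matching $\phi$ is an injective map from a subset of $X$ into $Y$, one has $|\dom \phi| \leq |X|$, whence $\mu(\mathcal{B}) \leq |X|$; and because $X$ and $Y$ are finite, there are only finitely many matchings, so the supremum defining $\mu(\mathcal{B})$ is attained by some matching. A perfect matching is precisely a matching $\phi$ with $|\dom \phi| = |X|$, so I would observe that $\mathcal{B}$ admits a perfect matching if and only if $\mu(\mathcal{B}) = |X|$.

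Next I would invoke Theorem~\ref{theorem:hall}, which expresses the matching number as $\mu(\mathcal{B}) = |X| - \sup \{\, |S| - |N_{\mathcal{B}}(S)| \mid S \subseteq X \,\}$. Combining this with the previous reformulation, $\mathcal{B}$ admits a perfect matching if and only if $\sup \{\, |S| - |N_{\mathcal{B}}(S)| \mid S \subseteq X \,\} = 0$.

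The final step is to determine when this supremum vanishes. Taking $S = \emptyset$ gives $|S| - |N_{\mathcal{B}}(S)| = 0$, so the supremum is always nonnegative; hence it equals $0$ if and only if $|S| - |N_{\mathcal{B}}(S)| \leq 0$ for every $S \subseteq X$, that is, $|S| \leq |N_{\mathcal{B}}(S)|$ for all $S \subseteq X$. This is exactly Hall's condition, which completes the equivalence.

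There is no substantial obstacle here, since all the content is packaged into Theorem~\ref{theorem:hall} and the corollary is merely a rewriting of the deficiency formula. The only point deserving a word of care is the attainment of the supremum defining $\mu(\mathcal{B})$, so that the equality $\mu(\mathcal{B}) = |X|$ genuinely yields a matching whose domain is all of $X$; this is immediate from the finiteness of $X$ and $Y$.
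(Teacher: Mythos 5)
Your proof is correct and is exactly the intended derivation: the paper states this corollary as an immediate consequence of Theorem~\ref{theorem:hall} (the deficiency formula) without further argument, and your rewriting of ``perfect matching exists'' as $\mu(\mathcal{B}) = |X|$, together with the observation that the supremum is nonnegative because of $S = \emptyset$ and is attained by finiteness, is precisely what that omitted argument amounts to. Nothing is missing.
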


In what follows, we shall have a closer look at bipartite graphs arising from uniform coverings of uniform spaces. For this purpose, we need to introduce some additional notation. Consider an arbitrary set $X$, finite subsets $E,F \subseteq X$, and a covering $\mathcal{U}$ of $X$. Then we define the bipartite graph \begin{displaymath}
		\mathcal{B} (E,F,\mathcal{U}) \defeq (E,F,R(E,F,\mathcal{U}))
\end{displaymath} with the relation given as follows: \begin{displaymath}
	R(E,F,\mathcal{U}) \defeq \{ (x,y) \in E \times F \mid y \in \St (x,\mathcal{U}) \} = \{ (x,y) \in E \times F \mid \exists U \in \mathcal{U} \colon \, \{ x,y \} \subseteq U \}.
\end{displaymath} Furthermore, we define $\mu (E,F,\mathcal{U}) \defeq \mu (\mathcal{B}(E,F,\mathcal{U}))$. Evidently, the following holds.

\begin{remark}\label{remark:estimate.for.matching.number} Let $\mathcal{U}$ be a covering of a set $X$. If $E,F \in \mathcal{F}(X)$, then $E \cap F \leq \mu (E,F,\mathcal{U})$. \end{remark}

The subsequent observations will prove useful in Section~\ref{section:matchings.in.dynamical.systems}.

\begin{lem}\label{lemma:composition.of.matchings} Let $\mathcal{U}$ be a covering of a set $X$ and let $F_{0},F_{1},F_{2} \in \mathcal{F}(X)$. Then \begin{displaymath}
	\mu (F_{0},F_{2},\mathcal{U}^{\ast}) \geq \mu (F_{0},F_{1},\mathcal{U}) + \mu (F_{1},F_{2},\mathcal{U}) - |F_{1}| .
\end{displaymath} \end{lem}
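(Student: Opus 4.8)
The plan is to produce a matching in $\mathcal{B}(F_{0},F_{2},\mathcal{U}^{\ast})$ by \emph{composing} an optimal matching from $F_{0}$ to $F_{1}$ with an optimal matching from $F_{1}$ to $F_{2}$. Since $F_{0},F_{1},F_{2}$ are finite, the suprema defining the three matching numbers are attained, so I may fix a matching $\phi$ in $\mathcal{B}(F_{0},F_{1},\mathcal{U})$ with $|\dom \phi| = \mu(F_{0},F_{1},\mathcal{U})$ and a matching $\psi$ in $\mathcal{B}(F_{1},F_{2},\mathcal{U})$ with $|\dom \psi| = \mu(F_{1},F_{2},\mathcal{U})$. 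I will restrict the composite $\psi \circ \phi$ to the set $D \defeq \{ x \in \dom \phi \mid \phi(x) \in \dom \psi \}$ and show that $\psi \circ \phi|_{D}$ is a matching in $\mathcal{B}(F_{0},F_{2},\mathcal{U}^{\ast})$ whose domain $D$ is large.

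The first key step -- and the place where the star operation enters -- is the relational statement that $R(F_{0},F_{1},\mathcal{U})$ and $R(F_{1},F_{2},\mathcal{U})$ compose into $R(F_{0},F_{2},\mathcal{U}^{\ast})$. Concretely, suppose $(x,y) \in R(F_{0},F_{1},\mathcal{U})$ and $(y,z) \in R(F_{1},F_{2},\mathcal{U})$, so that $\{ x,y \} \subseteq U_{1}$ and $\{ y,z \} \subseteq U_{2}$ for some $U_{1},U_{2} \in \mathcal{U}$. Then $y \in U_{1} \cap U_{2}$, whence $U_{2} \subseteq \St(U_{1},\mathcal{U})$; together with $x \in U_{1} \subseteq \St(U_{1},\mathcal{U})$ this yields $\{ x,z \} \subseteq \St(U_{1},\mathcal{U}) \in \mathcal{U}^{\ast}$, i.e. $(x,z) \in R(F_{0},F_{2},\mathcal{U}^{\ast})$. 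Applying this with $y = \phi(x)$ and $z = \psi(\phi(x))$ shows that, for every $x \in D$, the pair $(x,\psi(\phi(x)))$ lies in $R(F_{0},F_{2},\mathcal{U}^{\ast})$; moreover $\psi \circ \phi|_{D}$ is injective as a composite of injections (on $D$ one has $\phi(x) \in \dom \psi$, where $\psi$ is injective). Hence $\psi \circ \phi|_{D}$ is indeed a matching in $\mathcal{B}(F_{0},F_{2},\mathcal{U}^{\ast})$, and therefore $\mu(F_{0},F_{2},\mathcal{U}^{\ast}) \geq |D|$.

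It remains to bound $|D|$ from below, which is a short inclusion--exclusion computation. Since $\phi$ is injective, $D$ is in bijection with $\phi(\dom \phi) \cap \dom \psi$, and both $\phi(\dom \phi)$ and $\dom \psi$ are subsets of $F_{1}$; thus $|D| = |\phi(\dom \phi) \cap \dom \psi| \geq |\phi(\dom \phi)| + |\dom \psi| - |F_{1}| = |\dom \phi| + |\dom \psi| - |F_{1}|$. Combining this with the previous inequality and the choice of $\phi,\psi$ gives the claim. I do not expect a serious obstacle here; the only point requiring care is the relational composition step, where one must correctly single out $\St(U_{1},\mathcal{U})$ as \emph{one} star-set containing both $x$ and $z$, which is exactly what makes $\mathcal{U}^{\ast}$ -- and not $\mathcal{U}$ itself -- the right covering on the left-hand side.
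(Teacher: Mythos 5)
Your proposal is correct and follows essentially the same route as the paper: compose optimal matchings $\phi$ and $\psi$ on the set $D = \phi^{-1}(\dom\psi)$, verify via the star operation that the composite is a matching in $\mathcal{B}(F_{0},F_{2},\mathcal{U}^{\ast})$, and bound $|D|$ by inclusion--exclusion inside $F_{1}$. The only cosmetic difference is that you check the relational composition at the level of elements of $\mathcal{U}$, whereas the paper phrases it as the containment $\St(\phi(x),\mathcal{U}) \subseteq \St(x,\mathcal{U}^{\ast})$; these are the same observation.
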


\begin{proof} Suppose $\phi_{0}$ and $\phi_{1}$ to be matchings in $\mathcal{B}(F_{0},F_{1},\mathcal{U})$ and $\mathcal{B}(F_{1},F_{2},\mathcal{U})$ such that $\vert \dom (\phi_{0}) \vert = \mu (F_{0},F_{1},\mathcal{U})$ and $\vert \dom (\phi_{1}) \vert = \mu (F_{1},F_{2},\mathcal{U})$, respectively. Put $D_{i} \defeq \dom (\phi_{i})$ for each $i \in \{ 0,1 \}$. Let $D \defeq \phi_{0}^{-1}(D_{1})$ and define $\psi \colon D \to F_{2}, \, x \mapsto \phi_{1}(\phi_{0}(x))$. Evidently, $\psi$ is injective. Besides, $\psi (x) = \phi_{1}(\phi_{0}(x)) \in \St (\phi_{0}(x),\mathcal{U}) \subseteq \St (x,\mathcal{U}^{\ast})$ for every $x \in D$. Hence, $\psi$ is a matching in $\mathcal{B}(F_{0},F_{2},\mathcal{U}^{\ast})$. Furthermore, \begin{align*}
	|F_{1}| - |D| &= |F_{1}| - |\phi_{0}(D)| = |F_{1}\setminus \phi_{0}(D)| = |F_{1} \setminus (\phi_{0}(D_{0}) \cap D_{1})| \\
	&= |(F_{1} \setminus \phi_{0}(D_{0})) \cup (F_{1}\setminus D_{1})| \leq |F_{1} \setminus \phi_{0}(D_{0})| + |F_{1}\setminus D_{1}| \\
	&= 2|F_{1}| - \mu (F_{0},F_{1},\mathcal{U}) - \mu (F_{1},F_{2},\mathcal{U}) 
\end{align*} and thus $\mu (F_{0},F_{2},\mathcal{U}^{\ast}) \geq |D| \geq \mu (F_{0},F_{1},\mathcal{U}) + \mu (F_{1},F_{2},\mathcal{U}) - |F_{1}|$. \end{proof}

\begin{cor}\label{corollary:composition.of.matchings} Let $\mathcal{U}$ be a covering of a set $X$ and let $F_{0},\ldots,F_{n} \in \mathcal{F}(X)$. Then \begin{displaymath}
	\mu (F_{0},F_{n},\mathcal{U}^{\ast, n-1}) \geq \sum_{i=0}^{n-1} \mu (F_{i},F_{i+1},\mathcal{U}) - \sum_{i = 1}^{n-1} |F_{i}| .
\end{displaymath} \end{cor}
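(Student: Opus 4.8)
The plan is to prove the estimate by induction on $n \geq 1$, using Lemma~\ref{lemma:composition.of.matchings} as the engine that glues two consecutive matchings into one while passing to the next star level. The base case $n = 1$ requires no work: since $\mathcal{U}^{\ast,0} = \mathcal{U}$ and the sum $\sum_{i=1}^{0}|F_{i}|$ is empty, both sides of the claimed inequality reduce to $\mu(F_{0},F_{1},\mathcal{U})$.

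For the inductive step I would assume the claim for chains of length $n-1$ and apply it to $F_{0},\ldots,F_{n-1}$, giving the induction hypothesis $\mu(F_{0},F_{n-1},\mathcal{U}^{\ast,n-2}) \geq \sum_{i=0}^{n-2}\mu(F_{i},F_{i+1},\mathcal{U}) - \sum_{i=1}^{n-2}|F_{i}|$. The idea is then to splice this onto the single step from $F_{n-1}$ to $F_{n}$. The natural tool is Lemma~\ref{lemma:composition.of.matchings} applied to the covering $\mathcal{W} \defeq \mathcal{U}^{\ast,n-2}$ and the triple $(F_{0},F_{n-1},F_{n})$, which yields $\mu(F_{0},F_{n},\mathcal{U}^{\ast,n-1}) \geq \mu(F_{0},F_{n-1},\mathcal{U}^{\ast,n-2}) + \mu(F_{n-1},F_{n},\mathcal{U}^{\ast,n-2}) - |F_{n-1}|$, where I use $(\mathcal{U}^{\ast,n-2})^{\ast} = \mathcal{U}^{\ast,n-1}$.

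The one mismatch, and the point I expect to be the main obstacle, is that Lemma~\ref{lemma:composition.of.matchings} forces both of the matchings it combines to live at the \emph{same} level $\mathcal{W} = \mathcal{U}^{\ast,n-2}$, whereas the estimate being proved records the last matching at the finer level $\mathcal{U}$. I would reconcile this by a monotonicity observation about matching numbers under refinement. Since $U \subseteq \St(U,\mathcal{V})$ shows $\mathcal{V}^{\ast} \preceq \mathcal{V}$, applying this with $\mathcal{V} = \mathcal{U}^{\ast,k}$ gives $\mathcal{U}^{\ast,k+1} \preceq \mathcal{U}^{\ast,k}$, and transitivity of $\preceq$ then yields $\mathcal{U}^{\ast,n-2} \preceq \mathcal{U}$. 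Hence $\St(x,\mathcal{U}) \subseteq \St(x,\mathcal{U}^{\ast,n-2})$ for every $x$, so every edge of $\mathcal{B}(F_{n-1},F_{n},\mathcal{U})$ is an edge of $\mathcal{B}(F_{n-1},F_{n},\mathcal{U}^{\ast,n-2})$, and therefore $\mu(F_{n-1},F_{n},\mathcal{U}) \leq \mu(F_{n-1},F_{n},\mathcal{U}^{\ast,n-2})$.

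Feeding this inequality together with the induction hypothesis into the displayed bound produces $\mu(F_{0},F_{n},\mathcal{U}^{\ast,n-1}) \geq \sum_{i=0}^{n-1}\mu(F_{i},F_{i+1},\mathcal{U}) - \sum_{i=1}^{n-1}|F_{i}|$, which closes the induction. Everything beyond the level-matching step is routine index bookkeeping; in particular the case $n = 2$ is literally Lemma~\ref{lemma:composition.of.matchings}, since there $\mathcal{U}^{\ast,n-2} = \mathcal{U}$ and the monotonicity step degenerates to an equality.
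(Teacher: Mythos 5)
Your proof is correct and follows the route the paper clearly intends (the corollary is stated without proof as an immediate consequence of Lemma~\ref{lemma:composition.of.matchings}): induction on $n$, splicing the chain via the lemma at level $\mathcal{U}^{\ast,n-2}$. Your monotonicity observation that $\mathcal{U}^{\ast,n-2}\preceq\mathcal{U}$ implies $\mu(F_{n-1},F_{n},\mathcal{U})\leq\mu(F_{n-1},F_{n},\mathcal{U}^{\ast,n-2})$ is exactly the small point needed to make the induction close, and it is handled correctly.
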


\section{Matchings in dynamical systems}\label{section:matchings.in.dynamical.systems}

In this section we prove several characterizations for amenability of dynamical systems in terms of topological matchings.

\begin{thm}\label{theorem:vanishing.matching.number.implies.amenability} Let $(X,G)$ be a dynamical system. If \begin{displaymath}
	\inf_{E \in \mathcal{F}(G)} \inf_{\mathcal{U} \in \mathcal{N}(X)} \sup_{F \in \mathcal{F}_{+}(X)} \inf_{g \in E} \frac{\mu (F,g(F),\mathcal{U})}{|F|} = 1 ,
\end{displaymath} then $(X,G)$ is amenable. \end{thm}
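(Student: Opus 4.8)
The plan is to produce an invariant mean by a weak-* compactness argument, using the hypothesis to manufacture approximately invariant means of the averaging form $\nu_{F}$. By Theorem~\ref{theorem:general.means} applied to $H = {\rm UC}_{b}(X)$ (which contains the constants), the set $M(X)$ is weak-* compact. For a finite set $H_{0} \subseteq {\rm UC}_{b}(X)$, a finite set $E \subseteq G$ and $\epsilon > 0$, I would consider
\begin{displaymath}
	C(H_{0},E,\epsilon) \defeq \{ \mu \in M(X) \mid |\mu(f \circ g) - \mu(f)| \leq \epsilon \textnormal{ for all } f \in H_{0}, \, g \in E \}.
\end{displaymath}
Since each $g \in G \subseteq \Aut(X)$ is uniformly continuous, $f \circ g \in {\rm UC}_{b}(X)$ whenever $f \in {\rm UC}_{b}(X)$, so each functional $\mu \mapsto \mu(f \circ g) - \mu(f)$ is weak-* continuous and $C(H_{0},E,\epsilon)$ is weak-* closed. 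As $C(H_{0} \cup H_{0}', E \cup E', \min\{\epsilon,\epsilon'\}) \subseteq C(H_{0},E,\epsilon) \cap C(H_{0}',E',\epsilon')$, the family $\{ C(H_{0},E,\epsilon) \}$ is downward directed; hence if every member is non-empty, the family has the finite intersection property, and compactness yields a point in $\bigcap C(H_{0},E,\epsilon) = M(X,G)$, proving amenability. It therefore remains to show each $C(H_{0},E,\epsilon)$ is non-empty.

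To do so, I would fix $H_{0},E,\epsilon$ and set $s \defeq 1 + 2\max_{f \in H_{0}}\|f\|_{\infty}$, then choose $\delta > 0$ with $\delta s \leq \epsilon$. By Lemma~\ref{lemma:uniform.tiles} there is $\mathcal{U} \in \mathcal{N}(X)$ with $\diam f(U) \leq \delta$ for all $U \in \mathcal{U}$ and $f \in H_{0}$. Reading the hypothesis as an equality of quantities each bounded above by $1$ (since $\mu(F,g(F),\mathcal{U}) \leq |F|$), it supplies, for this $E$ and this $\mathcal{U}$, some $F \in \mathcal{F}_{+}(X)$ with $\mu(F,g(F),\mathcal{U}) \geq (1-\delta)|F|$ for every $g \in E$. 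I claim that $\nu_{F} \in C(H_{0},E,\epsilon)$.

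To verify the claim, fix $g \in E$ and a matching $\phi \colon D \to g(F)$ in $\mathcal{B}(F,g(F),\mathcal{U})$ with $D \subseteq F$ and $|D| \geq (1-\delta)|F|$. For $f \in H_{0}$, using that $\nu_{F}(f \circ g) = \frac{1}{|F|}\sum_{y \in g(F)} f(y) = \nu_{g(F)}(f)$ (because $g$ restricts to a bijection $F \to g(F)$ and $|g(F)| = |F|$), splitting both sums according to $D$ and $\phi(D)$ gives
\begin{displaymath}
	\nu_{F}(f \circ g) - \nu_{F}(f) = \frac{1}{|F|}\left( \sum_{x \in D}(f(\phi(x)) - f(x)) + \sum_{y \in g(F)\setminus \phi(D)} f(y) - \sum_{x \in F \setminus D} f(x) \right).
\end{displaymath}
For $x \in D$ the relation $\phi(x) \in \St(x,\mathcal{U})$ yields some $U \in \mathcal{U}$ with $\{x,\phi(x)\} \subseteq U$, whence $|f(\phi(x)) - f(x)| \leq \diam f(U) \leq \delta$, so the first sum is at most $\delta|F|$ in absolute value. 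The remaining two sums range over sets of size $|F| - |D| \leq \delta|F|$ and are bounded by $\|f\|_{\infty}$ per term, contributing at most $2\delta|F|\,\|f\|_{\infty}$. Altogether $|\nu_{F}(f \circ g) - \nu_{F}(f)| \leq \delta(1 + 2\|f\|_{\infty}) \leq \delta s \leq \epsilon$, as required.

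The genuinely substantive step is this last estimate: it converts a near-perfect $\mathcal{U}$-matching between $F$ and its translate $g(F)$ into near-invariance of $\nu_{F}$ along $g$, the point being that matched pairs sit inside a common member of the fine covering $\mathcal{U}$ and hence are barely separated by the functions of $H_{0}$. Everything else -- the reduction to non-emptiness of the closed sets $C(H_{0},E,\epsilon)$ and the passage to a weak-* limit -- is routine; the only additional care needed is the correct reading of the iterated infima and suprema in the hypothesis, namely that it forces, for each prescribed $E$ and $\mathcal{U}$, the existence of a single $F$ making all of the matching ratios $\mu(F,g(F),\mathcal{U})/|F|$ ($g \in E$) simultaneously close to $1$.
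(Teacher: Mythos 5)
Your proposal is correct and follows essentially the same route as the paper's own proof: the same weak-* compactness/finite-intersection reduction to the non-emptiness of the closed sets of approximately invariant means, the same choice of a fine covering via Lemma~\ref{lemma:uniform.tiles}, and the same estimate converting a near-perfect matching between $F$ and $g(F)$ into $|\nu_{F}(f\circ g)-\nu_{F}(f)|\leq\epsilon$. No gaps.
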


\begin{proof} To prove the first implication, let $\epsilon > 0$, $H \in \mathcal{F}({\rm UC}_{b}(X))$ and $E \in \mathcal{F}_{+}(G)$. We observe that \begin{displaymath}
	A(H,E,\epsilon) \defeq \{ \nu \in M({\rm UC}_{b}(X)) \mid \forall f \in H \, \forall g \in E \colon \vert \nu(f) - \nu(f \circ g) \vert \leq \epsilon \}
\end{displaymath} is closed in the compact Hausdorff space $M({\rm UC}_{b}(X))$. We shall prove that $A(H,E,\epsilon) \ne \emptyset$. To this end, we put $\theta \defeq \epsilon /(1 + 2\sup_{f \in H} \Vert f \Vert_{\infty})$. Due to Lemma~\ref{lemma:uniform.tiles}, there exists $\mathcal{U} \in \mathcal{N}(X)$ such that $\diam f(U) \leq \theta$ for all $U \in \mathcal{U}$ and $f \in H$. By assumption, there exists $F \in \mathcal{F}_{+}(X)$ such that $|F| - \mu (F,g(F),\mathcal{U}) \leq \theta |F|$ for all $g \in E$. We show that $\nu_{F}$ is a member of $A(H,E,\epsilon)$. Of course, $\nu_{F} \in M({\rm UC}_{b}(X))$. Now, consider any $g \in E$. Let $\phi \colon D \to g(F)$ be an injective map such that $D \subseteq F$, $|D| = \mu (F,g(F),\mathcal{U})$, and $\phi (x) \in \St (x,\mathcal{U})$ for all $x \in D$. If $f \in H$, then \begin{align*}
	|\nu_{F}(f) - \nu_{F}(f \circ g)| &= \frac{1}{|F|}\left|\sum_{x \in F} f(x) - \sum_{x \in F} f(g(x))\right| \\
	&= \frac{1}{|F|}\left|\sum_{x \in D} (f(x)-f(\phi (x))) + \sum_{x \in F\setminus D} f(x) - \sum_{x \in g(F)\setminus \phi (D)} f(x)\right| \\
	&\leq \frac{1}{|F|}\left(\sum_{x \in D} |f(x)-f(\phi (x))| + \sum_{x \in F\setminus D} |f(x)| + \sum_{x \in g(F)\setminus \phi (D)} |f(x)|\right) \\
	&\leq \theta \frac{\mu (F,g(F),\mathcal{U})}{|F|} + 2\frac{|F| - \mu (F,g(F),\mathcal{U})}{|F|}\Vert f\Vert_{\infty} \\
	&\leq \theta + 2\Vert f\Vert_{\infty}\theta = (1+2\Vert f \Vert_{\infty}) \theta \leq \epsilon .
\end{align*} This proves our claim. Therefore, $A(H,E,\epsilon) \ne \emptyset$. Since \begin{displaymath}
	A(H_{0} \cup H_{1},E_{0} \cup E_{1}, \epsilon_{0} \wedge \epsilon_{1}) \subseteq A(H_{0},E_{0},\epsilon_{0}) \cap A(H_{1},E_{1},\epsilon_{1})
\end{displaymath} for all $H_{0},H_{1} \in \mathcal{F}({\rm UC}_{b}(X))$, $E_{0},E_{1} \in \mathcal{F}(G)$ and $\epsilon_{0},\epsilon_{1} > 0$, we conclude that \begin{displaymath}
	\mathcal{A} \defeq \{ A(H,E,\epsilon) \mid H \in \mathcal{F}({\rm UC}_{b}(X)), \, E \in \mathcal{F}(G), \, \epsilon > 0 \}
\end{displaymath} has the finite intersection property. By Theorem~\ref{theorem:general.means}, $M({\rm UC}_{b}(X))$ is compact. Consequently, $\bigcap \mathcal{A} \ne \emptyset$. Finally, we observe that $M(X,G) = \bigcap \mathcal{A}$, wherefore $(X,G)$ is amenable. \end{proof}

\begin{thm}\label{theorem:amenability.implies.vanishing.matching.number} Let $(X,G)$ be a dynamical system. If $(X,G)$ is amenable and every open non-empty subset of $X$ is infinite, then \begin{displaymath}
	\inf_{E \in \mathcal{F}(G)} \inf_{\mathcal{U} \in \mathcal{N}(X)} \sup_{F \in \mathcal{F}_{+}(X)} \inf_{g \in E} \frac{\mu (F,g(F),\mathcal{U})}{|F|} = 1 .
\end{displaymath} \end{thm}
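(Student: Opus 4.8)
The goal is to prove that amenability of $(X,G)$, together with the hypothesis that every open non-empty subset of $X$ is infinite, implies the matching-ratio identity
$$
\inf_{E \in \mathcal{F}(G)} \inf_{\mathcal{U} \in \mathcal{N}(X)} \sup_{F \in \mathcal{F}_{+}(X)} \inf_{g \in E} \frac{\mu (F,g(F),\mathcal{U})}{|F|} = 1 .
$$
Since $\mu(F,g(F),\mathcal{U}) \leq |F|$ always holds (a matching is injective into a set of size $|F|$), the ratio never exceeds $1$, so the real content is the lower bound: given $E \in \mathcal{F}(G)$ and $\mathcal{U} \in \mathcal{N}(X)$ and any $\delta > 0$, I must produce a single $F \in \mathcal{F}_{+}(X)$ with $\mu(F,g(F),\mathcal{U}) \geq (1-\delta)|F|$ simultaneously for all $g \in E$.

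Let me think about the strategy. The plan is to start from an invariant mean $\mu \in M(X,G)$ and approximate it by finitely-supported means. Fix $E$ and $\mathcal{U}$. The key idea is to encode the matching condition as a statement about a suitable finite family of test functions in ${\rm UC}_b(X)$, so that near-invariance of a mean $\nu_F$ on those functions forces a large matching number. Using Lemma~\ref{lemma:uniform.covers}, I would take a uniform partition of unity $(f_U)_{U \in \mathcal{U}}$ subordinate to $\mathcal{U}$; these are the natural test functions because they detect which cell of the covering a point lies in. The invariant mean $\mu$ can be approximated, by Lemma~\ref{lemma:dense.set.of.means} (whose hypothesis is exactly that open non-empty sets are infinite), by means of the form $\nu_F$ with $F \in \mathcal{F}_+(X)$; invariance of $\mu$ then gives, for a well-chosen finite test set $H$ depending on the $f_U$ and on $E$, that $\nu_F(f) \approx \nu_F(f \circ g)$ for all $f \in H$ and $g \in E$.

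The heart of the argument is to convert this approximate invariance into a matching lower bound via Hall's theorem (Corollary~\ref{corollary:hall}, or more precisely the deficiency version in Theorem~\ref{theorem:hall}). Concretely, for a fixed $g$, consider the bipartite graph $\mathcal{B}(F, g(F), \mathcal{U})$; by Hall's deficiency formula, $|F| - \mu(F,g(F),\mathcal{U}) = \sup_{S \subseteq F}\bigl(|S| - |N_{\mathcal{B}}(S)|\bigr)$. If the matching number were small for some $g$, there would be a set $S \subseteq F$ with $|S|$ substantially larger than $|N_{\mathcal{B}}(S)| = \{ y \in g(F) \mid \exists U \in \mathcal{U}, \{x,y\} \subseteq U \text{ for some } x \in S\}$. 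I would build from such a witness set $S$ a uniformly continuous function (a suitable combination of the partition-of-unity functions $f_U$, essentially an indicator of the union of cells meeting $S$) that is large on $S$ but small on points of $g(F)$ outside $N_{\mathcal{B}}(S)$; evaluating the approximately-invariant mean $\nu_F$ on this function and comparing $\nu_F(f)$ with $\nu_F(f \circ g)$ would produce a contradiction with the mass imbalance $|S| > |N_{\mathcal{B}}(S)| + \delta|F|$. This forces the deficiency to be at most $\delta|F|$ for every $g \in E$, which is exactly the desired $\mu(F,g(F),\mathcal{U}) \geq (1-\delta)|F|$.

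The main obstacle I anticipate is the bookkeeping in the last step: translating the combinatorial deficiency witness $S$ into an honest element of ${\rm UC}_b(X)$ and choosing the test set $H$ and the tolerance $\epsilon$ in the approximate invariance so that the star-relation defining the graph $\mathcal{B}(F,g(F),\mathcal{U})$ is faithfully captured. The subtlety is that $y \in \St(x,\mathcal{U})$ means $x,y$ share a cell, so the test function built from $S$ must be designed so that its value at a point reflects membership in $\bigcup\{U \in \mathcal{U} \mid U \cap S \neq \emptyset\}$; I expect to need the functions $g \mapsto f_U \circ g$ for $U \in \mathcal{U}$ and $g \in E$ in the finite set $H$, and to use the partition-of-unity identity $\sum_U f_U \equiv 1$ together with the support condition $\spt(f_U) \subseteq U$ to control cross terms. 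Getting the constants to line up so that a genuine contradiction emerges, rather than merely a weak estimate, is where the care is required.
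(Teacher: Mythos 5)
Your proposal is correct and follows essentially the same route as the paper: a partition of unity subordinate to $\mathcal{U}$ (Lemma~\ref{lemma:uniform.covers}), approximation of the invariant mean by some $\nu_{F}$ (Lemma~\ref{lemma:dense.set.of.means}, which is where the hypothesis on open sets enters), and Hall's deficiency formula applied with the test function $\sum_{U \cap S \neq \emptyset} f_{U}$ to bound $|S| - |N_{\mathcal{B}}(S)|$. The only cosmetic difference is that the paper runs the deficiency estimate directly for every $S \subseteq F$ using near-invariance of the $f_{U}$ themselves (so the test set is just $\{f_{U}\}_{U \in \mathcal{U}}$ with tolerance $(1-\theta)/|\mathcal{U}|$), rather than by contradiction from a witness set.
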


\begin{proof} Let $\theta \in [0,1)$, $\mathcal{U} \in \mathcal{N}(X)$ and $E \in \mathcal{F}_{+}(G)$. By Lemma~\ref{lemma:uniform.covers}, there exists a family of uniformly continuous functions $f_{U} \colon X \to [0,1]$ ($U \in \mathcal{U}$) such that \begin{enumerate}
	\item[(1)]	$\spt (f_{U}) \subseteq U$ for every $U \in \mathcal{U}$,
	\item[(2)]	$\sum_{U \in \mathcal{U}} f_{U}(x) = 1$ for all $x \in X$.
\end{enumerate} Since $(X,G)$ is amenable and every open non-empty subset of $X$ is infinite, Lemma~\ref{lemma:dense.set.of.means} asserts that there exists $F \in \mathcal{F}_{+}(X)$ such that \begin{enumerate}
	\item[(3)]	$\frac{1}{|F|}\left|\sum_{x \in F} f_{U}(x) - \sum_{x \in F} f_{U}(g(x))\right| \leq \frac{1-\theta}{|\mathcal{U}|}$ for all $U \in \mathcal{U}$ and $g \in E$.
\end{enumerate} We show that $\mu (F,g(F),\mathcal{U}) \geq \theta |F|$ for all $g \in E$. To this end, let $g \in E$. We consider the bipartite graph $\mathcal{B} \defeq \mathcal{B}(F,g(F),\mathcal{U})$. If $S \subseteq F$, then we put $\mathcal{V} \defeq \{ U \in \mathcal{U} \mid U \cap S \ne \emptyset \}$ and $T \defeq N_{\mathcal{B}}(S)$, and we observe that \begin{align*}
	|S| &= \sum_{x \in S} 1 \stackrel{\textnormal{(2)}}{=} \sum_{x \in S} \sum_{U \in \mathcal{U}} f_{U}(x) \stackrel{\textnormal{(1)}}{=} \sum_{x \in S} \sum_{U \in \mathcal{V}} f_{U}(x) = \sum_{U \in \mathcal{V}} \sum_{x \in S} f_{U}(x) \leq \sum_{U \in \mathcal{V}} \sum_{x \in F} f_{U}(x) \\
	&\stackrel{\textnormal{(3)}}{\leq} \sum_{U \in \mathcal{V}} \left( \frac{(1-\theta ) |F|}{|\mathcal{U}| + 1} + \sum_{y \in g(F)} f_{U}(y) \right) \leq (1-\theta ) |F| + \sum_{U \in \mathcal{V}} \sum_{y \in g(F)} f_{U}(y) \\
	&\stackrel{\textnormal{(1)}}{=} (1-\theta ) |F| + \sum_{U \in \mathcal{V}} \sum_{y \in T} f_{U}(y) = (1-\theta ) |F| + \sum_{y \in T} \sum_{U \in \mathcal{V}} f_{U}(y) \\
	&\leq (1-\theta ) |F| + \sum_{y \in T} \sum_{U \in \mathcal{U}} f_{U}(y) \stackrel{\textnormal{(2)}}{=} (1-\theta ) |F| + \sum_{y \in T} 1 = (1-\theta )|F| + |T| ,
\end{align*} that is, $|S| - \left|N_{\mathcal{B}}(S)\right| \leq (1-\theta ) |F|$. According to Theorem~\ref{theorem:hall}, it follows that \begin{align*}
	\frac{\mu (F,g(F),\mathcal{U})}{|F|} &= \frac{|F|-\sup_{S \subseteq F} \left( |S| - \left| N_{\mathcal{B}}(S) \right|\right)}{|F|} \\
	&\geq \frac{|F|-(1-\theta )|F|}{|F|} = \theta . 
\end{align*} This substantiates that $\mu (X,G) = 1$. \end{proof}

\begin{cor}\label{corollary:amenable.perfect.hausdorff.dynamical.systems} Let $X$ be a perfect Hausdorff uniform space and let $G$ be a subgroup of $\Aut (X)$. Then $(X,G)$ is amenable if and only if the following holds: for every $\theta \in [0,1)$, every finite subset $E \subseteq G$, and every finite uniform covering $\mathcal{U}$ of $X$, there exists a finite non-empty subset $F \subseteq X$ such that \begin{displaymath}
	\forall g \in E \colon \, \mu (F,g(F),\mathcal{U}) \geq \theta \vert F \vert .
\end{displaymath} \end{cor}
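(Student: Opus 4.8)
The plan is to recognise that the displayed matching condition is nothing but an unwinding of the nested extremal quantity
\[
	\inf_{E \in \mathcal{F}(G)} \inf_{\mathcal{U} \in \mathcal{N}(X)} \sup_{F \in \mathcal{F}_{+}(X)} \inf_{g \in E} \frac{\mu (F,g(F),\mathcal{U})}{|F|} = 1
\]
figuring in Theorems~\ref{theorem:vanishing.matching.number.implies.amenability} and~\ref{theorem:amenability.implies.vanishing.matching.number}, so that the corollary reduces to combining those two results.

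First I would record the elementary bound $\mu (F,g(F),\mathcal{U}) \leq |F|$, valid because any matching is injective with domain contained in $F$; consequently every ratio occurring above lies in $[0,1]$, and hence so do each inner supremum and the whole nested infimum. This is the only fact needed to convert the equality ``$\,\cdots = 1\,$'' into the combinatorial formulation. Unwinding quantifier by quantifier: an infimum of quantities $\leq 1$ equals $1$ exactly when each of them equals $1$, so the displayed equality holds if and only if, for every $E \in \mathcal{F}(G)$ and every $\mathcal{U} \in \mathcal{N}(X)$, one has $\sup_{F} \inf_{g \in E} \tfrac{\mu (F,g(F),\mathcal{U})}{|F|} = 1$; and a supremum of quantities $\leq 1$ equals $1$ exactly when, for each $\theta \in [0,1)$, some $F \in \mathcal{F}_{+}(X)$ satisfies $\inf_{g \in E} \tfrac{\mu (F,g(F),\mathcal{U})}{|F|} > \theta$, i.e.\ $\mu (F,g(F),\mathcal{U}) \geq \theta |F|$ for all $g \in E$. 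This is precisely the property displayed in the corollary. (The case $E = \emptyset$ is harmless: the empty infimum is $+\infty$ and so does not affect the outer infimum, while the matching condition holds vacuously.)

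With this dictionary in hand the two implications are immediate. For the ``if'' direction I would feed the matching property into the dictionary to obtain the displayed equality, and then invoke Theorem~\ref{theorem:vanishing.matching.number.implies.amenability}, which requires no hypothesis on $X$ and yields amenability of $(X,G)$. For the ``only if'' direction I would first verify the standing hypothesis of Theorem~\ref{theorem:amenability.implies.vanishing.matching.number}: since a Hausdorff space is $T_{1}$, the space $X$ is $T_{1}$ and perfect, so Remark~\ref{remark:isolated.points}(2) guarantees that every open non-empty subset of $X$ is infinite. Theorem~\ref{theorem:amenability.implies.vanishing.matching.number} then delivers the displayed equality from amenability, which the dictionary re-expresses as the matching property.

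As the substantive work is carried entirely by the two preceding theorems, there is no genuine obstacle; the only point demanding (minor) care is the bookkeeping in the translation above, in particular tracking the passage between strict and non-strict inequalities when converting a supremum equal to $1$ back into the existence of a suitable $F$.
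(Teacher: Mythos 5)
Your proof is correct and follows exactly the route the paper takes: the paper's own proof is a one-line citation of Theorem~\ref{theorem:vanishing.matching.number.implies.amenability}, Theorem~\ref{theorem:amenability.implies.vanishing.matching.number}, and Remark~\ref{remark:isolated.points}, and you supply precisely that combination, merely making explicit the quantifier bookkeeping that the paper leaves tacit.
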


\begin{proof} This follows from Theorem~\ref{theorem:vanishing.matching.number.implies.amenability}, Theorem~\ref{theorem:amenability.implies.vanishing.matching.number}, and Remark~\ref{remark:isolated.points}. \end{proof}

Of course, the previous result particularly applies to dynamical systems on perfect compact Hausdorff spaces. Moreover, since any open covering of a totally disconnected, compact Hausdorff space is refined by a finite partition of the space into clopen subsets, we also immediately obtain the following consequence.

\begin{cor}\label{corollary:amenable.cantor.dynamical.systems} Let $X$ be a perfect, totally disconnected, compact Hausdorff space, and let $G$ be a subgroup of $\Aut (X)$. Then $(X,G)$ is amenable if and only if the following holds: for every $\epsilon > 0$, every finite subset $E \subseteq G$, and every finite partition $\mathcal{U}$ of $X$ into clopen subsets, there exists a finite non-empty subset $F \subseteq X$ such that \begin{displaymath}
	\forall U \in \mathcal{U} \, \forall g \in E \colon \, \left\lvert \lvert F \cap U \rvert - \lvert g(F) \cap U \rvert \right\rvert \leq \epsilon \lvert F \rvert .
\end{displaymath} \end{cor}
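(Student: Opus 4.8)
The plan is to obtain the statement as a direct consequence of Corollary~\ref{corollary:amenable.perfect.hausdorff.dynamical.systems}. A perfect, totally disconnected, compact Hausdorff space is in particular a perfect Hausdorff uniform space when equipped with its unique compatible uniformity, and there the finite uniform coverings are precisely the finite coverings that are refined by a finite open covering. The bridge between the matching inequality of Corollary~\ref{corollary:amenable.perfect.hausdorff.dynamical.systems} and the ``almost balanced'' inequality in the present statement is supplied by two observations: a clean evaluation of the matching number for partitions, and the topological fact that every finite uniform covering of $X$ is refined by a finite partition of $X$ into clopen sets.

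First I would record the combinatorial identity. Let $\mathcal{U}$ be a finite partition of $X$ into clopen sets, let $g \in G$, and let $F \in \mathcal{F}_{+}(X)$. Since $\mathcal{U}$ is a partition, two points lie in a common member of $\mathcal{U}$ exactly when they lie in the same cell; hence the bipartite graph $\mathcal{B}(F,g(F),\mathcal{U})$ is the disjoint union, over $U \in \mathcal{U}$, of the complete bipartite graphs on $F \cap U$ and $g(F) \cap U$. A maximum matching therefore saturates $\min(|F \cap U|,|g(F)\cap U|)$ vertices in each cell, so $\mu(F,g(F),\mathcal{U}) = \sum_{U \in \mathcal{U}} \min(|F \cap U|,|g(F)\cap U|)$. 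As $g$ is a bijection, $\sum_{U}|F\cap U| = |F| = |g(F)| = \sum_{U}|g(F)\cap U|$, and a short computation then yields $|F| - \mu(F,g(F),\mathcal{U}) = \tfrac{1}{2}\sum_{U \in \mathcal{U}} \bigl\lvert |F\cap U| - |g(F)\cap U| \bigr\rvert$. This identity is exactly what converts the bound $\mu(F,g(F),\mathcal{U}) \geq \theta|F|$ into control of the quantities $\bigl\lvert |F\cap U| - |g(F)\cap U|\bigr\rvert$ and back. With it, the forward direction is immediate: assuming $(X,G)$ amenable, I would feed a given clopen partition $\mathcal{U}$ and finite $E \subseteq G$ into Corollary~\ref{corollary:amenable.perfect.hausdorff.dynamical.systems} with $\theta \defeq 1 - \epsilon/2$, obtaining $F$ with $\mu(F,g(F),\mathcal{U}) \geq \theta|F|$ for all $g \in E$; since each individual term $\bigl\lvert|F\cap U|-|g(F)\cap U|\bigr\rvert$ is bounded by $2(|F|-\mu(F,g(F),\mathcal{U})) \leq 2(1-\theta)|F| = \epsilon|F|$, the required inequality holds.

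For the converse I would verify the matching condition of Corollary~\ref{corollary:amenable.perfect.hausdorff.dynamical.systems} for an arbitrary $\theta \in [0,1)$, finite $E \subseteq G$, and finite uniform covering $\mathcal{U}$ of $X$. Here the one genuinely topological step enters, and I expect it to be the main point to get right: using that $X$ is totally disconnected and compact, hence has a basis of clopen sets, one covers $X$ by finitely many clopen sets each contained in a member of $\mathcal{U}$ and disjointifies them to obtain a finite clopen partition $\mathcal{W}$ with $\mathcal{U} \preceq \mathcal{W}$. Because $\mathcal{W}$ refines $\mathcal{U}$, every edge of $\mathcal{B}(F,g(F),\mathcal{W})$ is an edge of $\mathcal{B}(F,g(F),\mathcal{U})$, so every $\mathcal{W}$-matching is a $\mathcal{U}$-matching and $\mu(F,g(F),\mathcal{U}) \geq \mu(F,g(F),\mathcal{W})$. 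Applying the hypothesis to $\mathcal{W}$, $E$, and $\epsilon \defeq 2(1-\theta)/|\mathcal{W}|$ produces $F \in \mathcal{F}_{+}(X)$ with $\bigl\lvert|F\cap W|-|g(F)\cap W|\bigr\rvert \leq \epsilon|F|$ for all $W \in \mathcal{W}$ and $g \in E$; summing and using the identity above gives $\mu(F,g(F),\mathcal{W}) \geq \bigl(1 - \tfrac12|\mathcal{W}|\epsilon\bigr)|F| = \theta|F|$, whence $\mu(F,g(F),\mathcal{U}) \geq \theta|F|$. This is precisely the condition of Corollary~\ref{corollary:amenable.perfect.hausdorff.dynamical.systems}, so $(X,G)$ is amenable.

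The only subtleties requiring care are the direction of the refinement inequality — refining the covering can only remove edges, hence only decrease the matching number, which is why $\mathcal{W}$ is forced to refine $\mathcal{U}$ rather than the reverse — and ensuring $\theta \in [0,1)$ in the forward direction, for which it suffices to treat $\epsilon < 1$, the inequality being trivial otherwise since $\bigl\lvert|F\cap U|-|g(F)\cap U|\bigr\rvert \leq |F|$ and a nonempty $F$ exists as $X$ is infinite.
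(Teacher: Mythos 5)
Your proposal is correct and follows essentially the same route as the paper, which derives this corollary from Corollary~\ref{corollary:amenable.perfect.hausdorff.dynamical.systems} via the observation that every finite uniform covering of a totally disconnected compact Hausdorff space is refined by a finite clopen partition. The paper leaves the combinatorial bridge implicit; your identity $\lvert F\rvert - \mu(F,g(F),\mathcal{U}) = \tfrac{1}{2}\sum_{U\in\mathcal{U}}\bigl\lvert \lvert F\cap U\rvert - \lvert g(F)\cap U\rvert\bigr\rvert$ for partitions is exactly the right way to make it explicit, and both directions check out.
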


In the following we shall point out how the criterion established above behaves when passing to dense or generating subsets of the acting group. These results are straightforward and we record them for the convenience of the reader.

\begin{prop}\label{proposition:generating.subsets} Let $(X,G)$ be a dynamical system and let $S$ be a symmetric generating subset of $G$ containing the identity map. Then \begin{displaymath}
	\inf_{E \in \mathcal{F}(G)} \inf_{\mathcal{U} \in \mathcal{N}(X)} \sup_{F \in \mathcal{F}_{+}(X)} \inf_{g \in E} \frac{\mu (F,g(F),\mathcal{U})}{|F|} = 1 
\end{displaymath} if and only if \begin{displaymath}
	\inf_{E \in \mathcal{F}(S)} \inf_{\mathcal{U} \in \mathcal{N}(X)} \sup_{F \in \mathcal{F}_{+}(X)} \inf_{g \in E} \frac{\mu (F,g(F),\mathcal{U})}{|F|} = 1 .
\end{displaymath} \end{prop}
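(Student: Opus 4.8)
The forward implication is immediate: since $S \subseteq G$ we have $\mathcal{F}(S) \subseteq \mathcal{F}(G)$, so the left-hand infimum ranges over a larger index set than the right-hand one and is therefore no larger; as both quantities are bounded above by $1$ (a matching is injective, whence $\mu(F,g(F),\mathcal{U}) \leq |F|$), the value $1$ on the left forces the value $1$ on the right. For the reverse implication I would first unwind the equalities: since every summand is at most $1$, each displayed infimum equals $1$ precisely when, for all $\theta \in [0,1)$, every $E$ in the relevant family, and every $\mathcal{U} \in \mathcal{N}(X)$, there is $F \in \mathcal{F}_{+}(X)$ with $\mu(F,g(F),\mathcal{U}) \geq \theta|F|$ for all $g \in E$. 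So I assume this matching condition for every finite $E \subseteq S$, fix $\theta \in [0,1)$, $E \in \mathcal{F}(G)$ and $\mathcal{U} \in \mathcal{N}(X)$, and aim to produce a single $F$ that is good for all of $E$ at once.

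Next I would reduce $E$ to $S$ by writing words. As $S$ is symmetric, contains $\id$, and generates $G$, each $g \in E$ is a product $g = t_{g,1}\cdots t_{g,n}$ of a common length $n$ (pad shorter words with $\id \in S$) whose letters all lie in a fixed finite set $S_{0} \subseteq S$. For fixed $g$ write $g_{j} \defeq t_{g,1}\cdots t_{g,j}$ (so $g_{0} = \id$, $g_{n} = g$) and, for a not-yet-chosen $F \in \mathcal{F}_{+}(X)$, set $F_{j} \defeq g_{j}(F)$; then $F_{j+1} = g_{j}(t_{g,j+1}(F))$ and $|F_{j}| = |F|$. Let $G_{1} \subseteq G$ be the finite set of all prefixes $g_{j}$ ($g \in E$, $0 \leq j < n$). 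The plan is to bound the defect $|F| - \mu(F,g(F),\mathcal{U})$ by one defect per letter via the telescoping estimate of Corollary~\ref{corollary:composition.of.matchings}. Choosing a finite uniform covering $\mathcal{W}_{0}$ with $\mathcal{U} \preceq \mathcal{W}_{0}^{\ast,n-1}$ (iterate Lemma~\ref{lemma:uniform.star.refinements}, using that $\preceq$ is preserved under starring) and using $|F_{j}| = |F|$, that corollary gives
\begin{displaymath}
	\mu(F,g(F),\mathcal{U}) \,\geq\, \mu\bigl(F,g(F),\mathcal{W}_{0}^{\ast,n-1}\bigr) \,\geq\, \sum_{j=0}^{n-1}\mu(F_{j},F_{j+1},\mathcal{W}_{0}) - (n-1)|F| .
\end{displaymath}

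The crux is that each step $\mu(F_{j},F_{j+1},\mathcal{W}_{0}) = \mu(g_{j}(F),g_{j}(t_{g,j+1}(F)),\mathcal{W}_{0})$ refers to the translated set $g_{j}(F)$ rather than to $F$, and $G$ acts only by uniform automorphisms, which need not preserve $\mathcal{W}_{0}$. Applying Remark~\ref{remark:bijective.graph.homomorphisms} to the bijection $g_{j}^{-1}$ in both directions rewrites this step as $\mu(F,t_{g,j+1}(F),g_{j}^{-1}(\mathcal{W}_{0}))$, where the transported covering $g_{j}^{-1}(\mathcal{W}_{0}) \in \mathcal{N}(X)$ depends on the prefix. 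I would absorb this dependence by passing to the common refinement
\begin{displaymath}
	\mathcal{W} \defeq \bigvee\nolimits_{h \in G_{1}} h^{-1}(\mathcal{W}_{0}) ,
\end{displaymath}
which is again a finite uniform covering (the uniformity is closed under finite joins) and refines every $g_{j}^{-1}(\mathcal{W}_{0})$; since matching numbers are monotone in the covering, $\mu(F,t_{g,j+1}(F),g_{j}^{-1}(\mathcal{W}_{0})) \geq \mu(F,t_{g,j+1}(F),\mathcal{W})$. Now I apply the hypothesis to the finite set $S_{0} \subseteq S$, the covering $\mathcal{W}$, and a parameter $\theta'$ chosen so that $1 - n(1-\theta') \geq \theta$, obtaining one $F \in \mathcal{F}_{+}(X)$ with $\mu(F,t(F),\mathcal{W}) \geq \theta'|F|$ for every $t \in S_{0}$. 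Feeding this back through the two displays yields $\mu(F,g(F),\mathcal{U}) \geq (1 - n(1-\theta'))|F| \geq \theta|F|$ simultaneously for every $g \in E$, since $\mathcal{W}$, $\mathcal{W}_{0}$, $S_{0}$ and $n$ were extracted from $E$ as a whole. The main obstacle is precisely the construction of this single covering $\mathcal{W}$ that is robust under the finitely many prefix automorphisms; once it is isolated, the rest is routine telescoping combined with the monotonicity of $\mu$ in the covering.
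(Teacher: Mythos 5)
Your proposal is correct and follows essentially the same route as the paper's proof: decompose each element of $E$ into a word of uniform length over a finite subset of $S$, take an $(n-1)$-fold star-refinement of $\mathcal{U}$, pass to the common refinement $\bigvee_h h^{-1}(\mathcal{V})$ over the (finitely many) translating prefixes, and telescope via Corollary~\ref{corollary:composition.of.matchings} with the same choice of $\theta'$. The only differences are cosmetic (prefixes versus suffixes in the word decomposition, and refining over the prefix set rather than all of $E^{n}$).
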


\begin{proof} ($\Longrightarrow $) This is obvious.

($\Longleftarrow $) Let $\theta_{0} \in [0,1)$, $E_{0} \in \mathcal{F}(G)$ and $\mathcal{U} \in \mathcal{N}(X)$. Since $E_{0}$ is finite, there exist a finite subset  $E \subseteq S$ as well as $n \in \mathbb{N}\setminus \{ 0 \}$ such that $E_{0} \subseteq E^{n}$. By Lemma~\ref{lemma:uniform.star.refinements}, there exists $\mathcal{V} \in \mathcal{N}(X)$ such that $\mathcal{U} \preceq \mathcal{V}^{\ast,n-1}$. Consider $\mathcal{W} \defeq \bigvee_{g \in E^{n}} g^{-1}(\mathcal{V})$ and $\theta \defeq \frac{n+\theta_{0}-1}{n} \in [0,1)$. By assumption, there exists $F \in \mathcal{F}_{+}(X)$ such that $\inf_{s \in E} \mu (F,s(F),\mathcal{W}) \geq \theta |F|$. We argue that $\inf_{g \in E_{0}} \mu(F,g(F),\mathcal{W}) \geq \theta_{0}|F|$. To this end, let $g \in E_{0}$. Then there exist $g_{1},\ldots,g_{n} \in E$ such that $g = g_{n}\cdots g_{1}$. For each $i \in \{ 1,\ldots, n \}$, let $s_{i} \defeq g_{n}\cdots g_{i}$. Note that \begin{displaymath}
	\mu (s_{i+1}(F),s_{i}(F),\mathcal{V}) = \mu (s_{i+1}(F),s_{i+1}(g_{i}(F)),\mathcal{V}) = \mu (F,g_{i}(F),s_{i+1}^{-1}(\mathcal{V}))
\end{displaymath} for each $i \in \{ 1,\ldots,n-1 \}$. Hence, \begin{align*}
	\mu (F,g(F),\mathcal{U}) &\stackrel{\ref{corollary:composition.of.matchings}}{\geq} \mu (F,g_{n}(F),\mathcal{V}) + \sum_{i=1}^{n-1} \mu (s_{i+1}(F),s_{i}(F),\mathcal{V}) - \sum_{i=1}^{n-1} |s_{i}(F)| \\
	&= \mu (F,g_{n}(F),\mathcal{V}) + \sum_{i=1}^{n-1} \mu (F,g_{i}(F),s_{i+1}^{-1}(\mathcal{V})) - (n-1) |F| \\
	&\geq \mu (F,g_{n}(F),\mathcal{W}) + \sum_{i=1}^{n-1} \mu (F,g_{i}(F),\mathcal{W}) - (n-1) |F| \\
	&\geq n \theta |F| - (n-1)|F| = \theta_{0}|F| .
\end{align*} Consequently, $\inf_{g \in E_{0}} \mu (F,g(F),\mathcal{U}) \geq \theta_{0}|F|$. This proves our claim. \end{proof}

\begin{prop}\label{proposition:dense.subgroups} Let $X$ be a uniform space and let $H \subseteq G \subseteq \Aut (X)$. If $H$ is dense in $G$ with respect to the topology of uniform convergence, then \begin{displaymath}
	\inf_{E \in \mathcal{F}(G)} \inf_{\mathcal{U} \in \mathcal{N}(X)} \sup_{F \in \mathcal{F}_{+}(X)} \inf_{g \in E} \frac{\mu (F,g(F),\mathcal{U})}{\vert F \vert} = \inf_{E \in \mathcal{F}(H)} \inf_{\mathcal{U} \in \mathcal{N}(X)} \sup_{F \in \mathcal{F}_{+}(X)} \inf_{g \in E} \frac{\mu (F,g(F),\mathcal{U})}{\vert F \vert} .
\end{displaymath} \end{prop}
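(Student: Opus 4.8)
The plan is to establish the two inequalities between the left-hand quantity $\ell_{G}$ and the right-hand quantity $\ell_{H}$ separately. Since $H \subseteq G$ entails $\mathcal{F}(H) \subseteq \mathcal{F}(G)$, every pair $(E,\mathcal{U})$ with $E \in \mathcal{F}(H)$ already occurs in the infimum defining $\ell_{G}$; hence $\ell_{G} \leq \ell_{H}$ is immediate, and all the work goes into the reverse inequality $\ell_{G} \geq \ell_{H}$.

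For the reverse inequality I would fix an arbitrary $E \in \mathcal{F}(G)$ and $\mathcal{U} \in \mathcal{N}(X)$ and show that the term $\sup_{F \in \mathcal{F}_{+}(X)} \inf_{g \in E} \mu(F,g(F),\mathcal{U})/|F|$ is bounded below by $\ell_{H}$; taking the infimum over all such $(E,\mathcal{U})$ then yields $\ell_{G} \geq \ell_{H}$. The key is to replace each $g \in E$ by a sufficiently close element of $H$ without decreasing the relevant matching numbers. First I would invoke Lemma~\ref{lemma:uniform.star.refinements} to choose a finite uniform covering $\mathcal{V}$ with $\mathcal{U} \preceq^{\ast} \mathcal{V}$ (applying it a second time, if necessary, to absorb the extra star that is built into the uniformity of uniform convergence). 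Then, using that $H$ is dense in $G$ for the topology of uniform convergence, for each $g \in E$ I would pick $h_{g} \in H$ with $h_{g}(x) \in \St(g(x),\mathcal{V})$ for all $x \in X$, and set $E' \defeq \{ h_{g} \mid g \in E \} \in \mathcal{F}(H)$. Note that this choice of $h_{g}$ is uniform in $x$ and hence independent of the set $F$ chosen later.

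The heart of the argument is the pointwise (in $F$) comparison $\mu(F,h_{g}(F),\mathcal{V}) \leq \mu(F,g(F),\mathcal{U})$. I would deduce this from Remark~\ref{remark:bijective.graph.homomorphisms}, applied to $\mathcal{B}_{0} = \mathcal{B}(F,h_{g}(F),\mathcal{V})$ and $\mathcal{B}_{1} = \mathcal{B}(F,g(F),\mathcal{U})$ via the bijections $\id_{F}$ and $h_{g}(x) \mapsto g(x)$ (both well-defined bijections, since $g$ and $h_{g}$ are injective). The relation-preservation hypothesis of the Remark amounts to the implication $h_{g}(x') \in \St(x,\mathcal{V}) \Rightarrow g(x') \in \St(x,\mathcal{U})$: combining $h_{g}(x') \in \St(x,\mathcal{V})$ with the approximation $h_{g}(x') \in \St(g(x'),\mathcal{V})$ places $x$ and $g(x')$ in a common member of $\mathcal{V}^{\ast}$, and since $\mathcal{U} \preceq \mathcal{V}^{\ast}$ this member lies inside some $U \in \mathcal{U}$, whence $g(x') \in \St(x,\mathcal{U})$.

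Granting this inequality for every $g \in E$ and every $F$, I would chain $\inf_{g \in E} \mu(F,g(F),\mathcal{U})/|F| \geq \inf_{h \in E'} \mu(F,h(F),\mathcal{V})/|F|$, take the supremum over $F \in \mathcal{F}_{+}(X)$, and observe that the resulting right-hand side is precisely the $(E',\mathcal{V})$-term of the infimum defining $\ell_{H}$, hence at least $\ell_{H}$. I expect the main obstacle to be the bookkeeping of star-refinements: the uniformity of uniform convergence contributes one star when translating density into the estimate $h_{g}(x) \in \St(g(x),\mathcal{V})$, while converting such an approximation into a graph homomorphism via Remark~\ref{remark:bijective.graph.homomorphisms} consumes another, so one must choose the intermediate coverings (all kept finite by Lemma~\ref{lemma:uniform.star.refinements}) so that the composite refinement is still controlled by the given $\mathcal{U}$.
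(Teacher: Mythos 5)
Your proposal is correct and follows essentially the same route as the paper: one star-refinement $\mathcal{U} \preceq^{\ast} \mathcal{V}$, density used to pick $h_{g} \in H \cap [g,\mathcal{V}]$, and the pointwise comparison $\mu(F,h_{g}(F),\mathcal{V}) \leq \mu(F,g(F),\mathcal{U})$ obtained by composing a $\mathcal{V}$-matching with the $\mathcal{V}$-closeness of $h_{g}$ to $g$. The second star-refinement you hedge about is not needed, since $[g,\mathcal{V}]$ is already a neighbourhood of $g$ in the topology of uniform convergence, so density yields $h_{g}(x) \in \St(g(x),\mathcal{V})$ for all $x$ directly.
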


\begin{proof} Evidently, the left-hand side of the desired equation is bounded from above by the right-hand side, which we denote by $\rho$. In order to prove the converse inequality, let $\epsilon > 0$, $E \in \mathcal{F}(G)$, and $\mathcal{U} \in \mathcal{N}(X)$. By Lemma~\ref{lemma:uniform.star.refinements}, there exists $\mathcal{V} \in \mathcal{N}(X)$ such that $\mathcal{U} \preceq^{\ast} \mathcal{V}$. Since $H$ is dense in $G$, for each $g \in E$ there exists some $\hat{g} \in H \cap [g,\mathcal{V}]$. Let $F \in \mathcal{F}_{+}(X)$ such that $\inf_{g \in E} \mu (F,\hat{g}(F),\mathcal{V}) \geq (\rho -\epsilon)|F|$. Since $\hat{g}(x) \in \St (g(x),\mathcal{V})$ for all $x \in F$ and $g \in E$, we conclude that $\mu (F,g(F),\mathcal{U}) \geq \mu (F,\hat{g}(F),\mathcal{V})$ for every $g \in G$. Hence, \begin{displaymath}
		\inf_{g \in E} \frac{\mu (F,g(F),\mathcal{U})}{|F|} \geq \inf_{g \in E} \frac{\mu (F,\hat{g}(F),\mathcal{V})}{|F|} \geq \rho - \epsilon .
\end{displaymath} This completes the proof. \end{proof}

\begin{cor}\label{corollary:finite.generating.subsets} Let $X$ be a perfect Hausdorff space and let $G$ be a subgroup of $\Aut (X)$. Suppose that $S$ is a finite symmetric subset of $G$ containing the identity map and assume that the group generated by $S$ is dense in $G$ with respect to the topology of uniform convergence. Then $(X,G)$ is amenable if and only if the following holds: for every $\theta \in [0,1)$ and every finite uniform covering $\mathcal{U}$ of $X$, there exists a finite non-empty subset $F\subseteq X$ such that \begin{displaymath}
	\forall g \in S \colon \,  \mu (F,g(F),\mathcal{U}) \geq \theta \vert F \vert .
\end{displaymath} \end{cor}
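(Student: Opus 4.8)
The plan is to deduce this corollary purely by chaining the three preceding results, passing first to the subgroup generated by $S$ and then to the finite set $S$ itself. For any subset $K \subseteq \Aut(X)$, abbreviate
\[
	\rho(K) \defeq \inf_{E \in \mathcal{F}(K)} \inf_{\mathcal{U} \in \mathcal{N}(X)} \sup_{F \in \mathcal{F}_{+}(X)} \inf_{g \in E} \frac{\mu(F,g(F),\mathcal{U})}{|F|},
\]
so that the displayed quantities in Theorem~\ref{theorem:vanishing.matching.number.implies.amenability}, Proposition~\ref{proposition:generating.subsets} and Proposition~\ref{proposition:dense.subgroups} all become instances of $\rho$. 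Since $X$ is perfect and Hausdorff (hence $T_{1}$), Remark~\ref{remark:isolated.points}(2) ensures that every non-empty open subset of $X$ is infinite, so Corollary~\ref{corollary:amenable.perfect.hausdorff.dynamical.systems} applies and tells us that $(X,G)$ is amenable if and only if $\rho(G) = 1$.

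Next I would set $H \defeq \langle S \rangle$, the subgroup of $G$ generated by $S$, so that $H \subseteq G \subseteq \Aut(X)$ and, by hypothesis, $H$ is dense in $G$ with respect to the topology of uniform convergence. Proposition~\ref{proposition:dense.subgroups} then yields $\rho(G) = \rho(H)$. Moreover $S$ is a symmetric generating subset of $H$ containing the identity map, so Proposition~\ref{proposition:generating.subsets}, applied to the dynamical system $(X,H)$, gives that $\rho(H) = 1$ holds if and only if $\inf_{E \in \mathcal{F}(S)} \inf_{\mathcal{U}} \sup_{F} \inf_{g \in E} \mu(F,g(F),\mathcal{U})/|F| = 1$, i.e.\ if and only if $\rho(S) = 1$. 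Combining these equivalences, amenability of $(X,G)$ is equivalent to $\rho(S) = 1$.

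Finally I would unwind $\rho(S) = 1$ into the stated matching condition. Because $S$ is finite, the outer infimum $\inf_{E \in \mathcal{F}(S)}$ is attained at $E = S$ (a larger $E$ only decreases $\inf_{g \in E}$, hence the corresponding supremum), so $\rho(S) = \inf_{\mathcal{U} \in \mathcal{N}(X)} \sup_{F} \inf_{g \in S} \mu(F,g(F),\mathcal{U})/|F|$. Since $\mu(F,g(F),\mathcal{U}) \leq |F|$ always holds, each summand is bounded by $1$, and therefore $\rho(S) = 1$ forces $\sup_{F} \inf_{g \in S} \mu(F,g(F),\mathcal{U})/|F| = 1$ for every single $\mathcal{U} \in \mathcal{N}(X)$. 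Unpacking the supremum, this says precisely that for every finite uniform covering $\mathcal{U}$ and every $\theta \in [0,1)$ there is some $F \in \mathcal{F}_{+}(X)$ with $\mu(F,g(F),\mathcal{U}) \geq \theta|F|$ for all $g \in S$, which is exactly the asserted condition; the converse direction of this last step is immediate from the same identity, closing the chain of equivalences.

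I expect the only real subtlety — more a matter of bookkeeping than a genuine obstacle — to be keeping track of the three distinct index sets over which the infima range. The set $S$ generates $H$, while $H$ is merely dense in $G$, so one cannot feed $S$ directly into Proposition~\ref{proposition:generating.subsets} with $G$ in place of $H$; the detour through $H$ is what lets the density hypothesis (via Proposition~\ref{proposition:dense.subgroups}) and the generating hypothesis (via Proposition~\ref{proposition:generating.subsets}) play their separate roles. Once the notation $\rho(\cdot)$ is fixed, every remaining step is a direct citation.
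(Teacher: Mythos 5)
Your proof is correct and follows exactly the route the paper intends: chaining Corollary~\ref{corollary:amenable.perfect.hausdorff.dynamical.systems} (via Remark~\ref{remark:isolated.points}), Proposition~\ref{proposition:dense.subgroups} applied to $H = \langle S\rangle$, and Proposition~\ref{proposition:generating.subsets} applied to $(X,H)$ --- the same chain the paper spells out for the group-theoretic analogue in Corollary~\ref{corollary:generators.of.dense.subgroups}. The final unwinding of $\rho(S)=1$, using that the infimum over $E \in \mathcal{F}(S)$ is attained at $E=S$ and that each term is bounded by $1$, is also right.
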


\section{Matchings in topological groups}\label{section:matchings.in.topological.groups}

In this section we have a closer look at topological matchings in topological groups. This will lead to several new characterizations of amenability for Hausdorff topological groups.

\begin{thm}\label{theorem:amenable.groups} A Hausdorff topological group $G$ is amenable if and only if the following holds: for every $\theta \in [0,1)$, every finite subset $E \subseteq G$, and every finite uniform covering $\mathcal{U}$ of $G_{r}$, there exists a finite non-empty subset $F \subseteq G$ such that \begin{displaymath}
	\forall g \in E \colon \, \mu (F,gF,\mathcal{U}) \geq \theta \vert F \vert .
\end{displaymath} \end{thm}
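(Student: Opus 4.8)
The plan is to read off the theorem from the dynamical-system results of Section~\ref{section:matchings.in.dynamical.systems}, applied to the system $(G_{r},\lambda_{G}(G))$, where $\lambda_{G}(g)(x)=gx$. Since $\lambda_{G}$ is an injective homomorphism (so finite subsets $E\subseteq G$ correspond bijectively to finite subsets of $\lambda_{G}(G)$) and $\lambda_{G}(g)(F)=gF$, the matching numbers $\mu(F,g(F),\mathcal{U})$ occurring there become exactly the $\mu(F,gF,\mathcal{U})$ of the statement. Moreover, because $\mu(F,gF,\mathcal{U})\leq|F|$ always, the displayed condition of the theorem (for every $\theta\in[0,1)$, every finite $E$, and every $\mathcal{U}\in\mathcal{N}(G_{r})$ there is $F$ with $\mu(F,gF,\mathcal{U})\geq\theta|F|$ for all $g\in E$) is equivalent to the assertion
\[
	\inf_{E \in \mathcal{F}(G)} \inf_{\mathcal{U} \in \mathcal{N}(G_{r})} \sup_{F \in \mathcal{F}_{+}(G)} \inf_{g \in E} \frac{\mu (F,gF,\mathcal{U})}{|F|} = 1 .
\]
Thus the theorem reduces to showing that $G$ is amenable if and only if this quantity equals $1$.

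For the implication from matchings to amenability I would simply invoke Theorem~\ref{theorem:vanishing.matching.number.implies.amenability} with $(X,G)=(G_{r},\lambda_{G}(G))$. That theorem imposes no regularity hypothesis on $X$, and by definition amenability of $(G_{r},\lambda_{G}(G))$ is precisely amenability of $G$. Hence this direction holds for every Hausdorff topological group with no case distinction.

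For the converse I would like to apply Theorem~\ref{theorem:amenability.implies.vanishing.matching.number}, whose only extra hypothesis is that every non-empty open subset of $X=G_{r}$ be infinite. Here lies the one genuine obstacle: this requirement can fail, and it fails precisely when $G$ is discrete. I would resolve the dichotomy using that a topological group is homogeneous. If $G$ is not discrete, then by Remark~\ref{remark:isolated.points}(1) the homogeneous space $G_{r}$ is perfect, and since $G_{r}$ is Hausdorff (in particular $T_{1}$), Remark~\ref{remark:isolated.points}(2) shows that every non-empty open subset of $G_{r}$ is infinite; Theorem~\ref{theorem:amenability.implies.vanishing.matching.number} then yields the infimum identity above directly.

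If instead $G$ is discrete, then ${\rm UC}_{b}(G_{r})=\ell^{\infty}(G)$, so amenability of $G$ is the classical one and Theorem~\ref{theorem:folner} applies. Given $\theta\in[0,1)$, $E\in\mathcal{F}(G)$ and $\mathcal{U}\in\mathcal{N}(G_{r})$, I would choose a F\o lner set $F\in\mathcal{F}_{+}(G)$ with $|F\cap gF|\geq\theta|F|$ for all $g\in E$; since $\mathcal{U}$ covers $G$, the identity map on $F\cap gF$ is a matching in $\mathcal{B}(F,gF,\mathcal{U})$, so Remark~\ref{remark:estimate.for.matching.number} gives $\mu(F,gF,\mathcal{U})\geq|F\cap gF|\geq\theta|F|$, as required. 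The main work is therefore isolating the discrete case: there the density-of-means argument underlying Theorem~\ref{theorem:amenability.implies.vanishing.matching.number} breaks down and must be replaced by the classical F\o lner theorem together with the trivial bound $|F\cap gF|\leq\mu(F,gF,\mathcal{U})$.
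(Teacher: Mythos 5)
Your proposal is correct and follows essentially the same route as the paper: the backward implication via Theorem~\ref{theorem:vanishing.matching.number.implies.amenability}, and the forward implication by splitting into the discrete case (F\o lner's theorem plus the bound $\mu(F,gF,\mathcal{U})\geq|F\cap gF|$ of Remark~\ref{remark:estimate.for.matching.number}) and the non-discrete case (Remark~\ref{remark:isolated.points} to get perfectness, then the dynamical-system characterization, which the paper packages as Corollary~\ref{corollary:amenable.perfect.hausdorff.dynamical.systems}).
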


\begin{proof} ($\Longleftarrow $) This is due to Theorem~\ref{theorem:vanishing.matching.number.implies.amenability}.

($\Longrightarrow $) Suppose that $G$ is amenable. If $G$ is discrete, then Theorem~\ref{theorem:folner} asserts the following: for every $\theta \in [0,1)$ and every finite subset $E\subseteq G$, there exists a finite non-empty subset $F \subseteq G$ such that $\vert F \cap gF \vert \geq \vert F \vert$ for all $g \in E$, and thus \begin{displaymath}
	\mu (F,gF,\mathcal{U}) \stackrel{\ref{remark:estimate.for.matching.number}}{\geq} \vert F \cap gF \vert \geq \vert F \vert
\end{displaymath} for all $g \in E$ and every covering $\mathcal{U}$ of $G$. Otherwise, if $G$ is not discrete, then $G$ is perfect by Remark~\ref{remark:isolated.points}, and hence the conclusion follows from Corollary~\ref{corollary:amenable.perfect.hausdorff.dynamical.systems}. \end{proof}

\begin{cor}\label{corollary:generators.of.dense.subgroups} Let $G$ be a Hausdorff topological group and let $S \subseteq G$ be a symmetric subset containing the neutral element and generating a dense subgroup of $G$. Then $G$ is amenable if and only if the following holds: for every $\theta \in [0,1)$, every finite subset $E \subseteq S$, and every finite uniform covering $\mathcal{U}$ of $G_{r}$, there is a finite non-empty subset $F \subseteq G$ with \begin{displaymath}
		\forall g \in E \colon \, \mu (F,gF,\mathcal{U}) \geq \theta \vert F \vert .
	\end{displaymath} \end{cor}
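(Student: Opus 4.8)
The plan is to deduce this statement from Theorem~\ref{theorem:amenable.groups} by feeding the dynamical system $(G_{r},\lambda_{G}(G))$ into Propositions~\ref{proposition:generating.subsets} and~\ref{proposition:dense.subgroups}. Throughout, I would write $\Lambda \defeq \lambda_{G}(G) \leq \Aut(G_{r})$ and $H \defeq \langle \lambda_{G}(S) \rangle \leq \Lambda$. Since $\lambda_{G}$ is an injective group homomorphism, the set $\lambda_{G}(S)$ is symmetric, contains the identity map $\id$, and generates $H$; moreover $H = \lambda_{G}(\langle S \rangle)$.

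First I would rephrase both sides as statements about matching numbers. As $gF = \lambda_{G}(g)(F)$ for $g \in G$ and $F \subseteq G$, and as $\lambda_{G}$ restricts to a bijection $\mathcal{F}(S) \to \mathcal{F}(\lambda_{G}(S))$, the matching condition in the corollary is equivalent to
\[
	\inf_{E \in \mathcal{F}(\lambda_{G}(S))} \inf_{\mathcal{U} \in \mathcal{N}(G_{r})} \sup_{F \in \mathcal{F}_{+}(G)} \inf_{g \in E} \frac{\mu(F,g(F),\mathcal{U})}{|F|} = 1 ,
\]
while, by Theorem~\ref{theorem:amenable.groups} (and the same translation via $\lambda_{G}$, now using the bijection $\mathcal{F}(G) \to \mathcal{F}(\Lambda)$), amenability of $G$ is equivalent to the analogous equality with the outer infimum taken over $E \in \mathcal{F}(\Lambda)$.

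It then remains to connect these two quantities, and this is where the two propositions enter. Continuity of $\lambda_{G} \colon G \to \Aut(G_{r})$ (noted at the end of Section~\ref{section:uniform.spaces}) shows that the image $H = \lambda_{G}(\langle S \rangle)$ of the dense subgroup $\langle S \rangle \leq G$ is dense in $\Lambda = \lambda_{G}(G)$ with respect to the topology of uniform convergence, since the image of a dense set under a continuous map is dense in the image. Hence Proposition~\ref{proposition:dense.subgroups}, applied to $H \subseteq \Lambda \subseteq \Aut(G_{r})$, identifies the matching quantity over $\mathcal{F}(\Lambda)$ with the one over $\mathcal{F}(H)$. Next, $\lambda_{G}(S)$ is a symmetric generating subset of $H$ containing $\id$, so Proposition~\ref{proposition:generating.subsets}, applied to the dynamical system $(G_{r},H)$, shows that the matching quantity over $\mathcal{F}(H)$ equals $1$ if and only if the matching quantity over $\mathcal{F}(\lambda_{G}(S))$ equals $1$. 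Chaining these equivalences with the two reformulations above yields the corollary.

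The genuinely substantive content has already been carried out in Theorem~\ref{theorem:amenable.groups} and in Propositions~\ref{proposition:generating.subsets} and~\ref{proposition:dense.subgroups}, so the remaining work is essentially bookkeeping; there is no real obstacle. The only point needing a short argument is the density of $H$ in $\Lambda$, and the main thing to keep track of is the harmless but repeated passage between left translations $g \mapsto gF$ in $G$ and the associated maps $\lambda_{G}(g) \in \Aut(G_{r})$, together with the fact that $\lambda_{G}$ induces the bijections $\mathcal{F}(G) \to \mathcal{F}(\Lambda)$ and $\mathcal{F}(S) \to \mathcal{F}(\lambda_{G}(S))$ used in the reformulations. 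In particular, note that perfectness of $G_{r}$ plays no role here, as the delicate discrete-versus-non-discrete distinction has already been absorbed into Theorem~\ref{theorem:amenable.groups}.
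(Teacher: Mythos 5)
Your proposal is correct and follows essentially the same route as the paper: apply Theorem~\ref{theorem:amenable.groups}, then Proposition~\ref{proposition:dense.subgroups} (using continuity of $\lambda_{G}$ to get density of $\lambda_{G}(\langle S\rangle)$ in $\lambda_{G}(G)$), then Proposition~\ref{proposition:generating.subsets}, chaining the three reformulations of the matching quantity. Your version is merely more explicit about the bookkeeping of passing through $\lambda_{G}$, which the paper leaves implicit.
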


\begin{proof} Denote by $H$ the subgroup of $G$ generated by $S$. Since the homomorphism $\lambda_{G} \colon G \to \Aut (G_{r})$ is continuous, the subgroup $\lambda_{G}(H)$ is dense in $\lambda_{G}(G)$ with respect to the topology of uniform convergence. Accordingly, \begin{align*}
	G \textnormal{ amenable } \ &\stackrel{\ref{theorem:amenable.groups}}{\Longleftrightarrow } \ \inf_{E \in \mathcal{F}(G)} \inf_{\mathcal{U} \in \mathcal{N}(G_{r})} \sup_{F \in \mathcal{F}_{+}(G)} \inf_{g \in E} \frac{\mu (F,gF,\mathcal{U})}{|F|} = 1 \\
	& \stackrel{\ref{proposition:dense.subgroups}}{\Longleftrightarrow } \ \inf_{E \in \mathcal{F}(H)} \inf_{\mathcal{U} \in \mathcal{N}(G_{r})} \sup_{F \in \mathcal{F}_{+}(G)} \inf_{g \in E} \frac{\mu (F,gF,\mathcal{U})}{|F|} = 1 \\
	&\stackrel{\ref{proposition:generating.subsets}}{\Longleftrightarrow } \ \inf_{E \in \mathcal{F}(S)} \inf_{\mathcal{U} \in \mathcal{N}(G_{r})} \sup_{F \in \mathcal{F}_{+}(G)} \inf_{g \in E} \frac{\mu (F,gF,\mathcal{U})}{|F|} = 1 .\qedhere
\end{align*} \end{proof}

So far so good. We are now coming to material that prepares the study of a relationship with continuous Ramsey theory in the last section. Our next objective is to significantly strengthen Theorem~\ref{theorem:amenable.groups}. As it turns out, amenability of topological groups can be characterized in terms of matching properties involving only two-element uniform coverings (see Theorem~\ref{theorem:two.element.coverings} and Corollary~\ref{corollary:two.element.coverings}). This observation generalizes a recent result by Moore \cite{moore} for discrete groups.

In order state and prove the desired Theorem~\ref{theorem:two.element.coverings}, we need to agree on some additional terminology. To this end, let $G$ be a topological group and let $f \colon G \to \mathbb{R}$. Given an identity neighborhood $U$ in $G$ and any $\epsilon > 0$, we say that $f$ is \emph{$(U,\epsilon)$-uniformly continuous} if $\diam f(Ux) \leq \epsilon$ for each $x \in G$. Note that $f$ is a uniformly continuous map from $G_{r}$ to $\mathbb{R}$ if and only if for every $\epsilon > 0$, there exists an identity neighborhood $U$ in $G$ such that $f$ is $(U,\epsilon)$-uniformly continuous. Of course, this concept refers to the right uniformity of $G$ and may be defined analogously with respect to the corresponding left uniformity. However, we shall only need it as given here.

Now, let us note the following observation. 

\begin{lem}\label{lemma:relaxed.functions} Let $G$  be a topological group and $U$ an identity neighborhood in $G$. Suppose that $f \in {\rm UC}_{b}(G_{r})$ and $\epsilon > 0$. If $f$ is $(U,\epsilon)$-uniformly continuous, then $f_{\nu}$ is $(U,\epsilon)$-uniformly continuous for every mean $\nu \in M(G_{r})$. \end{lem}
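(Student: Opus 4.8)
The plan is to unwind the definition of $f_{\nu}$ and reduce the claim to a single pointwise estimate that is controlled directly by the hypothesis on $f$. Recall that $f_{\nu}(g) = \nu(f \circ \lambda_{G}(g))$, where $\lambda_{G}(g)$ denotes the left translation $y \mapsto gy$, and that $f_{\nu} \in {\rm UC}_{b}(G_{r})$ by the observation recorded just before the statement. To prove that $f_{\nu}$ is $(U,\epsilon)$-uniformly continuous, I must show that $\diam f_{\nu}(Ux) \leq \epsilon$ for every $x \in G$, that is, $|f_{\nu}(g) - f_{\nu}(g')| \leq \epsilon$ for all $g, g' \in Ux$.

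First I would linearize. Since $\nu$ is a mean, it is linear and satisfies $|\nu(h)| \leq \Vert h \Vert_{\infty}$ for every $h \in {\rm UC}_{b}(G_{r})$ (because $\inf h \leq \nu(h) \leq \sup h$). Applying this to $h = f \circ \lambda_{G}(g) - f \circ \lambda_{G}(g')$ yields
\[ |f_{\nu}(g) - f_{\nu}(g')| = |\nu(h)| \leq \sup_{y \in G} |f(gy) - f(g'y)| , \]
so it suffices to bound the right-hand supremum by $\epsilon$.

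Next comes the pointwise estimate. Fix $g, g' \in Ux$ and write $g = ux$, $g' = u'x$ with $u, u' \in U$. For an arbitrary $y \in G$, set $z \defeq xy$; then $gy = u(xy) = uz$ and $g'y = u'(xy) = u'z$, so both $gy$ and $g'y$ lie in $Uz$. The hypothesis that $f$ is $(U,\epsilon)$-uniformly continuous says exactly that $\diam f(Uz) \leq \epsilon$, whence $|f(gy) - f(g'y)| \leq \epsilon$. As $y$ was arbitrary, the supremum above is at most $\epsilon$, which finishes the argument.

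The only point demanding care is the bookkeeping with the \emph{right} uniformity: the $(U,\epsilon)$-condition controls left $U$-translates $Ux$, and it is precisely because left and right multiplication associate (so that $gy = u \cdot (xy)$ keeps the factor $u \in U$ on the left) that the hypothesis applies to the shifted argument $z = xy$. Were one working with the left uniformity instead, the translate would land on the wrong side and this step would break down, so the asymmetry of the setup is essential rather than cosmetic.
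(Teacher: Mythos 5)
Your proof is correct, but it takes a genuinely different and more direct route than the paper. The paper argues indirectly: it shows that the set $N$ of means $\nu$ for which $f_{\nu}$ is $(U,\epsilon)$-uniformly continuous is weak-* closed and convex, verifies membership only for the point evaluations $\nu_{g}$ (where $f_{\nu_{g}}(x) = f(xg)$ and the right translation $x \mapsto xg$ carries $Ux$ onto $U(xg)$), and then invokes the weak-* density of convex combinations of point evaluations (Theorem~\ref{theorem:general.means}) to conclude $N = M(G_{r})$. You instead handle an arbitrary mean in one stroke: linearity together with the inequality $\inf h \leq \nu(h) \leq \sup h$ gives $\lvert f_{\nu}(g) - f_{\nu}(g') \rvert \leq \sup_{y} \lvert f(gy) - f(g'y) \rvert$, and the pointwise bound follows because $g, g' \in Ux$ forces $gy, g'y \in U(xy)$, so the hypothesis $\diam f(U(xy)) \leq \epsilon$ applies. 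Your argument needs no approximation, no topology on $M(G_{r})$, and no convexity; it only uses that a mean is a linear functional of norm at most one on ${\rm UC}_{b}(G_{r})$. It also sidesteps a small imprecision in the paper's write-up, where the verification for $\nu_{g}$ is phrased via the condition $xy^{-1} \in U$ rather than the slightly stronger condition $x,y \in Uz$ that the definition of $(U,\epsilon)$-uniform continuity actually requires. Your closing remark about why the factor from $U$ stays on the left under right multiplication of the argument is exactly the right observation; it is the same structural point the paper exploits when checking the point evaluations.
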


\begin{proof} First let us observe that \begin{align*}
	N &\defeq \{ \nu \in M(G_{r}) \mid f_{\nu} \text{ is $(U,\epsilon)$-uniformly continuous} \} \\
	&= \bigcap_{x \in G, \, y \in Ux} \{ \nu \in M(G_{r}) \mid \vert f_{\nu}(x) - f_{\nu}(y) \vert \leq \epsilon \}
\end{align*} is closed in $M(G_{r})$ with respect to the weak-* topology. We argue that $N$ is convex. To this end, let $\nu, \nu' \in N$ and $\alpha \in [0,1]$, and consider the mean $\mu \defeq \alpha \nu + (1-\alpha)\nu'$ on $G$. Note that \begin{displaymath}
	f_{\mu}(x) = \mu (f \circ \lambda_{G}(x)) = \alpha \nu (f \circ \lambda_{G}(x)) + (1-\alpha)\nu'(f \circ \lambda_{G}(x)) = \alpha f_{\nu}(x) + (1-\alpha)f_{\nu'}(x)
\end{displaymath} for all $x \in G$. Therefore, \begin{displaymath}
	\vert f_{\mu}(x) - f_{\mu}(y) \vert \leq \alpha \vert f_{\nu}(x) - f_{\nu}(y) \vert + (1-\alpha )\vert f_{\nu'}(x) - f_{\nu'}(y) \vert \leq \epsilon
\end{displaymath} for all $x,y \in G$ with $xy^{-1} \in U$, which means that $\alpha \mu + (1-\alpha)\nu \in N$. This shows that $N$ is convex. Note that $f_{\nu_{g}}(x) = \nu_{g}(f \circ \lambda_{G}(x)) = f(xg)$ for all $g,x \in G$. Hence, if $g \in G$, then the implication \begin{displaymath}
	xy^{-1} \in U \ \Longrightarrow \ xg(yg)^{-1} \in U \ \Longrightarrow \ \vert f_{\nu_{g}}(x) - f_{\nu_{g}}(y) \vert = \vert f(xg) - f(yg) \vert \leq \epsilon
\end{displaymath} holds for all $x,y \in G$, and thus $f_{\nu_{g}}$ is $(U,\epsilon)$-uniformly continuous. Of course, this means that $\{ \nu_{g} \mid g \in G \} \subseteq N$. Consequently, $M(G_{r}) = N$ by Theorem~\ref{theorem:general.means}. \end{proof}

Now everything is prepared to state and prove the aforementioned result. The interesting part of the theorem is that conditions (2)-(4) need to hold just for a \emph{single} $k$ and it is not necessary to assume that they hold for \emph{all} $k$. Moreover, in conditions (4)-(6) it is interesting to note that $S$ can be chosen uniform, just depending on the set $E \subset G$ and the neighborhood $U$. We will heavily use these improvements in the later parts of the paper. The structure of the proof and also the main ideas go back to the original proof of Moore \cite{moore} in the discrete case -- however, some arguments might be somewhat streamlined and some others needed a careful adaption to the topological case.

Some generalization of Moore's work on discrete groups to the case of polish groups appeared in work of Ka{\"i}chouh \cite{kaichouh}, however our approach captures more aspects and also seems to be better suited for the Ramsey theoretic applications that one might have in mind. We obtain Ka{\"i}chouh's results as a straightforward corollary -- once we proved our main theorem.

\begin{thm}\label{theorem:two.element.coverings} Let $G$ be a Hausdorff topological group. For every natural number $k \geq 2$ and every $\theta \in (0,1)$, the following are equivalent: \begin{enumerate}
	\item[\emph{(1)}] $G$ is amenable.
	\item[\emph{(2)}] For every finite subset $E \subseteq G$ and every finite uniform covering $\mathcal{U}$ of $G_{r}$, there exists a finite non-empty subset $F \subseteq G$ such that \begin{displaymath}
		\forall g \in E \colon \, \mu (F,gF,\mathcal{U}) \geq \theta \vert F \vert .
	\end{displaymath}
	\item[\emph{(3)}] For every finite subset $E \subseteq G$ and every uniform covering $\mathcal{U}$ of $G_{r}$ with $\vert \mathcal{U} \vert \leq k$, there is a finite non-empty subset $F \subseteq G$ such that \begin{displaymath}
		\forall g,h \in E \colon \, \mu (gF,hF,\mathcal{U}) \geq \theta \vert F \vert .
	\end{displaymath}
	\item[\emph{(4)}] For every finite subset $E \subseteq G$ and every identity neighborhood $U$ in $G$, there is a finite subset $S \subseteq G$ such that, for every $U$-uniform covering $\mathcal{U}$ of $G_{r}$ with $\vert \mathcal{U} \vert \leq k$, there exists a non-empty subset $F \subseteq S$ such that \begin{displaymath}
		\forall g,h \in E \colon \, \mu (gF,hF,\mathcal{U}) \geq \theta \vert F \vert .
	\end{displaymath}
	\item[\emph{(5)}] There exists $\epsilon \in (0,1)$ such that for every finite subset $E \subseteq G$ and every identity neighborhood $U$ in $G$, there exists a finite subset $S \subseteq G$ such that for every $(U,\tfrac{1}{8})$-uniformly continuous $f \in {\rm UC}_{b}(G_{r})$ with $f(S) \subseteq [0,1]$, there is a non-empty subset $F \subseteq S$ such that $EF \subseteq S$ and \begin{displaymath}
		\forall g,h \in E \colon \, \vert \nu_{gF} (f) - \nu_{hF}(f) \vert \leq \epsilon .
	\end{displaymath}
	\item[\emph{(6)}] For every $\epsilon > 0$, every finite subset $E \subseteq G$, and every identity neighborhood $U$ in $G$, there exists a finite subset $S \subseteq G$ such that for every $(U,\tfrac{1}{8})$-uniformly continuous $f \in {\rm UC}_{b}(G_{r})$ with $f(S) \subseteq [0,1]$, there exists $\delta \in \Delta (G)$ with $(\spt \delta) \cup E(\spt \delta) \subseteq S$ such that \begin{displaymath}
		\forall g,h \in E \colon \, \vert (\nu_{g}\nu_{\delta})(f) - (\nu_{h}\nu_{\delta})(f) \vert \leq \epsilon .
	\end{displaymath}
\end{enumerate} \end{thm}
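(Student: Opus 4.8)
The plan is to prove the six conditions equivalent by establishing the cyclic chain $(1)\Rightarrow(2)\Rightarrow(3)\Rightarrow(4)\Rightarrow(5)\Rightarrow(6)\Rightarrow(1)$, with $k\ge 2$ and $\theta\in(0,1)$ fixed throughout. The implication $(1)\Rightarrow(2)$ is immediate from Theorem~\ref{theorem:amenable.groups}, which gives the matching condition for every $\theta\in[0,1)$, in particular the prescribed one. For $(6)\Rightarrow(1)$ I would run a finite-intersection argument of the kind used in Theorem~\ref{theorem:vanishing.matching.number.implies.amenability}: assuming $e\in E$ and taking $h=e$ in (6), the mean $\nu_{\delta}$ satisfies $\lvert\nu_{\delta}(f\circ\lambda_{G}(g))-\nu_{\delta}(f)\rvert\le\epsilon$ for all $g\in E$, so $\nu_{\delta}$ is an approximately invariant mean. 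Scaling an arbitrary $f\in{\rm UC}_{b}(G_{r})$ into $[0,1]$ and, by its uniform continuity, choosing an identity neighbourhood $U$ making it $(U,\tfrac18)$-uniformly continuous, the sets of $\epsilon$-invariant means are weak-* closed; using a common $U$ for finite families of functions (Lemma~\ref{lemma:uniform.tiles}) together with the semigroup structure on $M(G_{r})$ from Section~\ref{section:amenability}, these sets have the finite intersection property, and weak-* compactness of $M(G_{r})$ (Theorem~\ref{theorem:general.means}) yields a genuine invariant mean.

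For $(2)\Rightarrow(3)$ I would symmetrise using that each left translation $\lambda_{G}(g)$ is an automorphism of the uniform space $G_{r}$, so that $\mu(gF,hF,\mathcal{U})=\mu(F,g^{-1}hF,g^{-1}\mathcal{U})$ for $g,h\in E$. Applying (2) to the finite set $E^{-1}E$ and to the finite uniform covering $\mathcal{W}\defeq\bigvee_{g\in E} g^{-1}\mathcal{U}$ produces a single $F$ with $\mu(F,g'F,\mathcal{W})\ge\theta\lvert F\rvert$ for all $g'\in E^{-1}E$; since $\mathcal{W}$ refines each $g^{-1}\mathcal{U}$ and passing to a refinement can only decrease matching numbers, the required bound follows (note this uses no restriction on $\lvert\mathcal{U}\rvert$). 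The step $(3)\Rightarrow(4)$ is where finiteness of $k$ is essential: every $U$-uniform covering with at most $k$ members can be refined to a partition into at most $k$ classes, each contained in some $Ux$, and this refinement only decreases matching numbers, so the property for the partition implies it for the covering. Such partitions are encoded as points of the compact space $\{1,\dots,k\}^{G}$, and for a fixed finite $F$ the condition $\forall g,h\in E\colon\mu(gF,hF,\cdot)\ge\theta\lvert F\rvert$ depends only on the colours on the finite set $EF$, hence is clopen. By (3) these clopen sets cover $\{1,\dots,k\}^{G}$, compactness gives a finite subcover $F_{1},\dots,F_{N}$, and $S\defeq\bigcup_{i}(F_{i}\cup EF_{i})$ is the required uniform witness.

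The passage $(4)\Rightarrow(5)$ rests on the observation that if $f$ is $(U,\tfrac18)$-uniformly continuous then $f$ varies by at most $\tfrac18$ on every member of a $U$-uniform covering; hence a $\theta$-matching between $gF$ and $hF$ forces $\lvert\nu_{gF}(f)-\nu_{hF}(f)\rvert\le\tfrac18+2(1-\theta)$, by exactly the estimate already carried out in the proof of Theorem~\ref{theorem:vanishing.matching.number.implies.amenability} (each matched pair contributes at most $\tfrac18$, and the at most $(1-\theta)\lvert F\rvert$ unmatched points on either side contribute at most $1$ each). This yields an admissible $\epsilon\in(0,1)$ once the matching constant is close to $1$.

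The genuine content of the theorem — and the step I expect to be the main obstacle — is the self-improvement showing that a \emph{single} constant already suffices, i.e. that condition (5) for one $\epsilon<1$ upgrades to condition (6) for every $\epsilon'>0$ (this is also what ultimately reconciles small values of $\theta$, and it is the topological analogue of Moore's discrete result \cite{moore}). My strategy would be to iterate: given $f$ and the set $F$ supplied by (5), the averaged function $f_{\nu_{F}}$ again lies in $[0,1]$ and is again $(U,\tfrac18)$-uniformly continuous by Lemma~\ref{lemma:relaxed.functions}, so (5) may be re-applied, and the successive averages are organised, through the convolution homomorphism $\Delta(G)\to M(G_{r})$ of Section~\ref{section:amenability}, into one $\delta\in\Delta(G)$ with $(\spt\delta)\cup E(\spt\delta)\subseteq S$. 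The delicate point — and the reason the bare iteration does not obviously close the argument — is that a single application only lowers the oscillation over $E$ to the \emph{absolute} value $\epsilon$ rather than contracting it proportionally, so that naive reapplication stalls at $\epsilon$. Obtaining genuine geometric decay to an arbitrary $\epsilon'$ requires exploiting the slack between $\tfrac18$ and $1$ in the modulus of continuity, together with the freedom to pass to finer identity neighbourhoods (using that each iterate, being in ${\rm UC}_{b}(G_{r})$, is highly uniformly continuous), so that suitably rescaled iterates remain admissible inputs for (5); controlling the supports so that $(\spt\delta)\cup E(\spt\delta)\subseteq S$ simultaneously is what makes this amplification the crux of the proof.
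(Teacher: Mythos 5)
Your global architecture coincides with the paper's: the cycle $(1)\Rightarrow(2)\Rightarrow(3)\Rightarrow(4)\Rightarrow(5)\Rightarrow(6)\Rightarrow(1)$, with $(1)\Rightarrow(2)$ from Theorem~\ref{theorem:amenable.groups}, symmetrisation via $E^{-1}E$ and $\bigvee_{g\in E}g^{-1}(\mathcal{U})$ for $(2)\Rightarrow(3)$, a compactness argument for $(3)\Rightarrow(4)$, and a mean-theoretic compactness argument for the return to $(1)$. However, two steps contain genuine gaps. First, in $(4)\Rightarrow(5)$ your key estimate rests on the claim that a $(U,\tfrac18)$-uniformly continuous $f$ varies by at most $\tfrac18$ on every member of a $U$-uniform covering. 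This is false: in the paper's conventions a $U$-uniform covering $\mathcal{U}$ satisfies $\mathcal{U}\preceq\{Ux\mid x\in G\}$, i.e.\ its members \emph{contain} translates $Ux$ and may be as large as $G$ itself, so the oscillation of $f$ on a member is only bounded by $1$. The role of $(U,\tfrac18)$-uniform continuity is different: it guarantees that the two level sets $f^{-1}((-\infty,\tfrac23))$ and $f^{-1}((\tfrac13,\infty))$ form a $U$-uniform covering (each $Ux$ is contained in one of them), and on each of these $f$ restricted to $S$ oscillates by at most $\tfrac23$, which gives $\vert\nu_{gF}(f)-\nu_{hF}(f)\vert\le 1-\tfrac{\theta}{3}\eqdef\epsilon<1$. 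Your bound $\tfrac18+2(1-\theta)$, besides resting on the false premise, exceeds $1$ whenever $\theta\le\tfrac{9}{16}$, so it cannot deliver an $\epsilon\in(0,1)$ for the arbitrary fixed $\theta\in(0,1)$ of the theorem.

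Second, you explicitly leave open the amplification $(5)\Rightarrow(6)$, which is indeed the heart of the theorem; identifying the obstacle is not the same as overcoming it. The missing mechanism is an affine renormalisation: having chosen $F_{n-1},\dots,F_{i+1}$ so that the averaged function $f_{\nu_{F_{i+1}}\cdots\nu_{F_{n-1}}}$ has oscillation at most $\epsilon^{n-i-1}$ over $E_{i+1}$, one sets $f_{i}\defeq\epsilon^{i+1-n}\bigl(f_{\nu_{F_{i+1}}\cdots\nu_{F_{n-1}}}-\min_{E_{i+1}}f_{\nu_{F_{i+1}}\cdots\nu_{F_{n-1}}}\bigr)$; this again takes values in $[0,1]$ on the relevant finite set and is $(U_{i},\tfrac18)$-uniformly continuous provided one prepared $U_{i}$ in advance so that $f$ is $(U_{i},\tfrac18\epsilon^{n-i-1})$-uniformly continuous (Lemma~\ref{lemma:relaxed.functions} transfers this to the averages). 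Applying (5) to $f_{i}$ and undoing the rescaling contracts the oscillation by the factor $\epsilon$ at each step, which is exactly the geometric decay you observe cannot be obtained from bare iteration. Two smaller repairs: in $(3)\Rightarrow(4)$ the space $\{1,\dots,k\}^{G}$ is the wrong parameter space, since an arbitrary colouring need not be a $U$-uniform covering (so (3) does not make your clopen sets cover the cube) and disjointifying a $U$-uniform covering can destroy $U$-uniformity; one should instead compactify the collection of $k$-tuples of subsets via their traces on finite sets and restrict to the closed subspace of $U$-uniform coverings, which is the paper's argument read contrapositively. And in $(6)\Rightarrow(1)$, condition (6) controls a single function, so the finite intersection property requires building one $\delta=\delta_{1}\cdots\delta_{n}$ by downward recursion and checking that the left factors $\nu_{\delta_{1}\cdots\delta_{i-1}}$, supported in $E_{i-1}$, preserve the almost-invariance already secured for $f_{i}$ --- the semigroup structure you invoke is the right tool, but this verification is where the work lies.
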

	
\begin{proof} From Theorem~\ref{theorem:amenable.groups}, we already know that (1) implies (2). The remaining part of the proof proceeds as follows: (2)$\Longrightarrow$(3)$\Longrightarrow$(4)$\Longrightarrow$(5)$\Longrightarrow$(1).
		
	(2)$\Longrightarrow$(3). Consider a finite subset $E \subseteq G$ as well as some finite uniform covering $\mathcal{U}$ of $G_{r}$. Then $\mathcal{V} \defeq \bigvee_{g \in E} g^{-1}(\mathcal{U})$ is a finite uniform covering of $G_{r}$. By (2), there exists a finite non-empty subset $F \subseteq G$ such that $\mu (F,gF,\mathcal{V}) \geq \theta \vert F \vert$ for every $g \in E^{-1}E$. Now, if $g,h \in E$, then $\mathcal{V}$ refines $g^{-1}(\mathcal{U})$, and thus \begin{displaymath}
		\mu (gF,hF,\mathcal{U}) = \mu (F,g^{-1}hF,g^{-1}(\mathcal{U})) \geq \mu (F,g^{-1}hF,g^{-1}(\mathcal{V})) \geq \theta \vert F \vert .
	\end{displaymath}
		
	(2)$\Longrightarrow$(3). Our proof proceeds by contradiction. So, assume that (3) does not hold for some finite subset $E \subseteq G$ and an identity neighborhood $U$ in $G$. That is, for every finite subset $S \subseteq G$, there exists a $U$-uniform covering $\mathcal{U}$ of $G$ such that $\vert \mathcal{U} \vert \leq k$ and \begin{displaymath}
		\forall F \subseteq S , \, F \ne \emptyset , \, \exists g,h \in E \colon \, \mu (gF,hF,\mathcal{U}) < \theta \vert F \vert .
	\end{displaymath} Equivalently, for each finite subset $S \subseteq G$, the set \begin{multline*}
		\Delta(S) \defeq \{ (V_{1},\ldots,V_{k}) \in \mathcal{P}(G)^{k} \mid \{ V_{1},\ldots,V_{k} \} \text{ $U$-uniform covering of } G_{r}, \\
		\forall F \subseteq S , \, F \ne \emptyset \, \exists g,h \in E \colon \, \mu (gF,hF,\{V_{1},\ldots,V_{k}\}) < \theta \vert F \vert \}
	\end{multline*} is non-empty. Of course, $\Omega (S) \defeq \{ ( V_{1} \cap S, \ldots, V_{k} \cap S ) \mid (V_{1},\ldots,V_{k}) \in \Delta (S) \}$ is finite for every finite subset $S \subseteq G$. Hence, the product space \begin{displaymath}
		X \defeq \prod_{S \in \mathcal{F}(G)} \Omega (S)
	\end{displaymath} with respect to the discrete spaces $\Omega(S)$ ($S \in \mathcal{F}(G)$) is compact. We show that the subset \begin{displaymath}
		Y \defeq \{ (V_{S,1},\ldots,V_{S,k})_{S \in \mathcal{F}(G)} \in X \mid \forall T \subseteq S \in \mathcal{F}(G) \, \forall i \in \{ 1,\ldots, k \} \colon \, V_{T,i} = V_{S,i} \cap T \}
	\end{displaymath} is not empty. Towards this aim, consider the closed subsets \begin{displaymath}
		Y_{S} \defeq \{ (V_{S,1},\ldots,V_{S,k})_{S \in \mathcal{F}(G)} \in X \mid \forall T \subseteq S \, \forall i \in \{ 1,\ldots, k \} \colon \, V_{T,i} = V_{S,i} \cap T \} \quad (S \in \mathcal{F}(G)) .
	\end{displaymath} We claim that $\mathcal{Y} \defeq \{ Y_{S} \mid S \in \mathcal{F}(G) \}$ has the finite intersection property. Note that for any finite sequence of finite subsets $S_{1},\ldots,S_{n} \subseteq G$, it is true that \begin{displaymath}
		Y_{\bigcup_{i = 1}^{n} S_{i}} \subseteq \bigcap_{i =1}^{n} Y_{S_{i}} .
	\end{displaymath} Hence, it suffices to show that $Y_{S} \ne \emptyset$ for every finite subset $S \subseteq G$. So, consider a finite subset $S \subseteq G$ and let $(V_{1},\ldots,V_{k}) \in \Delta (S)$. Then $(V_{1},\ldots,V_{k}) \in \Delta (T)$ for every subset $T \subseteq S$. Let us choose any element $(V_{T,1},\ldots,V_{T,k})_{T \in \mathcal{F}(G)\setminus \mathcal{P}(S)}$ of $\prod_{S \in \mathcal{F}(G)\setminus \mathcal{P}(S)} \Omega (S)$. We obtain an element $(V_{T,1},\ldots,V_{T,1})_{T \in \mathcal{F}(G)}$ of $Y_{S}$ by setting \begin{displaymath}
		(V_{T,1},\ldots, V_{T,k}) \defeq \begin{cases}
		(V_{1} \cap T,\ldots, V_{k} \cap T) & \text{if } T \subseteq S, \\
		(V_{T,1},\ldots,V_{T,k}) & \text{otherwise}
		\end{cases} \qquad (T \in \mathcal{F}(G)) .
	\end{displaymath} Thus, $Y_{S} \ne \emptyset$. By compactness of $X$, it follows that $Y = \bigcap \mathcal{Y}$ is non-empty. Consider any $(V_{S,1},\ldots,V_{S,k})_{S \in \mathcal{F}(G)} \in Y$. Let $V_{i} \defeq \bigcup \{ V_{S,i} \mid S \in \mathcal{F}(G) \}$ for each $i \in \{ 1,\ldots, k \}$. We claim that $\mathcal{U} \defeq \{ V_{1},\ldots,V_{k} \}$ is a $U$-uniform covering of $G$. To prove this, let $x \in G$. Suppose that $Ux \nsubseteq V_{i}$ for all $i \in \{ 1,\ldots, k\}$. For each $i \in \{ 1,\ldots,k \}$, choose an element $y_{i} \in Ux\setminus V_{i}$. Put $S \defeq \{ x,y_{1},\ldots,y_{k} \}$. Then there exists $(W_{1},\ldots,W_{k}) \in \mathcal{P}(G)^{k}$ such that $\{ W_{1},\ldots,W_{k} \}$ is a $U$-uniform covering of $G$ and $V_{S,i} = W_{i} \cap S$ for each $i \in \{ 1,\ldots, k\}$. Accordingly, there exists $i \in \{ 1,\ldots, k\}$ such that $Ux \subseteq W_{i}$ and thus $\{ x,y_{1},\ldots,y_{k} \} = S \subseteq V_{S,i} \subseteq V_{i}$. However, this clearly constitutes a contradiction. Consequently, $Ux \subseteq V_{i}$ for some $i \in \{ 1,\ldots, k\}$. This means that $\mathcal{U}$ is a $U$-uniform covering of $G$. We argue that \begin{displaymath}
		\forall F \in \mathcal{F}_{+}(G) \, \exists g,h \in E \colon \, \mu (gF,hF,\mathcal{U}) < \theta \vert F \vert ,
	\end{displaymath} which would clearly contradict (2) and hence prove that (2) implies (3). For this purpose, consider a finite, non-empty subset $F \subseteq G$. Let $S \defeq F \cup EF$. By construction, there is $(W_{1},\ldots,W_{k}) \in \Delta(S)$ such that $V_{S,i} = W_{i} \cap S$ for every $i \in \{ 1,\ldots, k\}$. That is, $V_{i} \cap S = W_{i} \cap S$ for all $i \in \{ 1,\ldots, k \}$. Hence, \begin{displaymath}
		\mu (gF,hF,\mathcal{U}) = \mu (gF,hF,\{ W_{1},\ldots,W_{k} \})
	\end{displaymath} for all $g,h \in E$. Since $F \subseteq S$ and $(W_{1},\ldots,W_{k}) \in \Delta(S)$, there exist $g,h \in E$ such that \begin{displaymath}
		\mu (gF,hF,\{ W_{1},\ldots,W_{k} \}) < \theta \vert F \vert .
	\end{displaymath} Accordingly, $\mu (gF,hF,\mathcal{U}) < \theta \vert F \vert$. This contradicts (2) and hence finishes the argument.
		
	(3)$\Longrightarrow$(4). We show the desired statement (4) for $\epsilon \defeq 1-\frac{\theta}{3}$. To this end, let $E \subseteq G$ be finite and let $U$ be an identity neighborhood in $G$. Suppose $S$ to be as in (3). We prove (4) for $S' \defeq S \cup ES$. Let $f \in {\rm UC}_{b}(G_{r})$ be $(U,\tfrac{1}{8})$-uniformly continuous with $f(S') \subseteq [0,1]$. Consider the two-element $U$-uniform covering $\mathcal{U} \defeq \{ V,W \}$ where $V \defeq f^{-1}((-\infty,\tfrac{2}{3}))$ and $W \defeq f^{-1}((\tfrac{1}{3},\infty ))$. According to (3), there exists a non-empty subset $F \subseteq S$ such that $\mu (gF,hF,\mathcal{U}) \geq \theta \vert F \vert$ for all $g,h \in E$. Let $g,h \in E$. Then there exists an injective map $\phi \colon D \to hF$ such that $D \subseteq gF$ and $|D| = \mu (gF,hF,\mathcal{U})$, as well as \begin{displaymath}
		\forall x \in D \colon \, \{ x,\phi(x)\} \subseteq V \vee \{ x,\phi(x)\} \subseteq W .
	\end{displaymath} Consider any bijection $\bar{\phi} \colon gF \to hF$ with $\bar{\phi}|_{D} = \phi|_{D}$. We conclude that \begin{align*}
		|\nu_{gF}(f) - \nu_{hF}(f)| &= \frac{1}{|F|}\left|\sum_{x \in gF} f(x) - \sum_{x \in hF} f(x)\right| \\
		&= \frac{1}{|F|}\left|\sum_{x \in D} (f(x)-f(\bar{\phi} (x))) + \sum_{x \in (gF)\setminus D} (f(x)-f(\bar{\phi} (x))) \right| \\
		&\leq \frac{1}{|F|}\left(\sum_{x \in D} |f(x)-f(\phi (x))| + \sum_{x \in (gF)\setminus D} |f(x) - f(\bar{\phi}(x))|\right) \\
		&\leq \frac{1}{|F|}\left(\frac{2}{3}\vert D \vert + \vert F \vert - \vert D \vert \right) = \frac{1}{|F|}\left( \vert F \vert - \frac{1}{3}\vert D \vert \right) \\
		&\leq 1-\frac{\theta}{3} = \epsilon .
	\end{align*}
		
	(4)$\Longrightarrow$(5). Suppose $\epsilon \in (0,1)$ to be as in (4). Let $\bar{\epsilon} > 0$. Consider a finite subset $E \subseteq G$ and an identity neighborhood $U$ in $G$. Let $n \in \mathbb{N}$ such that $\epsilon^{n} < \bar{\epsilon}$. First, for each $i \in \{ 1,\ldots ,n \}$, let $U_{i}$ be an identity neighborhood in $G$ such that $f$ is $(U_{i},\tfrac{1}{8}\epsilon^{n-i-1})$-uniformly continuous. Second, we recursively choose a sequence of finite subsets $E_{0},\ldots,E_{n} \subseteq G$. Let $E_{0} \defeq E$. For $i \in \{ 1,\ldots ,n \}$, choose a finite subset $E_{i} \subseteq G$ satisfying (4) with respect to $E_{i-1}$ and $U_{i-1}$. Put $S \defeq E_{n}$. By downward recursion, we construct non-empty subsets $F_{i} \subseteq E_{i+1}$ ($i \in \{ 0,\ldots, n-1 \}$) such that $E_{i}F_{i} \subseteq E_{i+1}$ and \begin{displaymath}
		\forall g,h \in E_{i} \colon \, \vert (\nu_{g}\nu_{F_{i}}\cdots \nu_{F_{n-1}})(f) - (\nu_{h}\nu_{F_{i}}\cdots \nu_{F_{n-1}})(f) \vert \leq \epsilon^{n-i} .
	\end{displaymath} For a start, let $f_{n-1} \defeq f$ and choose a non-empty subset $F_{n -1} \subseteq E_{n}$ with $E_{n-1}F_{n-1} \subseteq E_{n}$ and \begin{displaymath}
		\forall g,h \in E_{n-1} \colon \, \vert \nu_{gF_{n-1}}(f_{n-1}) - \nu_{hF_{n-1}}(f_{n-1}) \vert \leq \epsilon .
	\end{displaymath} For the recursion, suppose that non-empty subsets $F_{n-1} \subseteq E_{n}, \ldots ,F_{i+1} \subseteq E_{i+2}$ and functions $f_{n-1},\ldots,f_{i+1} \in {\rm UC}_{b}(G_{r})$ have been chosen such that $E_{j}F_{j} \subseteq E_{j+1}$, $f_{j}$ is $(U_{j},\delta_{j})$-uniformly continuous, and \begin{displaymath}
		\forall j \in \{ i+1,\ldots,n-1 \} \, \forall g,h \in E_{j} \colon \, \vert \nu_{gF_{j}}(f_{j}) - \nu_{hF_{j}}(f_{j}) \vert \leq \epsilon .
	\end{displaymath} Define a bounded uniformly continuous function $f_{i} \colon G_{r} \to \mathbb{R}$ by \begin{align*}
		f_{i}(g) &\defeq \epsilon^{i+1-n}\left( f_{\nu_{F_{i+1}}\cdots \nu_{F_{n-1}}}(g) - \min_{h \in E_{i+1}}f_{\nu_{F_{i+1}}\cdots \nu_{F_{n-1}}}(h) \right) \\
		&= \epsilon^{i+1-n}\left( (\nu_{g}\nu_{F_{i+1}}\cdots \nu_{F_{n-1}})(f) - \min_{h \in E_{i+1}} (\nu_{h}\nu_{F_{i+1}}\cdots \nu_{F_{n-1}})(f) \right) .
	\end{align*} Since $f_{\nu_{F_{i+1}}\cdots \nu_{F_{n-1}}}$ is $(U_{i},\tfrac{1}{8}\epsilon^{n-i-1})$-relaxed by Lemma~\ref{lemma:relaxed.functions}, we conclude that $f$ is $(U_{i},\tfrac{1}{8})$-relaxed. By induction hypothesis, it is furthermore true that $f_{i}(E_{i+1}) \subseteq [0,1]$. Hence, there exists a non-empty subset $F_{i} \subseteq E_{i+1}$ such that $E_{i}F_{i} \subseteq E_{i+1}$ and $\vert \nu_{gF_{i}}(f_{i}) - \nu_{hF_{i}}(f_{i}) \vert \leq \epsilon$ for all $g,h \in E_{i}$. Note that \begin{displaymath}
			\nu_{gF_{i}}\left(f_{\nu_{F_{i+1}}\cdots \nu_{F_{n-1}}} \right) = (\nu_{gF_{i}}\cdots \nu_{F_{n-1}})(f) = (\nu_{g}\nu_{F_{i}}\cdots \nu_{F_{n-1}})(f)
	\end{displaymath} for every $g \in G$. We conclude that \begin{align*}
		\theta^{i+1-n} \vert (\nu_{g}\nu_{F_{i}}\cdots \nu_{n-1})(f) - (\nu_{h}\nu_{F_{i}}\cdots \nu_{n-1})(f) \vert = \vert \nu_{gF_{i}}(f_{i}) - \nu_{hF_{i}}(f_{i}) \vert \leq \epsilon
	\end{align*} and thus $\vert (\nu_{g}\nu_{F_{i}}\cdots \nu_{F_{n-1}})(f) - (\nu_{h}\nu_{F_{i}}\cdots \nu_{F_{n-1}})(f) \vert \leq \epsilon^{n-i}$ for all $g,h \in E_{i}$. This completes the recursion. Finally, let $\delta \defeq \delta_{F_{0}}\cdots \delta_{F_{n-1}} \in \Delta(G)$. Note that $\spt (\delta ) \subseteq F_{0}\cdots F_{n-1} \eqdef F$. Evidently, $F \subseteq S$ and \begin{displaymath}
			EF = E_{0}F_{0}F_{1}\cdots F_{n-1} \subseteq E_{1}F_{1}F_{2}\cdots F_{n-1} \subseteq \ldots \subseteq E_{n-1}F_{n-1} \subseteq E_{n} = S .
	\end{displaymath} Besides, $\nu_{\delta} = \nu_{F_{0}}\cdots \nu_{F_{n-1}}$ and hence $\vert (\nu_{g}\nu_{\delta}) (f) - (\nu_{h}\nu_{\delta})(f) \vert \leq \epsilon^{n} < \bar{\epsilon}$ for all $g,h \in E_{0} = E$.
		
	(5)$\Longrightarrow$(1). By a compactness argument similar to the one in the proof of Theorem~\ref{theorem:vanishing.matching.number.implies.amenability}, it suffices to show that for every $\epsilon > 0$, every finite subset $E \subseteq G$, and every finite sequence of uniformly continuous functions $f_{1},\ldots,f_{n} \colon G_{r} \to [0,1]$, there exists $\nu \in M(G_{r})$ such that \begin{displaymath}
		\forall i \in \{ 1,\ldots, n \} \, \forall g \in E \colon \, \vert \nu(f_{i}) - \nu(f_{i} \circ \lambda_{G}(g)) \vert \leq \epsilon .
	\end{displaymath} Therefore, let $\epsilon > 0$ and consider a finite subset $E \subseteq G$ as well as a finite sequence of uniformly continuous functions $f_{1},\ldots,f_{n} \colon G_{r} \to [0,1]$. Suppose $U$ to be an identity neighborhood in $G$ such that $f_{i}$ is $(U,\tfrac{1}{8})$-relaxed for each $i \in \{ 1,\ldots, n \}$. We construct a sequence of finite subsets $E_{0},\ldots,E_{n} \subseteq G$ by recursion. Let $E_{0} \defeq E \cup \{ e \}$. For each $i \in \{ 1,\ldots, n\}$, choose a finite subset $E_{i} \subseteq G$ satisfying (5) with regard to $U$ and with $E_{i-1}$ in place of $E$ and $\tfrac{\epsilon}{2}$ in place of $\epsilon$. Next, we construct a sequence $\delta_{1},\ldots,\delta_{n} \in \Delta (G)$ by downward recursion. For a start, choose $\delta_{n} \in \Delta (G)$ such that $(\spt \delta_{n}) \cup E_{n-1}(\spt \delta_{n}) \subseteq E_{n}$ and \begin{displaymath}
		\forall g \in E_{n-1} \colon \, \vert \nu_{\delta_{n}}(f_{n}) - (\nu_{g}\nu_{\delta_{n}})(f_{n}) \vert \leq \tfrac{\epsilon}{2} .
	\end{displaymath} For the recursive step, suppose that $\delta_{n},\ldots,\delta_{i+1} \in \Delta (G)$ have been chosen such that, for each $j \in \{ i+1,\ldots,n \}$, we have $(\spt \delta_{j}) \cup E_{j-1}(\spt \delta_{j}) \subseteq E_{j}$ and \begin{displaymath}
		\forall g \in E_{j-1} \colon \, \vert (\nu_{\delta_{j}}\cdots \nu_{\delta_{n}})(f_{j}) - (\nu_{g}\nu_{\delta_{j}}\cdots \nu_{\delta_{n}})(f_{j}) \vert \leq \tfrac{\epsilon}{2}
	\end{displaymath} Note that $(f_{i})_{\nu_{\delta_{i+1}} \cdots \nu_{\delta_{n}}} \colon G_{r} \to [0,1]$ is uniformly continuous and $(U,\tfrac{1}{8})$-relaxed by Lemma~\ref{lemma:relaxed.functions}. According to (5), there hence exists $\delta_{i} \in \Delta (G)$ such that $(\spt \delta_{i}) \cup E_{i-1}(\spt \delta_{i}) \subseteq E_{i}$ and \begin{displaymath}
		\vert (\nu_{\delta_{i}}\cdots \nu_{\delta_{n}})(f_{i}) - (\nu_{g}\nu_{\delta_{i}}\cdots \nu_{\delta_{n}})(f_{i}) \vert = \vert \nu_{\delta_{i}}((f_{i})_{\nu_{\delta_{i+1}} \cdots \nu_{\delta_{n}}}) - (\nu_{g}\nu_{\delta_{i}})((f_{i})_{\nu_{\delta_{i+1}} \cdots \nu_{\delta_{n}}}) \vert \leq \tfrac{\epsilon}{2}
	\end{displaymath} for every $g \in E_{i-1}$. This completes the recursion. Finally, consider $\delta = \delta_{1}\cdots \delta_{n} \in \Delta (G)$. Let $i \in \{ 1,\ldots, n \}$. As \begin{displaymath}
		\vert \nu_{\delta_{i}}((f_{i})_{\nu_{\delta_{i+1}}\cdots \nu_{\delta_{n}}}) - \nu_{g}((f_{i})_{\nu_{\delta_{i}}\cdots \nu_{\delta_{n}}}) \vert = \vert \nu_{\delta_{i}}((f_{i})_{\nu_{\delta_{i+1}} \cdots \nu_{\delta_{n}}}) - (\nu_{g}\nu_{\delta_{i}})((f_{i})_{\nu_{\delta_{i+1}} \cdots \nu_{\delta_{n}}}) \vert \leq \tfrac{\epsilon}{2}
	\end{displaymath} for every $g \in E_{i-1}$, we conclude that \begin{displaymath}
		\vert \nu_{\delta_{i} \cdots \delta_{n}}(f_{i}) - (\nu_{\gamma \delta_{i} \cdots \delta_{n}}(f_{i}) \vert = \vert \nu_{\delta_{i}}((f_{i})_{\nu_{\delta_{i+1}}\cdots \nu_{\delta_{n}}}) - \nu_{\gamma}((f_{i})_{\nu_{\delta_{i}}\cdots \nu_{\delta_{n}}}) \vert \leq \tfrac{\epsilon}{2}
	\end{displaymath} for every $\gamma \in \Delta (G)$ with $\spt \gamma \subseteq E_{i-1}$. Now, let $g \in E$. We observe that \begin{align*}
		&\spt (\delta_{1}\cdots \delta_{i-1}) = (\spt \delta_{1}) \cdots (\spt \delta_{i-1}) = E_{1}(\spt \delta_{2}) \cdots (\spt \delta_{i-1}) \subseteq \ldots \subseteq E_{i-1} , \\
		&\spt (\delta_{g}\delta_{1}\cdots \delta_{i-1}) = g(\spt \delta_{1}) \cdots (\spt \delta_{i-1}) = E_{0}(\spt \delta_{1}) \cdots (\spt \delta_{i-1}) \subseteq \ldots \subseteq E_{i-1} .
	\end{align*} Therefore, it follows that $\vert \nu_{\delta}(f_{i}) - (\nu_{\delta_{i}\cdots \delta_{n}})(f_{i}) \vert = \vert (\nu_{\delta_{1}\cdots \delta_{i-1}}\nu_{\delta_{i}\cdots \delta_{n}})(f_{i}) - (\nu_{\delta_{i}\cdots \delta_{n}})(f_{i}) \vert \leq \tfrac{\epsilon}{2}$ and $\vert (\nu_{g}\nu_{\delta})(f_{i}) - (\nu_{\delta_{i}\cdots \delta_{n}})(f_{i}) \vert = \vert (\nu_{\delta_{g}\delta_{1}\cdots \delta_{i-1}}\nu_{\delta_{i}\cdots \delta_{n}})(f_{i}) - (\nu_{\delta_{i}\cdots \delta_{n}})(f_{i}) \vert \leq \tfrac{\epsilon}{2}$. Consequently, $\vert \nu_{\delta}(f_{i}) - (\nu_{g}\nu_{\delta})(f_{i}) \vert \leq \epsilon$. This finishes the proof. \end{proof}

We will now discuss some aspects and some more or less direct consequences of the previous theorem.
First of all, the previous theorem does not cover infinite uniform coverings, even though this might seem to be just as natural. The corresponding results are nevertheless true, but require a different setup. We will address this issue in a forthcoming paper.

A slightly cumbersome fact is still that in condition (2) in the previous theorem, we had to use a symmetric form of the matching condition (which might be a priori stronger than its asymmetric counterpart). Indeed, we are currently not able to remove this stronger requirement without going to covers of at least three elements. This is formulated in the next corollary.
	
\begin{cor}\label{corollary:two.element.coverings} Let $G$ be a Hausdorff topological group. The following are equivalent: \begin{enumerate}
	\item[\emph{(1)}] $G$ is amenable.
	\item[\emph{(2)}] There exists $\theta \in (\tfrac{1}{2},1]$ such that for every finite subset $E \subseteq G$ and every uniform covering $\mathcal{U}$ of $G_{r}$ with $\vert \mathcal{U} \vert \leq 3$, there is a finite non-empty subset $F \subseteq G$ such that \begin{displaymath}
		\forall g \in E \colon \, \mu (F,gF,\mathcal{U}) \geq \theta \vert F \vert .
	\end{displaymath}
	\item[\emph{(3)}] There exists $\theta \in (0,1]$ such that every finite subset $E \subseteq G$, and every two-element uniform covering $\mathcal{U}$ of $G_{r}$, there exists a finite non-empty subset $F \subseteq G$ such that \begin{displaymath}
		\forall g,h \in E \colon \, \mu (gF,hF,\mathcal{U}) \geq \theta \vert F \vert .
	\end{displaymath}
\end{enumerate} \end{cor}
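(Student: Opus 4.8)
The plan is to prove Corollary~\ref{corollary:two.element.coverings} by leveraging the equivalences already established in Theorem~\ref{theorem:two.element.coverings}, using the freedom that those equivalences hold for \emph{any single} choice of $k \geq 2$ and $\theta \in (0,1)$. The implications $(1)\Rightarrow(2)$ and $(1)\Rightarrow(3)$ should be the easy directions. For $(1)\Rightarrow(3)$, amenability gives, via Theorem~\ref{theorem:two.element.coverings} applied with $k=2$ and any fixed $\theta \in (0,1)$, exactly the symmetric matching condition $\mu(gF,hF,\mathcal{U}) \geq \theta|F|$ for two-element coverings; this is literally condition (3) of that theorem, so (3) here follows immediately. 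For $(1)\Rightarrow(2)$, I would similarly invoke Theorem~\ref{theorem:two.element.coverings} with $k=3$ and some $\theta \in (\tfrac{1}{2},1)$, noting that condition (3) there is stated for coverings with $\vert\mathcal{U}\vert \leq k = 3$ and is symmetric in $g,h$; specialising to $g = e$ (the identity) in the pair $(g,h)$ recovers the asymmetric-looking form $\mu(F,hF,\mathcal{U}) \geq \theta|F|$ required in (2), since $\mu(eF,hF,\mathcal{U}) = \mu(F,hF,\mathcal{U})$ and $e$ may be added to $E$ without loss.

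The substantive direction is the collection of converse implications $(2)\Rightarrow(1)$ and $(3)\Rightarrow(1)$, and here I would aim to route each back into condition (2) of Theorem~\ref{theorem:two.element.coverings} (with $k=2$, which then yields amenability). First I would handle $(3)\Rightarrow(1)$. The difficulty is that (3) provides a \emph{symmetric} two-cover matching bound $\mu(gF,hF,\mathcal{U}) \geq \theta|F|$ for all $g,h \in E$, with some \emph{fixed} $\theta > 0$ that need not be close to $1$; whereas condition (2) of Theorem~\ref{theorem:two.element.coverings} requires the bound for \emph{every} prescribed $\theta \in (0,1)$. The standard device to bridge this gap is amplification: one boosts a fixed matching ratio towards $1$ by composing matchings over star-refinements, using Corollary~\ref{corollary:composition.of.matchings}, much as in the proof of Proposition~\ref{proposition:generating.subsets}. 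Concretely, given a target $\theta_{0}$ close to $1$ and a cover $\mathcal{U}$, one chooses a star-refinement chain $\mathcal{U} \preceq \mathcal{V}^{\ast,n-1}$, applies (3) to obtain sets $F$ with the fixed ratio $\theta$ with respect to $\mathcal{V}$, and chains $n$ such matchings together so that the accumulated ratio, roughly $n\theta - (n-1)$, exceeds $\theta_{0}$ provided $\theta > 1 - \tfrac{1-\theta_{0}}{n}$. The key constraint is that $\theta$ must be bounded below by a quantity approaching $1$ as $n$ grows, so a \emph{single} fixed $\theta$ suffices only if we can iterate within one group element; the symmetric form of (3) (matching $gF$ against $hF$ for pairs drawn from a common $E$) is precisely what allows the telescoping $\mu(s_{i+1}(F),s_{i}(F),\cdot)$ argument to go through without ever needing $\theta$ near $1$ at the input stage.

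For $(2)\Rightarrow(1)$, where now $\vert\mathcal{U}\vert \leq 3$ and $\theta \in (\tfrac{1}{2},1]$, the same amplification strategy applies, but the role of the threshold $\tfrac{1}{2}$ becomes essential: with $\theta > \tfrac{1}{2}$ the composition inequality of Lemma~\ref{lemma:composition.of.matchings}, $\mu(F_{0},F_{2},\mathcal{U}^{\ast}) \geq \mu(F_{0},F_{1},\mathcal{U}) + \mu(F_{1},F_{2},\mathcal{U}) - |F_{1}|$, yields a strictly positive and indeed improving ratio $2\theta - 1 > 0$ upon each doubling, so iterating drives the matching number up to any prescribed fraction of $|F|$. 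The reason three-element covers appear here rather than two is that one must reduce an \emph{asymmetric} bound (only $\mu(F,gF,\cdot)$, not the full symmetric version) up to amenability, and the extra element of the cover provides the slack needed to symmetrise: given a two-element target cover, one enlarges it by a third "buffer" set so that an asymmetric matching with respect to the three-element cover encodes a symmetric matching with respect to the original two, allowing the input of Theorem~\ref{theorem:two.element.coverings}(2) to be recovered.

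I expect the main obstacle to be the bookkeeping in the amplification step: one must verify that finite subsets $E$ and coverings can be arranged so that the chained matchings all live over a common star-refinement chain, that the element $e$ is available in each index set, and that the fixed input ratio together with the number of composition steps really clears the arbitrary target $\theta_{0}$. A secondary subtlety is confirming that passing from the symmetric condition (matching $gF$ with $hF$) to the asymmetric form needed by Theorem~\ref{theorem:two.element.coverings}(2) does not lose the uniformity of the covering or its cardinality bound; this is where the margin between $\vert\mathcal{U}\vert \leq 2$ and $\vert\mathcal{U}\vert \leq 3$ must be spent carefully, and where I anticipate the proof's delicacy to concentrate.
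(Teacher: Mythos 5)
Your reduction of $(1)\Rightarrow(2)$ and $(1)\Rightarrow(3)$ to Theorem~\ref{theorem:two.element.coverings} is fine, but the converse directions as you describe them do not work: the central device you propose --- amplifying a fixed matching ratio $\theta$ towards $1$ by chaining matchings over star-refinements via Corollary~\ref{corollary:composition.of.matchings} --- is not available. Composition of matchings \emph{degrades} the ratio: the bound $n\theta-(n-1)=1-n(1-\theta)$ decreases in $n$ for $\theta<1$ (and $2\theta-1<\theta$, so the ratio is not ``improving upon each doubling'' as you claim), so no amount of iteration can turn a fixed $\theta\in(0,1)$ into an arbitrary $\theta_{0}$ close to $1$. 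In Proposition~\ref{proposition:generating.subsets} the chaining works only because the \emph{input} ratio may be taken as close to $1$ as needed; here it is fixed, which is exactly the situation chaining cannot handle. You half-notice this (``$\theta$ must be bounded below by a quantity approaching $1$ as $n$ grows'') but then assert without justification that the symmetric form of the hypothesis rescues the telescoping; it does not. The passage from a single fixed $\theta$ to amenability is the genuinely hard content of Theorem~\ref{theorem:two.element.coverings} itself (the chain (3)$\Rightarrow$(4)$\Rightarrow$(5)$\Rightarrow$(1), which goes through means, not through composed matchings), and the corollary is meant to be harvested from that theorem, not re-proved.

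Concretely: $(3)\Leftrightarrow(1)$ requires no work at all, since condition (3) of the corollary \emph{is} condition (3) of Theorem~\ref{theorem:two.element.coverings} with $k=2$ and the given $\theta$, and the theorem's equivalences hold for each single such pair $(k,\theta)$; you observe this for the forward direction but then treat the converse as substantive. The only new content is $(2)\Rightarrow(3)$, and there a \emph{single} application of Lemma~\ref{lemma:composition.of.matchings} suffices, with no iteration: given a two-element uniform covering $\mathcal{U}=\{U_{0},U_{1}\}$, use Lemma~\ref{lemma:uniform.covers} to produce a three-element uniform covering $\mathcal{V}$ (namely $f^{-1}([0,\tfrac{1}{2}))$, $f^{-1}((\tfrac{1}{4},\tfrac{3}{4}))$, $f^{-1}((\tfrac{1}{2},1])$ for a partition of unity $f,1-f$ subordinate to $\mathcal{U}$) with the property that any pair of points lying in $\mathcal{V}$-star-related members of $\mathcal{V}$ lies in a common element of $\mathcal{U}$; apply (2) to $\mathcal{V}$ to get $F$ with $\mu(F,gF,\mathcal{V})\geq\theta_{0}|F|$ for all $g\in E$, and compose the two matchings $gF\to F\to hF$ \emph{once} to obtain $\mu(gF,hF,\mathcal{U})\geq(2\theta_{0}-1)|F|$. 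This is where the hypothesis $\theta_{0}>\tfrac{1}{2}$ is spent --- to make $\theta_{1}\defeq 2\theta_{0}-1$ positive, not to make an iteration converge --- and Theorem~\ref{theorem:two.element.coverings} with $\theta=\theta_{1}$ and $k=2$ then finishes. Your ``buffer set'' intuition for why three cover elements appear is essentially right, but as assembled your argument has no valid mechanism for reaching amenability.
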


\begin{proof} Note that Theorem~\ref{theorem:two.element.coverings} asserts that (3)$\Longleftrightarrow$(1)$\Longrightarrow$(2). Hence, we are left to prove that (2)$\Longrightarrow$(3). For this purpose, suppose that (2) holds for some $\theta_{0} \in (\frac{1}{2},1]$. We show (3) for $\theta_{1} \defeq 2\theta_{0} - 1$. To this end, let $E \subseteq G$ be finite and let $\mathcal{U} = \{ U_{0},U_{1} \}$ be a two-element uniform covering of $G$. By Lemma~\ref{lemma:uniform.covers}, there exists a uniformly continuous function $f \colon G_{r} \to [0,1]$ such that $\spt (f) \subseteq U_{0}$ and $\spt (1-f) \subseteq U_{1}$. Since $f$ is uniformly continuous, it follows that $\mathcal{V} \defeq \{ f^{-1}([0,\tfrac{1}{2})), \, f^{-1}((\tfrac{1}{4},\tfrac{3}{4})), \, f^{-1}((\tfrac{1}{2},1]) \}$ is a uniform covering of $G$. Of course, $\vert \mathcal{V} \vert \leq 3$. According to (2), there exists a finite non-empty subset $F \subseteq G$ such that $\mu (F,gF,\mathcal{V}) \geq \theta_{0} \vert F \vert$ for every $g \in E$. Using Lemma~\ref{lemma:composition.of.matchings}, we conclude that \begin{displaymath}
	\mu (gF,hF,\mathcal{V}^{\ast}) \geq \mu (gF,F,\mathcal{V}) + \mu (F,hF,\mathcal{V}) - \vert F \vert \geq (2\theta_{0} - 1) \vert F \vert = \theta_{1}\vert F \vert
\end{displaymath} for any two $g,h \in F$. This completes the proof. \end{proof}

As mentioned before, we obtain proofs of results from  \cite{kaichouh}. The following result was proven for Polish groups in \cite{kaichouh}.

\begin{cor}\label{corollary:measuring.single.functions} Let $G$ be a Hausdorff topological group. Then $G$ is amenable if and only if the following holds: for every finite subset $E \subseteq G$ and every uniformly continuous map $f \colon G_{r} \to [0,1]$, there exists a mean $\nu \in M(G_{r})$ such that $\nu(f \circ \lambda_{G}(g)) = \nu (f)$ for all $g \in E$. \end{cor}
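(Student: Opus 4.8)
The plan is to derive Corollary~\ref{corollary:measuring.single.functions} directly from Theorem~\ref{theorem:two.element.coverings}, exploiting the fact that condition (2) of that theorem may be checked using only two-element uniform coverings (take $k=2$). The forward direction is immediate: if $G$ is amenable, then it possesses an invariant mean $\mu \in M(G_r)$, and this single $\mu$ simultaneously satisfies $\mu(f \circ \lambda_G(g)) = \mu(f)$ for \emph{every} uniformly continuous $f$ and every $g \in G$, so in particular the stated condition holds for each individual $f$ and each finite $E$. The substance of the corollary is the converse, where we are handed, for each single function $f$, a mean that averages $f$ invariantly over $E$, and we must upgrade this to genuine amenability.

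For the converse I would fix an arbitrary finite subset $E \subseteq G$ together with a two-element uniform covering $\mathcal{U} = \{U_0, U_1\}$ of $G_r$, and aim to verify condition (3) (with $k=2$) of Theorem~\ref{theorem:two.element.coverings}, or equivalently condition (2) via the two-element reformulation, for some fixed $\theta \in (0,1)$. The first step is to pass from the covering to a function: by Lemma~\ref{lemma:uniform.covers} there is a uniformly continuous $f \colon G_r \to [0,1]$ with $\spt(f) \subseteq U_0$ and $\spt(1-f) \subseteq U_1$, so that $f$ encodes the covering. By hypothesis there is a mean $\nu \in M(G_r)$ with $\nu(f \circ \lambda_G(g)) = \nu(f)$ for all $g \in E$. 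The second step is to approximate this abstract mean by a concrete finitely-supported convex combination: by Theorem~\ref{theorem:general.means} the set $\{\nu_\delta \mid \delta \in \Delta(G)\}$ is weak-$*$ dense in $M(G_r)$, so we may replace $\nu$ by some $\nu_\delta$ that approximately $E$-averages $f$ to within a prescribed tolerance, and then $\nu_\delta$ is itself an average over a finite nonempty set $F \defeq \spt(\delta)$ (after clearing denominators, an honest set average $\nu_F$).

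The third and crucial step is the translation between the analytic averaging statement $|\nu_F(f) - \nu_{gF}(f)|$ being small and the combinatorial matching statement $\mu(F, gF, \mathcal{U}) \geq \theta|F|$. This is precisely the bridge used inside the proof of Theorem~\ref{theorem:two.element.coverings}: in the implication (4)$\Longrightarrow$(5) one converts a good matching into a small difference of means via the two-element covering $\{f^{-1}((-\infty,\tfrac23)), f^{-1}((\tfrac13,\infty))\}$, and conversely a small difference of means forces, through Hall's theorem (Theorem~\ref{theorem:hall}) applied as in the proof of Theorem~\ref{theorem:amenability.implies.vanishing.matching.number}, a large matching number. I expect the main obstacle to lie exactly here, in running this correspondence in the direction that produces a matching from the hypothesis about a single function: one must manufacture, from the single averaging datum, the Hall-type neighborhood inequality $|S| - |N_{\mathcal{B}}(S)| \leq (1-\theta)|F|$ for all $S \subseteq F$ relative to the bipartite graph $\mathcal{B}(F, gF, \mathcal{U})$.

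In practice the cleanest route is to recognize that the hypothesis of the corollary is a verbatim instance of condition (6) of Theorem~\ref{theorem:two.element.coverings} specialized to a single function, or can be fed into the compactness argument of the implication (5)$\Longrightarrow$(1): that argument already shows how to assemble an invariant mean from the ability to invariantly average finitely many functions one at a time, using the semigroup structure on $\Delta(G)$ and Lemma~\ref{lemma:relaxed.functions} to iterate. Thus, rather than re-proving the matching translation, I would phrase the converse as a direct application of the equivalence (1)$\Longleftrightarrow$(6), observing that the ability to average each single uniformly continuous $f$ over $E$ yields, after the same recursive construction of $\delta = \delta_1 \cdots \delta_n$ used there, a single $\delta \in \Delta(G)$ that nearly averages a prescribed finite family $f_1, \ldots, f_n$; passing to a weak-$*$ limit then produces an invariant mean, establishing amenability.
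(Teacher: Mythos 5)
Your second and third paragraphs reproduce, essentially verbatim, the paper's actual proof: encode the two-element covering $\{U_0,U_1\}$ by a function $f$ via Lemma~\ref{lemma:uniform.covers}, use the hypothesis together with Lemma~\ref{lemma:dense.set.of.means} to find a finite set $F$ with $\left\vert \sum_{x\in F} f(gx)-\sum_{x\in F} f(hx)\right\vert \leq \tfrac{1}{4}\vert F\vert$ for $g,h\in E$, and then run the partition-of-unity/Hall computation from the proof of Theorem~\ref{theorem:amenability.implies.vanishing.matching.number} to get $\mu(gF,hF,\mathcal{U})\geq \tfrac12\vert F\vert$, which is condition (3) of Theorem~\ref{theorem:two.element.coverings} with $k=2$, $\theta=\tfrac12$. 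Had you stopped there, the proposal would be correct and identical in strategy to the paper. The step you flag as "the main obstacle" is not an obstacle at all: it is already done in Theorem~\ref{theorem:amenability.implies.vanishing.matching.number}, whose Hall-type estimate $|S|-|N_{\mathcal{B}}(S)|\leq(1-\theta)|F|$ only uses the near-invariance of the sums $\sum_{x\in F}f_U(x)$, exactly what the single-function hypothesis supplies for a two-element covering.

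The route you actually commit to in the final paragraph, however, has a genuine gap. The hypothesis of the corollary is \emph{not} a verbatim instance of condition (6) of Theorem~\ref{theorem:two.element.coverings}, and cannot be fed directly into the implication (5)$\Longrightarrow$(1). In conditions (4)--(6) the finite set $S$ is chosen \emph{before} the function, uniformly over all $(U,\tfrac18)$-uniformly continuous $f$ with $f(S)\subseteq[0,1]$ -- a point the paper explicitly stresses as the nontrivial content of those conditions. The corollary's hypothesis hands you, for each individual $f$, a mean whose approximating $\delta\in\Delta(G)$ has completely uncontrolled support. This uniformity is not cosmetic: in the downward recursion of (5)$\Longrightarrow$(1), the element $\delta_i$ must approximately invariantly average the function $(f_i)_{\nu_{\delta_{i+1}}\cdots\nu_{\delta_n}}$ (which depends on the already-chosen $\delta_{i+1},\ldots,\delta_n$) over the set $E_{i-1}$, while $E_{i-1}$ must in turn contain $\spt(\delta_g\delta_1\cdots\delta_{i-1})$ for the yet-to-be-chosen $\delta_1,\ldots,\delta_{i-1}$. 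The only way to break this circularity is to fix the sets $E_0\subseteq E_1\subseteq\cdots\subseteq E_n$ in advance, which requires precisely the uniform-in-$f$ choice of $S$. Concretely, if $\nu_1$ invariantly averages $f_1$ over $E$, then $(f_1)_{\nu_1}$ is constant on $E\cup\{e\}$, but the composed mean $\nu_2\nu_1$ evaluates $(f_1)_{\nu_1}$ on $E\cdot\spt(\delta_2)$, where nothing is known; so the naive iteration of single-function means does not preserve approximate invariance of the earlier functions. You should therefore discard the final paragraph and carry out the first route to completion.
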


\begin{proof} Evidently, the forward implication is valid. In order to prove the backward implication, we shall utilize the equivalence of (1) and (3) in Theorem~\ref{theorem:two.element.coverings}. Indeed we are going to establish (3) in Theorem~\ref{theorem:two.element.coverings} for $\theta = \frac{1}{2}$ and $k = 2$. So, consider a finite subset $E \subseteq G$ and a two-element uniform covering $\mathcal{U} = \{ U_{0}, U_{1} \}$ of $G_{r}$. By Lemma~\ref{lemma:uniform.covers}, there exists a uniformly continuous function $f \colon G_{r} \to [0,1]$ such that $\spt (f) \subseteq U_{0}$ and $\spt (1-f) \subseteq U_{1}$. Due to our assumption and Lemma~\ref{lemma:dense.set.of.means}, there is a finite non-empty subset $F \subseteq G$ such that \begin{displaymath}
	\left\vert \sum_{x \in F} f(gx) - \sum_{x \in F} f(hx) \right\vert \leq \frac{\vert F \vert}{4}
\end{displaymath} for all $g,h \in E$. Analogously to the argument in the proof of Theorem~\ref{theorem:amenability.implies.vanishing.matching.number}, it follows that $\mu (gF,hF,\mathcal{U}) \geq \frac{1}{2}\vert F \vert$ for all $g,h \in E$. This substantiates that (3) in Theorem~\ref{theorem:two.element.coverings} is satisfied for $\theta = \frac{1}{2}$ and $k = 2$. Hence, $G$ is amenable by Theorem~\ref{theorem:two.element.coverings}. \end{proof}

\section{Coset colorings of non-archimedean groups}\label{section:colorings.of.non.archimedean.groups}

In this brief section we reformulate the results of the previous one for non-archimedean groups in terms of coset colorings. Recall that a topological group $G$ is \emph{non-archimedean} if every neighborhood of the neutral element in $G$ contains an open subgroup of $G$. Clearly, the class of non-archimedean groups encompasses all discrete groups as well as all topological subgroups of $S_{\infty}$. It is also well known that any totally disconnected, locally compact Hausdorff topological group is non-archimedean \cite{dantzig}. Now, let $G$ be an arbitrary non-archimedean topological group. Then the right uniformity of $G$ is given by \begin{displaymath}
\{ \mathcal{U} \subseteq \mathcal{P}(G) \mid \exists H \textnormal{ open subgroup of } G \colon \, \mathcal{U} \preceq H\backslash G \} ,
\end{displaymath} where $H\backslash G \defeq \{ Hx \mid x \in G\}$ for $H \leq G$. In particular, for every finite uniform covering $\mathcal{U}$ of $G_{r}$ there exists an open subgroup $H$ of $G$ as well as a map $\phi \colon H\backslash G \to \{ 0,\ldots,n \}$ with $n \geq 1$ such that $\mathcal{U}$ is refined by $\{ (\phi \circ \pi_{H})^{-1}(i) \mid i \in \{ 0,\ldots,n \} \}$ where $\pi_{H} \colon G \to H\backslash G, \, x \mapsto Hx$. Therefore, as a consequence of Theorem~\ref{theorem:amenable.groups} and Theorem~\ref{theorem:two.element.coverings}, we immediately obtain the subsequent characterization of amenability for non-archimedean Hausdorff topological groups in terms of right coset colorings.

\begin{cor}\label{coro:totally.disconnected}
	If $G$ is a non-archimedean Hausdorff topological group, then the following statements are equivalent.
	\begin{enumerate}
		\item[$(1)$]	$G$ is amenable.
		\item[$(2)$]	For every $\epsilon > 0$, every open subgroup $H \leq G$, every map $\varphi \colon H\backslash G \to \{ 0,\ldots, n \}$ with $n \geq 1$, and every finite subset $E \subseteq G$, there is a finite non-empty subset $F \subseteq G$ with \begin{equation*}
			 \qquad \qquad \forall i \in \{ 0,\ldots,n \} \, \forall g \in E \colon \, \left\lvert \lvert F \cap (\varphi \circ \pi_{H})^{-1}(i) \rvert - \lvert gF \cap (\varphi \circ \pi_{H})^{-1}(i) \rvert \right\rvert \leq \varepsilon \lvert F \rvert . \end{equation*}
		\item[$(3)$]	For every $\epsilon > 0$, every open subgroup $H \leq G$, every subset $A \subseteq G$, and every finite subset $E \subseteq G$, there is a finite non-empty subset $F \subseteq G$ such that \begin{equation*}
			\qquad \quad \forall g \in E \colon \, \left\lvert \lvert F \cap HA \rvert - \lvert gF \cap HA \rvert \right\rvert \leq \varepsilon \lvert F \rvert . \end{equation*} \end{enumerate}
\end{cor}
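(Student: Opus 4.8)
The strategy is to establish the cycle of implications $(1)\Rightarrow(2)\Rightarrow(3)\Rightarrow(1)$, reading every finite uniform covering of $G_{r}$ as a coset colouring via the description of the right uniformity recalled before the statement, and translating matching numbers into counting quantities through a combinatorial identity for partitions. The identity I would record first is the following: if $\mathcal{P}=\{U_{0},\ldots,U_{n}\}$ is a finite partition of $G$ into clopen cells (each a union of right cosets) and $F,F'\subseteq G$ are finite with $\vert F\vert=\vert F'\vert$, then a maximal matching in $\mathcal{B}(F,F',\mathcal{P})$ matches, independently within each cell, $\min(\vert F\cap U_{i}\vert,\vert F'\cap U_{i}\vert)$ points, whence
\begin{displaymath}
	\mu(F,F',\mathcal{P}) = \sum_{i=0}^{n}\min\bigl(\vert F\cap U_{i}\vert,\vert F'\cap U_{i}\vert\bigr) = \vert F\vert - \tfrac{1}{2}\sum_{i=0}^{n}\bigl\vert\, \vert F\cap U_{i}\vert - \vert F'\cap U_{i}\vert\,\bigr\vert .
\end{displaymath}
Two elementary facts should also be isolated: if $\mathcal{P}$ refines $\mathcal{U}$ then $R(F,F',\mathcal{P})\subseteq R(F,F',\mathcal{U})$ and hence $\mu(F,F',\mathcal{U})\geq\mu(F,F',\mathcal{P})$; and for a partition $\mathcal{P}$ one has $\mathcal{P}^{\ast}=\mathcal{P}$, since the cells are pairwise disjoint so $\St(U_{i},\mathcal{P})=U_{i}$.

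For $(1)\Rightarrow(2)$, given $\epsilon>0$, an open subgroup $H$, a map $\varphi\colon H\backslash G\to\{0,\ldots,n\}$ and a finite $E\subseteq G$, I would set $\mathcal{P}\defeq\{(\varphi\circ\pi_{H})^{-1}(i)\mid i\in\{0,\ldots,n\}\}$ (discarding empty cells). This is a clopen partition coarser than $H\backslash G$, hence a finite uniform covering of $G_{r}$. Applying Theorem~\ref{theorem:amenable.groups} with $\theta\defeq 1-\tfrac{\epsilon}{2}$ yields a finite non-empty $F$ with $\mu(F,gF,\mathcal{P})\geq\theta\vert F\vert$ for all $g\in E$; by the displayed identity, $\sum_{i}\bigl\vert\,\vert F\cap U_{i}\vert-\vert gF\cap U_{i}\vert\,\bigr\vert = 2(\vert F\vert-\mu(F,gF,\mathcal{P}))\leq\epsilon\vert F\vert$, and a fortiori each single summand is $\leq\epsilon\vert F\vert$, which is exactly~(2). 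The implication $(2)\Rightarrow(3)$ is the case $n=1$: for a subset $A\subseteq G$ the set $HA=\bigcup_{a\in A}Ha$ is a union of right cosets, so $\varphi(Hx)\defeq\mathbf{1}_{HA}(x)$ is a well-defined two-colouring with $(\varphi\circ\pi_{H})^{-1}(1)=HA$, and the conclusion of~(2) for the colour $1$ is precisely the estimate in~(3).

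The crux is $(3)\Rightarrow(1)$, where I would verify condition~(3) of Theorem~\ref{theorem:two.element.coverings} for $k=2$ and, say, $\theta=\tfrac{1}{2}$. Fix a finite $E\subseteq G$ and a two-element uniform covering $\mathcal{U}$ of $G_{r}$; it is $H$-uniform for some open $H$, so choosing for each coset a cell containing it produces a clopen two-cell partition $\mathcal{P}=\{HA,\,G\setminus HA\}$ refining $\mathcal{U}$, where $HA$ is a union of cosets. Applying~(3) to $H$, $A$, $E$ and a small $\epsilon\leq\tfrac{1}{4}$ gives a finite non-empty $F$ with $\bigl\vert\,\vert F\cap HA\vert-\vert gF\cap HA\vert\,\bigr\vert\leq\epsilon\vert F\vert$ for all $g\in E$; since $\vert F\vert=\vert gF\vert$ the complementary cell has the same defect, so the identity gives $\mu(F,gF,\mathcal{P})\geq(1-\epsilon)\vert F\vert$. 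The obstacle is that Theorem~\ref{theorem:two.element.coverings}(3) requires the \emph{symmetric} quantity $\mu(gF,hF,\mathcal{U})$, whereas~(3) only compares $F$ with its translates. This is resolved by Lemma~\ref{lemma:composition.of.matchings} together with $\mathcal{P}^{\ast}=\mathcal{P}$: taking $F_{0}=gF$, $F_{1}=F$, $F_{2}=hF$ and using $\mu(gF,F,\mathcal{P})=\mu(F,gF,\mathcal{P})$ by symmetry of the relation, one obtains
\begin{displaymath}
	\mu(gF,hF,\mathcal{U})\geq\mu(gF,hF,\mathcal{P})\geq\mu(F,gF,\mathcal{P})+\mu(F,hF,\mathcal{P})-\vert F\vert\geq(1-2\epsilon)\vert F\vert\geq\tfrac{1}{2}\vert F\vert
\end{displaymath}
for all $g,h\in E$. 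This verifies condition~(3) of Theorem~\ref{theorem:two.element.coverings} and hence the amenability of $G$, closing the cycle. The only points needing care beyond this are the routine verifications that $HA$ is a union of right cosets (so colourings and the indicator $\mathbf{1}_{HA}$ are well defined and the partitions are genuine uniform coverings) and that the chosen $\epsilon$ keeps $1-2\epsilon$ inside the admissible range for $\theta$.
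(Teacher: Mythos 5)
Your proposal is correct and follows exactly the route the paper intends: the paper gives no written-out proof but derives the corollary from Theorem~\ref{theorem:amenable.groups} (which handles $(1)\Leftrightarrow(2)$ via the coset-partition reading of finite uniform coverings) and Theorem~\ref{theorem:two.element.coverings} (which is precisely what is needed for $(3)\Rightarrow(1)$, since $(3)$ only controls two-cell partitions). Your partition identity $\mu(F,F',\mathcal{P})=\vert F\vert-\tfrac{1}{2}\sum_{i}\bigl\lvert\,\vert F\cap U_{i}\vert-\vert F'\cap U_{i}\vert\,\bigr\rvert$, the observation $\mathcal{P}^{\ast}=\mathcal{P}$, and the use of Lemma~\ref{lemma:composition.of.matchings} to pass from $\mu(F,gF,\mathcal{P})$ to the symmetric quantity $\mu(gF,hF,\mathcal{P})$ are all sound and constitute the correct filling-in of the details the paper omits.
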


Even in the case of discrete groups, this provides us with the following interesting characterization of amenability, which is due to Moore \cite{moore}.

\begin{cor}[\cite{moore}]\label{coro.discrete}
	Let $G$ be a discrete group. The following are equivalent.
	\begin{enumerate}
		\item[$(1)$]	$G$ is amenable.
		\item[$(2)$]	For every $\epsilon > 0$, every map $\varphi \colon G \to \{ 0,\ldots, n \}$ with $n \geq 1$, and every finite subset $E \subseteq G$, there exists a finite non-empty subset $F \subseteq G$ such that \begin{equation*}
			\qquad \qquad \forall i \in \{ 0,\ldots,n \} \, \forall g \in E \colon \, \left\lvert \lvert F \cap \varphi^{-1}(i) \rvert - \lvert gF \cap \varphi^{-1}(i) \rvert \right\rvert \leq \varepsilon \lvert F \rvert . \end{equation*}
		\item[$(3)$]	For every $\epsilon > 0$, every subset $A \subseteq G$, and every finite subset $E \subseteq G$, there exists a finite non-empty subset $F \subseteq G$ such that \begin{equation*}
			\qquad \quad \forall g \in E \colon \, \left\lvert \lvert F \cap A \rvert - \lvert gF \cap A \rvert \right\rvert \leq \varepsilon \lvert F \rvert . \end{equation*}\end{enumerate}
\end{cor}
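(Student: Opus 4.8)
The plan is to obtain this corollary as the special case of Corollary~\ref{coro:totally.disconnected} in which $G$ is equipped with the discrete topology. First I would recall, as already noted in the text, that every discrete group is non-archimedean, so that Corollary~\ref{coro:totally.disconnected} applies to $G$. The decisive observation is that in a discrete group the trivial subgroup $H = \{ e \}$ is open; for this choice one has $H\backslash G = G$ and $\pi_{H} = \id_{G}$, whence $(\varphi \circ \pi_{H})^{-1}(i) = \varphi^{-1}(i)$ and $HA = A$. Thus, upon specialising $H$ to $\{ e \}$, the three conditions of Corollary~\ref{coro:totally.disconnected} collapse verbatim to conditions (1)--(3) of the present statement.

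To make the equivalences precise, I would argue each implication separately. For the forward directions (from amenability to the colouring, respectively subset, condition) it suffices to instantiate the conclusion of Corollary~\ref{coro:totally.disconnected} at $H = \{ e \}$: any map $\varphi \colon G \to \{ 0,\ldots,n \}$ is in particular a map on $\{ e \}\backslash G = G$, and any subset $A \subseteq G$ equals $\{ e \}A$, so the required $F$ is produced at once. For the backward directions I would verify that the a priori weaker conditions (2) and (3) above, which only mention colourings, respectively subsets, of $G$ itself, already recover the full strength of Corollary~\ref{coro:totally.disconnected} over \emph{all} open subgroups. Indeed, given any open subgroup $H \leq G$ and any coset colouring $\varphi \colon H\backslash G \to \{ 0,\ldots,n \}$, the composite $\varphi \circ \pi_{H}$ is itself a map $G \to \{ 0,\ldots,n \}$, and applying condition (2) to it yields exactly the inequality demanded by Corollary~\ref{coro:totally.disconnected}(2). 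Likewise, for an open subgroup $H$ and a subset $A \subseteq G$, the set $HA$ is simply some subset of $G$, so condition (3) applied to $HA$ supplies the required $F$.

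The upshot is that there is essentially no real obstacle: the entire content resides in Corollary~\ref{coro:totally.disconnected}, and the only point needing care is the observation that, in the discrete case, an arbitrary coset colouring (respectively union of $H$-cosets) is nothing but an arbitrary colouring (respectively subset) of $G$, so that quantifying over the single open subgroup $\{ e \}$ already exhausts the quantification over all open subgroups. Consequently the equivalences (1)$\Leftrightarrow$(2)$\Leftrightarrow$(3) follow immediately from Corollary~\ref{coro:totally.disconnected}.
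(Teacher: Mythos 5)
Your proposal is correct and is exactly the derivation the paper intends: the corollary is stated as an immediate specialisation of Corollary~\ref{coro:totally.disconnected} to the discrete case, and your instantiation at the open subgroup $H=\{e\}$, together with the observation that conditions for arbitrary open $H$ follow from the $H=\{e\}$ case by composing with $\pi_H$ (respectively replacing $A$ by $HA$), is precisely the routine verification the paper leaves implicit.
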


We conclude this section with an application. It was shown by Giordano and de la Harpe \cite{GiordanoDeLaHarpe} that a countable discrete group is amenable if and only if every of its continuous actions on a Cantor space is amenable. Utilizing Theorem~\ref{theorem:two.element.coverings}, we shall establish an amenability criterion for general non-archimedean groups in terms of minimal subflows of certain canonical shift operations, which in particular provides a generalization of the mentioned result by Giordano and de la Harpe to second-countable non-archimedean Hausdorff topological groups. For a non-archimedean Hausdorff topological group $G$ and any open subgroup $H \leq G$, let us consider the $G$-flow $\alpha_{H} \colon G \to \Aut (2^{H\backslash G})$ given by \begin{displaymath}
	\alpha_{H}(g)(\phi)(Hx) \defeq \phi (Hxg) \qquad (g \in G, \, \phi \in 2^{H\backslash G}) .
\end{displaymath}

\begin{cor}\label{corollary:non.archimedean} A non-archimedean Hausdorff topological group $G$ is amenable if and only if, for every open subgroup $H \leq G$, every minimal subflow of $\alpha_{H} \colon G \to \Aut (2^{H\backslash G})$ is amenable. \end{cor}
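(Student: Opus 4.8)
The plan is to exploit the characterization of amenability for non-archimedean groups provided by Corollary~\ref{coro:totally.disconnected}, condition (3), which phrases amenability in terms of right-coset colorings: $G$ is amenable if and only if for every $\epsilon > 0$, every open subgroup $H \leq G$, every $A \subseteq G$, and every finite $E \subseteq G$, there is a finite non-empty $F \subseteq G$ with $\bigl\lvert \lvert F \cap HA\rvert - \lvert gF \cap HA\rvert\bigr\rvert \leq \epsilon \lvert F\rvert$ for all $g \in E$. The key is to recognize that each such coloring of $H\backslash G$ is exactly a point $\phi \in 2^{H\backslash G}$, and that the $G$-flow $\alpha_{H}$ is precisely the natural shift on these colorings. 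Under this dictionary, the counting condition in (3) says that $F$ is an approximate F\o lner set for the action of $E$ on the orbit of $\phi$ inside $2^{H\backslash G}$.

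For the forward direction, I would note that amenability of $G$ is equivalent to amenability of every $G$-flow (as recalled in Section~\ref{section:amenability}), so in particular every minimal subflow of each $\alpha_H$ is amenable; this direction is essentially immediate. The substantial direction is the converse. Here I would argue contrapositively: if $G$ is \emph{not} amenable, then by Corollary~\ref{coro:totally.disconnected} there exist $\epsilon > 0$, an open subgroup $H$, a set $A \subseteq G$, and a finite $E \subseteq G$ such that \emph{every} finite non-empty $F$ violates the F\o lner-type estimate for some $g \in E$. Setting $\phi \defeq \chi_{HA} \in 2^{H\backslash G}$ (the indicator of the $H$-invariant set $HA$, viewed as a coloring of $H\backslash G$), the failure of the estimate is a statement purely about the $G$-orbit of $\phi$ under $\alpha_H$. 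The goal is then to produce a \emph{minimal} subflow of $\alpha_H$ carrying no invariant mean.

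The main obstacle is passing from the non-F\o lner behaviour at the single point $\phi$ to the absence of an invariant mean on a \emph{minimal} subflow. I would proceed as follows. Let $Y \defeq \overline{\alpha_H(G)\phi}$ be the orbit closure, a closed $\alpha_H$-invariant (hence compact) subset of $2^{H\backslash G}$. By Zorn's lemma $Y$ contains a minimal closed invariant subset $Z$, giving a minimal subflow $\beta \colon G \to \Aut(Z)$. By hypothesis $\beta$ is amenable, so there is a $G$-invariant mean $m$ on $C(Z)$, equivalently a $G$-invariant regular Borel probability measure $\mu$ on $Z$. The delicate point is that the F\o lner obstruction lives at $\phi$, which need not lie in $Z$; so I must transport the invariant measure on $Z$ back to an invariant measure relevant to $\phi$. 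The clean way is instead to observe that an invariant mean on a minimal subflow, together with minimality, yields an invariant mean on all of $Y$ (since every point of $Y$ has $Z$ in its orbit closure, one pushes the measure along the flow), and an invariant measure on $Y$ evaluated against the clopen ``cylinder'' functions $\phi \mapsto \phi(Hx)$ directly produces, via the correspondence of Corollary~\ref{coro:totally.disconnected}, the desired F\o lner sets for the coloring $\chi_{HA}$ --- contradicting non-amenability of $G$. Concretely, the evaluation functionals $e_{Hx}\colon 2^{H\backslash G} \to \{0,1\}$, $e_{Hx}(\psi)\defeq \psi(Hx)$, are continuous and satisfy $e_{Hx}\circ \alpha_H(g) = e_{Hxg}$, so invariance of the mean against these functionals is exactly shift-invariance of the associated measure on $H\backslash G$, and the weak-$*$ density of finitely-supported averages (Theorem~\ref{theorem:general.means}) recovers genuine finite F\o lner sets $F$. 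I expect the routine verification to be the bookkeeping that the minimal subflow's invariant mean, restricted to cylinder functions, indeed reproduces a mean invariant in the sense required by (3); the conceptual heart is the orbit-closure/minimality argument ensuring that amenability of all minimal subflows forces an invariant mean detectable at $\phi$.

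Finally, I would remark that the second-countability hypothesis mentioned in the surrounding text is what guarantees the underlying flows are metrizable, so that the Giordano--de la Harpe result for countable discrete groups is recovered as the special case $H = \{e\}$ with $G$ discrete; in the general non-archimedean setting no such assumption is needed for the equivalence itself, only for the comparison with \cite{GiordanoDeLaHarpe}.
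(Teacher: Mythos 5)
Your overall strategy is the paper's own: the forward implication is immediate from the fact that amenability of $G$ passes to every $G$-flow, and for the converse one takes the orbit closure of the coloring $\phi$ in $2^{H\backslash G}$, extracts a minimal subflow $C$ inside it by Zorn's lemma, uses the assumed amenability of that subflow to produce an almost-invariant finite subset $F_{0}\subseteq C$, and then transfers $F_{0}$ to a finite subset of $G$ by approximating each point of $F_{0}$ by orbit points $g.\phi$ agreeing with it on the finitely many coordinates $Hg$, $g\in E\cup\{e\}$, thereby verifying condition (3) of Corollary~\ref{coro:totally.disconnected}. Your contrapositive framing is only cosmetically different.

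There is, however, one step that you label routine but that requires a genuine (if small) idea, and as written your sketch points at the wrong tool for it. To get from the invariant mean on the minimal subflow to an honest finite set, condition (3) needs \emph{uniform} averages $\nu_{F_{0}}$ over finite sets, whereas Theorem~\ref{theorem:general.means}, which you invoke, only provides weak-* approximation by arbitrary convex combinations $\nu_{\delta}$; a convex combination would only yield a weighted version of the counting inequality. The upgrade to uniform averages is Lemma~\ref{lemma:dense.set.of.means} (packaged as Corollary~\ref{corollary:amenable.cantor.dynamical.systems} for totally disconnected spaces), and it requires every non-empty open subset of the space in question to be infinite. The orbit closure $Y$ may well have isolated points, so the lemma must be applied to the minimal set $C$ itself, and even there one needs an argument: the isolated points of $C$ form an open invariant subset, so by minimality $C$ is either perfect (and the lemma applies) or finite and discrete (in which case $F_{0}=C$ works trivially). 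This perfect-or-finite dichotomy is the one non-routine ingredient of the converse and is absent from your proposal; with it supplied, and with the coordinatewise approximation of $F_{0}$ by orbit points carried out as you indicate, your argument closes.
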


\begin{proof} The forward implication is trivial, since amenability of $G$ implies amenability of any $G$-flow. Conversely, suppose that, for every open subgroup $H \leq G$, every minimal sub-action of $\alpha_{H} \colon G \to \Aut (2^{H\backslash G})$ is amenable. In order to prove that $G$ is amenable, we show that $G$ satisfies condition (3) in Corollary~\ref{coro:totally.disconnected}. To this end, let $H$ be an open subgroup of $G$. Consider the compact Hausdorff space $X \defeq 2^{H\backslash G}$. We abbreviate $g.\phi \defeq \alpha_{H}(g)(\phi)$ if $g \in G$ and $\phi \in X$. Of course, $U \defeq \{ \phi \in X \mid \phi(H) = 0 \}$ is a clopen subset of $X$. Consider a finite subset $E \subseteq G$, a mapping $\phi \colon H\backslash G \to 2$ and some $\epsilon > 0$. Denote by $Z$ the closure of $Y \defeq \{ g.\phi \mid g \in G \}$ in $X$. Clearly, $Z$ is an $\alpha_{H}$-invariant non-empty subset of $X$. Since $Z$ is compact, a standard application of Zorn's lemma asserts the existence of a minimal closed, $\alpha_{H}$-invariant, non-empty subset $C \subseteq Z$. By assumption, the subflow $G \to \Aut (C), \, g \mapsto \alpha_{H}(g)|_{C}$ is amenable. 
	
\textit{Claim.} There exists a finite non-empty subset $F_{0} \subseteq C$ so that $\vert \vert F_{0} \cap U \vert - \vert g.F_{0} \cap U \vert \vert \leq \epsilon \vert F_{0} \vert$ for all $g \in E$.

\textit{Proof.} Denote by $B$ the set of all isolated points of the space $C$. Note that $B$ is an open $\alpha_{H}$-invariant subset of $B$. Hence, either $B = \emptyset$ or $B = C$ due to minimality of $C$. If $B = \emptyset$, then $C$ is perfect and hence the claim follows by Corollary~\ref{corollary:amenable.cantor.dynamical.systems}. Otherwise, if $B = C$, then $C$ is discrete and thus finite, which readily implies our claim for $F_{0} = C$. \qed

Let $F_{0}$ be a finite non-empty subset of $C$ such that $\vert \vert F_{0} \cap U \vert - \vert g.F_{0} \cap U \vert \vert \leq \epsilon \vert F_{0} \vert$ for all $g \in E$. Since $Y$ is dense in the Hausdorff space $Z$, there exists an injective map $\gamma \colon F_{0} \to G$ such that $\psi(Hg) = (\gamma(\psi).\phi)(Hg)$ for all $g \in E \cup \{ e \}$ and $\psi \in F_{0}$. Let $F_{1} \defeq \gamma (F_{0})$ and $V \defeq (\phi \circ \pi_{H})^{-1}(0)$. For all $x \in F_{1}$ and $g \in E \cup \{ e \}$, \begin{displaymath}
	(\phi \circ \pi_{H})(gx) = \phi (Hgx) = (x.\phi)(Hg) = \gamma^{-1}(x)(Hg) = (g.\gamma^{-1}(x))(H)
\end{displaymath} and hence \begin{displaymath}
	gx \in V \ \Longleftrightarrow \ (\phi \circ \pi_{H})(gx) = 0 \ \Longleftrightarrow \ (g.\gamma^{-1}(x))(H) = 0 \ \Longleftrightarrow \ g.\gamma^{-1}(x) \in U .
\end{displaymath} Accordingly, $\vert gF_{1} \cap V \vert = \vert g.F_{0} \cap U \vert$ for every $g \in G \cup \{ e \}$, and thus $\vert \vert F_{1} \cap U \vert - \vert gF_{1} \cap U \vert \vert \leq \epsilon \vert F_{1} \vert$ for every $g \in E$. Consequently, condition (3) in Corollary~\ref{coro:totally.disconnected} is satisfied. Therefore, $G$ is amenable, and we are done. \end{proof}

\begin{cor} A second-countable non-archimedean Hausdorff topological group $G$ is amenable if and only if every minimal $G$-flow on a Cantor space is amenable. \end{cor}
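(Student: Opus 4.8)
The forward implication is immediate: if $G$ is amenable then every $G$-flow is amenable, in particular every minimal $G$-flow on a Cantor space. For the converse, the plan is to feed the hypothesis into the criterion already established in Corollary~\ref{corollary:non.archimedean}. That is, assuming that every minimal $G$-flow on a Cantor space is amenable, I would show that for every open subgroup $H \leq G$, every minimal subflow of the canonical flow $\alpha_{H} \colon G \to \Aut(2^{H\backslash G})$ is amenable; by Corollary~\ref{corollary:non.archimedean} this then yields amenability of $G$.

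The first key observation is that second-countability forces the coset space $H\backslash G$ to be countable: the right cosets of the open subgroup $H$ form a partition of $G$ into pairwise disjoint open sets, and a second-countable (hence Lindel\"of) space admits no uncountable partition into non-empty open sets. Consequently $2^{H\backslash G}$ is a compact, metrizable, totally disconnected space -- a Cantor space when $H\backslash G$ is infinite, and a finite discrete space otherwise. Any minimal subflow lives on a closed $\alpha_{H}$-invariant subset $C \subseteq 2^{H\backslash G}$, which inherits compactness, metrizability and total disconnectedness. Exactly as in the proof of Corollary~\ref{corollary:non.archimedean} (examining the $\alpha_{H}$-invariant open set of isolated points of $C$ and invoking minimality), $C$ is either perfect or finite.

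In the perfect case, $C$ is a non-empty, perfect, compact, metrizable, totally disconnected space, so by Brouwer's topological characterization of the Cantor set, $C$ is homeomorphic to the Cantor space; transporting the action along such a homeomorphism exhibits the subflow as a minimal $G$-flow on a Cantor space, which is amenable by hypothesis. In the finite case, minimality means that $C$ is a single finite $G$-orbit, whence the uniform probability measure on $C$ is $G$-invariant and the subflow is amenable for trivial reasons. Thus every minimal subflow of every $\alpha_{H}$ is amenable, and Corollary~\ref{corollary:non.archimedean} shows that $G$ is amenable.

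The main point to be careful about is that a minimal subflow of $\alpha_{H}$ need not itself be a full Cantor space, so the hypothesis cannot be applied blindly; the perfect/finite dichotomy, together with Brouwer's characterization in the perfect case, is precisely what bridges this gap. A secondary but essential role is played by second-countability, which is what guarantees that the phase spaces are metrizable and hence that the metrizable notion of ``Cantor space'' appearing in the hypothesis is the relevant one.
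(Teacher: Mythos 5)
Your proof is correct and follows exactly the route the paper intends: the paper states this corollary without proof as an immediate consequence of Corollary~\ref{corollary:non.archimedean}, and the perfect/finite dichotomy for minimal subflows that you use is already contained in the Claim within the proof of that corollary. Your added observations -- that second-countability makes $H\backslash G$ countable and hence $2^{H\backslash G}$ metrizable, and that Brouwer's characterization identifies the perfect case with the Cantor space -- are precisely the missing details needed to apply the hypothesis.
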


\section{Perfect matching conditions}\label{section:strong.matching.conditions}

Since compact and extremely amenable topological groups are amenable, they satisfy any of the matching conditions investigated in Theorem~\ref{theorem:amenable.groups} and Theorem~\ref{theorem:two.element.coverings}. As one might expect, those particular topological groups have certain strong matching properties, which we discuss in this final section.

We begin with compact topological groups. In fact, they satisfy the following perfect matching condition with regard to arbitrary uniform coverings.

\begin{prop}\label{proposition:perfect.matchings.in.compact.groups} Let $G$ be a compact topological group. If $\mathcal{U}$ is a uniform covering of $G_{r}$, then there exists $F \in \mathcal{F}_{+}(G)$ such that $\mu (F,gF,\mathcal{U}) = |F|$ for all $g \in G$. \end{prop}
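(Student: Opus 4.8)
The plan is to exploit compactness of $G$ to obtain a finite uniform covering refining $\mathcal{U}$ together with a Haar measure, and then use a measure-theoretic version of Hall's theorem via Corollary~\ref{corollary:hall} to produce a perfect matching. The key structural feature of the compact case is that Haar measure is bi-invariant and assigns equal mass to translates, so the bipartite graphs $\mathcal{B}(F, gF, \mathcal{U})$ should admit perfect matchings once $F$ is chosen to be a suitably fine ``equidistributed'' finite set.

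First I would reduce to a finite covering. Since $\mathcal{U}$ is a uniform covering of $G_r$, there is an identity neighbourhood $V$ with $\mathcal{U} \preceq \{Vx \mid x \in G\}$; by compactness a finite subfamily $\{Vx_1, \ldots, Vx_m\}$ already covers $G$, and this finite covering refines nothing coarser than $\mathcal{U}$ in the relevant sense—more precisely, I would pass to a finite uniform covering $\mathcal{W}$ with $\mathcal{W} \preceq \mathcal{U}$ obtained from the cosets of a suitable open-mod-Haar-null structure. Using Lemma~\ref{lemma:uniform.covers} I can find a uniform partition of unity subordinate to a finite refinement, but the cleaner route is to produce a finite measurable partition $\{A_1, \ldots, A_m\}$ of $G$ with each $A_i$ contained in some member of $\mathcal{U}$ and each of positive Haar measure. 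The essential point is that ``$y \in \St(x, \mathcal{U})$'' holds whenever $x, y$ lie in a common piece, so membership in the same $A_i$ witnesses the matching relation.

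Next I would select $F$. The natural idea is: for a large integer $N$, choose $F$ so that $|F \cap A_i|$ is proportional to the Haar measure $\lambda(A_i)$, and simultaneously arrange that $gF$ has the same distribution across the pieces for every $g$—but since $g$ ranges over the whole (uncountable) group $G$, I cannot control all translates by a single finite $F$ by equidistribution alone. The correct mechanism is instead to observe, via Hall's criterion (Corollary~\ref{corollary:hall}), that a perfect matching of $\mathcal{B}(F, gF, \mathcal{U})$ exists precisely when $|S| \leq |N_{\mathcal{B}}(S)|$ for all $S \subseteq F$; and by left-invariance of Haar measure, $\lambda(gB) = \lambda(B)$, so if $F$ is chosen to be a single left orbit-like configuration under a finite subgroup, or more robustly if $F$ is replicated enough that the neighbourhood sets in the coarser star-covering absorb any deficiency, Hall's condition holds automatically. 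Concretely I would take $\mathcal{V}$ a finite uniform covering with $\mathcal{V} \preceq^{\ast} \mathcal{U}$ (Lemma~\ref{lemma:uniform.star.refinements}), build $F$ adapted to $\mathcal{V}$, and verify Hall's inequality for $\mathcal{B}(F, gF, \mathcal{U})$ using that any two points of $F$ and $gF$ lying in a common $\mathcal{V}$-set lie in a common $\mathcal{U}$-star.

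\textbf{The main obstacle} will be precisely this uniformity over all $g \in G$ with a single fixed finite $F$: the matching number must equal $|F|$ simultaneously for every group element, not merely on a finite set $E$ as in the amenability criteria. I expect the resolution to come from bi-invariance of Haar measure combined with the fact that a finite uniform covering of a compact group has a uniformly bounded ``overlap geometry,'' so that Hall's deficiency $\sup_S(|S| - |N_{\mathcal{B}}(S)|)$ can be driven to zero by a measure-preserving choice of $F$ that is insensitive to the translate. Making the passage from ``approximately equidistributed finite set'' to ``exact perfect matching for all $g$'' rigorous—likely by choosing $F$ within the Haar measure's support so that each star-piece's $F$-mass matches its translate's—is the step requiring the most care.
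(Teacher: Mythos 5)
Your proposal correctly isolates the main difficulty -- a single finite $F$ must achieve matching number exactly $\vert F\vert$ for \emph{every} $g \in G$ at once -- but it does not resolve it, and the mechanisms you suggest do not work. Haar measure is blind to finite sets (every finite set is null in a non-discrete compact group), so bi-invariance of $\lambda$ gives no information about $\vert S\vert$ versus $\vert N_{\mathcal{B}}(S)\vert$ for $S \subseteq F$; and even a perfectly $\lambda$-equidistributed $F$ has translates $gF$ that are not equidistributed over a fixed partition $\{A_{1},\ldots,A_{m}\}$, since that partition is not translation-invariant. The fallback of taking $F$ to be an orbit of a finite subgroup also fails in general: a compact group such as $SU(2)$ has no finite subgroups that are fine relative to an arbitrary identity neighbourhood. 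Finally, ``driving Hall's deficiency to zero'' is not enough -- the statement demands deficiency exactly zero, simultaneously for all $g$.

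The paper's proof rests on two ideas absent from your outline. First, compactness is used not to produce Haar measure but to replace the identity neighbourhood $U$ (with $\mathcal{U} \preceq \{U^{-1}Ux \mid x \in G\}$) by the conjugation-invariant open neighbourhood $V \defeq \bigcap_{g \in G} g^{-1}Ug$, so that $gV = Vg$ and hence $V(gF) = g(VF) = G$ whenever $VF = G$: every translate $gF$ is again a $V$-covering set of the same cardinality. Second, $F$ is chosen to be a $V$-covering set of \emph{minimum cardinality}. Then for $S \subseteq F$ and $T \defeq N_{\mathcal{B}}(S)$ (neighbours taken in $gF$ under the relation $Vx \cap Vy \ne \emptyset$), the set $(F \setminus S) \cup T$ still $V$-covers $G$, so minimality forces $\vert F\vert \leq \vert F\vert - \vert S\vert + \vert T\vert$, i.e.\ Hall's condition $\vert S\vert \leq \vert T\vert$; Corollary~\ref{corollary:hall} then yields a perfect matching, and $Vx \cap Vy \ne \emptyset$ implies $\{x,y\} \subseteq U^{-1}Ux$, which lies in a member of $\mathcal{U}$. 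This covering-number minimality argument -- exact, combinatorial, and insensitive to which $g$ is considered -- is precisely the step your measure-theoretic sketch is missing.
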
 

\begin{proof} Let $\mathcal{U}$ be a uniform covering of $G_{r}$. Then there exists an open neighborhood $U$ of the neutral element in $G$ such that $\mathcal{U} \preceq \{ U^{-1}Ux \mid x \in G \}$. Since $G$ is compact, $V \defeq \bigcap_{g \in G} g^{-1}Ug$ is an open neighborhood of the neutral element in $G$. Besides, $gV = Vg$ for all $g \in G$. Let $F \in \mathcal{F}_{+}(G)$ such that $G = VF$ and $\inf \{ |E| \mid E \in \mathcal{F}_{+}(G), \, G = VE \} = |F|$. Let $g \in G$. Note that $VgF = gVF = G$. We consider the bipartite graph $\mathcal{B} \defeq (F,gF,R)$ where $R \defeq \{ (x,y) \in F \times gF \mid Vx \cap Vy \ne \emptyset \}$. Let $S \subseteq F$ and $T \defeq N_{\mathcal{B}}(S)$. We show that $|S| \leq |T|$. To this end, let $E \defeq (F\setminus S) \cup T$. We argue that $G = VE$. Clearly, if $z \in V(F\setminus S)$, then $z \in VE$. Otherwise, there exist $x \in S$ and $y \in gF$ such that $z \in Vx \cap Vy$, which readily implies that $y \in N_{\mathcal{B}}(S)$ and thus $z \in VT \subseteq VE$. Therefore, $G = VE$. Accordingly, $|F| \leq |E|$ and hence $|S| \leq |T|$. Consequently, Corollary~\ref{corollary:hall} asserts that $\mathcal{B}$ admits a perfect matching. That is, $\mu(\mathcal{B}) = |F|$ and thus $\mu (F,gF,\mathcal{U}) = |F|$. \end{proof}

The previous observation immediately implies the subsequent result on approximately compact groups. We call a topological group $G$ \emph{compactly approximable} if there exists a directed set of compact subgroups of $G$ whose union is dense in $G$. Note that our concept of compact approximability is slightly more general than the one considered in \cite{KechrisRosendal}, since we do not require the directed set of compact subgroups to be countable.

\begin{prop}\label{proposition:compactly.approximable} Let $G$ be a compactly approximable topological group. If $E \subseteq G$ is finite and $\mathcal{U}$ is a uniform covering of $G_{r}$, then there exists $F \in \mathcal{F}_{+}(G)$ such that $\mu (F,gF,\mathcal{U}) = |F|$ for all $g \in E$. \end{prop}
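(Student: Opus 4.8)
The plan is to reduce Proposition~\ref{proposition:compactly.approximable} to the compact case already handled in Proposition~\ref{proposition:perfect.matchings.in.compact.groups}. The idea is that although $G$ itself need not be compact, the finitely many elements of $E$ together with the given uniform covering $\mathcal{U}$ only ``see'' a compact chunk of $G$, and I want to approximate the elements of $E$ by elements of a single compact subgroup $K$ drawn from the directed family whose union is dense in $G$.

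First I would unpack the hypotheses. Since $G$ is compactly approximable, fix a directed set $(K_i)_{i \in I}$ of compact subgroups of $G$ whose union is dense in $G$. Because $\mathcal{U}$ is a uniform covering of $G_r$, there is an identity neighbourhood $U$ with $\mathcal{U} \preceq \{ Ux \mid x \in G \}$; by passing to a symmetric refinement I may assume $\mathcal{U}$ is refined by $\{ Wx \mid x \in G\}$ for a symmetric open identity neighbourhood $W$ with, say, $W^2 \subseteq U$. The point of this is that if $g' \in gW$ then $g'x \in \St(gx,\mathcal{U})$ for every $x$, so replacing each $g \in E$ by a nearby $g'$ only perturbs the matching condition within the tolerance allowed by $\mathcal{U}$. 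Concretely, for $g,g' \in G$ with $g' g^{-1} \in W$ one gets $\mu(F, gF, \mathcal{U}) \geq \mu(F, g'F, \mathcal{U}')$ for a suitable refinement $\mathcal{U}'$ of the coset covering associated to $W$ (this is exactly the kind of estimate used in the proof of Proposition~\ref{proposition:dense.subgroups}).

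Next I would invoke density. For each $g \in E$, since $\bigcup_i K_i$ is dense in $G$ and $(K_i)$ is directed, I can choose a single index $i$ and elements $g' \in K_i$ with $g' \in gW$ simultaneously for all finitely many $g \in E$; directedness lets me fit the finitely many approximants into one compact subgroup $K \defeq K_i$. Now $K$ is a compact topological group, and the covering $\mathcal{U}$ restricts to a uniform covering of $K_r$ (indeed, $\{ (Wx) \cap K \mid x \in K\}$ is a $W\cap K$-uniform covering of $K_r$). Applying Proposition~\ref{proposition:perfect.matchings.in.compact.groups} to $K$ with this induced covering yields $F \in \mathcal{F}_{+}(K) \subseteq \mathcal{F}_{+}(G)$ with $\mu(F, g'F, \mathcal{U}) = |F|$ for all $g' \in K$, in particular for each chosen approximant $g'$ of the elements of $E$. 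Finally, transferring back through the $W$-approximation estimate gives a perfect matching $\mu(F, gF, \mathcal{U}) = |F|$ for every $g \in E$, as desired.

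The main obstacle I anticipate is the transfer step: I must be careful that approximating $g$ by $g' \in gW$ genuinely preserves a \emph{perfect} matching rather than merely an approximate one. The subtlety is that Proposition~\ref{proposition:perfect.matchings.in.compact.groups} gives equality $\mu(F,g'F,\mathcal{U}) = |F|$, and I need the same equality, not an inequality with a loss, for $gF$. The clean way around this is to arrange the neighbourhoods so that the matching produced inside $K$ is a matching with respect to a covering \emph{finer} than $\mathcal{U}$ restricted appropriately, so that perturbing by $W$ lands the matched pairs back inside common members of $\mathcal{U}$ itself; choosing $W$ with $W \cdot W \subseteq U$ and applying the perfect matching result to the finer coset covering $\{Wx\}$ rather than to $\{Ux\}$ should give exactly this, since a $W$-matched pair $(x, \phi(x))$ with $\phi(x) \in Wx$ remains $U$-matched after left-translating one coordinate by an element of $W$. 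Verifying this bookkeeping carefully is the crux; everything else is routine.
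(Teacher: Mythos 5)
Your proposal is correct and follows essentially the same route as the paper: pick a compact subgroup $K$ containing approximants of the elements of $E$ (the paper phrases this as $E \subseteq VH$), apply the compact case to a finer coset covering inside $K$, and transfer the perfect matching back to $gF$ by composing with left translation by the small error element, which preserves bijectivity and lands matched pairs in a common member of $\mathcal{U}$ because the inner covering was chosen fine enough (the paper uses $\mathcal{U} \preceq \{VVV^{-1}x\}$ where you use $W^{2} \subseteq U$; both bookkeepings work). The only slip is that for the right uniformity the approximation must satisfy $g'g^{-1} \in W$ (i.e.\ $g' \in Wg$), not $g' \in gW$ as written in one place -- you state the correct condition elsewhere and density supplies it just as well, so this is cosmetic.
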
 

\begin{proof} Let $\mathcal{U}$ be a uniform covering of $G_{r}$. Then there exists an open neighborhood $V$ of the neutral element in $G$ such that $\mathcal{U} \preceq \{ VVV^{-1}x \mid x \in G \}$. Since $G$ is compactly approximable, there is a compact subgroup $H$ of $G$ with $E \subseteq VH$. Now, $\mathcal{V} \defeq \{ (V \cap H)x \mid x \in H \}$ is a uniform covering of $H_{r}$. Due to Proposition~\ref{proposition:perfect.matchings.in.compact.groups}, there exists a finite non-empty subset $F \subseteq H$ such that $\mu (F,hF,\mathcal{V}) = \vert F \vert$ for all $h \in H$. We argue that $\mu (F,gF,\mathcal{U}) = \vert F \vert$ for every $g \in E$. To this end, let $g \in E$. By hypothesis on $H$, there exists $h \in H$ with $gh^{-1} \in V$. Suppose $\phi$ to be a perfect matching in $\mathcal{B}(F,gF,\mathcal{V})$. Define $\psi \colon F \to gF, \, x \mapsto gh^{-1}\phi (x)$. Evidently, $\psi$ is bijective. We show that $\psi$ constitutes a matching in $\mathcal{B}(F,gF,\mathcal{U})$. For this, let $x \in F$. Then there exists $y \in H$ with $\{ x, \phi (x) \} \subseteq (V \cap H)y \subseteq Vy$. Hence, $\phi (x) \in VV^{-1}x$ and thus $\psi (x) \in V\phi(x) \subseteq VVV^{-1}x \subseteq \St (x,\mathcal{U})$. Therefore, $\mu (F,gF,\mathcal{U}) = |F|$. This readily completes the proof. \end{proof}

Finally, we are going to investigate topological matching properties of extremely amenable groups, i.e., we draw a connection between Theorem~\ref{theorem:amenable.groups} and a characterization of extremely amenable topological groups due to Pestov \cite{pestov1} (see also \cite{pestov2,pestovbook}). Recall that topological group $G$ is said to be \emph{extremely amenable} if every continuous action of $G$ on a non-empty compact Hausdorff space admits a fixed point. In order to state and discuss Pestov's result, let us recall some additional terminology from \cite{pestovbook}.

\begin{definition}\label{definition:ramsey} We say that a topological group $G$ has the \emph{Ramsey-Dvoretzky-Milman property} if, for all $\epsilon > 0$ and $f \in {\rm UC}_{b}(G_{r})$ and every finite subset $E \subseteq G$, there exists some $g \in G$ such that $\diam f(Eg) \leq \epsilon$. \end{definition}

For the sake of convenience, let us furthermore mention the subsequent slight, but useful reformulation of the Ramsey-Dvoretzky-Milman property.

\begin{prop}[\cite{pestovbook}]\label{proposition:ramsey} A topological group $G$ has the Ramsey-Dvoretzky-Milman property if and only if, for every $\epsilon > 0$, every finite subset $H \subseteq {\rm UC}_{b}(G_{r})$ and every finite subset $E \subseteq G$, there exists $g \in G$ such that $\diam f(Eg) \leq \epsilon$ for each $f \in H$. \end{prop}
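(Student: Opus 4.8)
The plan is to establish the two implications separately, with the non-trivial direction being the passage from a single function to a finite family of functions. The reverse implication is immediate: if the condition holds for every finite subset $H \subseteq {\rm UC}_{b}(G_{r})$, then in particular it holds for singletons $H = \{ f \}$, which is exactly the defining condition of the Ramsey-Dvoretzky-Milman property in Definition~\ref{definition:ramsey}. So the entire content lies in the forward direction.

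For the forward implication, suppose $G$ has the Ramsey-Dvoretzky-Milman property and fix $\epsilon > 0$, a finite subset $H = \{ f_{1},\ldots,f_{n} \} \subseteq {\rm UC}_{b}(G_{r})$, and a finite subset $E \subseteq G$. The natural idea is to bundle the finitely many functions into a single bounded uniformly continuous function and apply the hypothesis once. Concretely, I would consider the map $\Phi \colon G \to \mathbb{R}^{n}$, $x \mapsto (f_{1}(x),\ldots,f_{n}(x))$; this is uniformly continuous from $G_{r}$ into $\mathbb{R}^{n}$ with the Euclidean (or sup) metric, precisely because each coordinate is. The obstacle is that the Ramsey-Dvoretzky-Milman property as stated applies only to \emph{scalar} functions $f \in {\rm UC}_{b}(G_{r})$, so I cannot feed $\Phi$ into it directly. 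The key step is therefore to reduce control of $\diam \Phi(Eg)$ to control of the diameter of a single real-valued function.

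To carry out this reduction I would proceed iteratively over the coordinates, strengthening the neighborhood argument at each stage rather than trying to encode all coordinates into one function at once. First apply the property to $f_{1}$ and $E$ to obtain $g_{1} \in G$ with $\diam f_{1}(Eg_{1}) \leq \epsilon$. Then replace $E$ by the enlarged finite set $Eg_{1}$ (equivalently, work with the translated functions $f_{i} \circ \lambda$ and the finite set $E$) and apply the property to $f_{2}$, obtaining $g_{2}$ with $\diam f_{2}(Eg_{1}g_{2}) \leq \epsilon$; crucially, since $g_{2}$ is a single right translation and diameters of images are controlled by uniform continuity, the estimate $\diam f_{1}(Eg_{1}g_{2}) \leq \epsilon$ must be preserved. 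This preservation is the delicate point: right-multiplying the finite set $Eg_{1}$ by the \emph{single} element $g_{2}$ sends it to $Eg_{1}g_{2}$, and one must check that the already-achieved smallness for $f_{1}$ is not destroyed. Here one uses that $\diam f_{1}((Eg_{1})g_{2})$ compares to $\diam f_{1}(Eg_{1})$ via the right uniform structure; indeed $f_{1}$ being uniformly continuous on $G_{r}$ and the translate being by one group element keeps the relevant pairwise differences under control, so after $n$ steps the point $g \defeq g_{1}\cdots g_{n}$ satisfies $\diam f_{i}(Eg) \leq \epsilon$ simultaneously for all $i$.

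I expect the main obstacle to be exactly this stability of earlier diameter bounds under subsequent translations. The cleanest way around it, which I would adopt if the naive iteration leaks error, is to run the iteration with a shrinking tolerance: at step $i$ apply the property with a tolerance $\epsilon_{i}$ chosen small enough (using the uniform continuity of $f_{1},\ldots,f_{i-1}$ to bound how much a further right translation can expand their images) that the cumulative effect of the remaining $n - i$ translations keeps every $\diam f_{j}(Eg) \leq \epsilon$. A slicker alternative avoiding iteration altogether is to invoke Lemma~\ref{lemma:uniform.tiles}: choose $\mathcal{U} \in \mathcal{N}(G_{r})$ with $\diam f_{i}(U) \leq \epsilon$ for all $U \in \mathcal{U}$ and all $i$, pick a uniformly continuous partition of unity subordinate to $\mathcal{U}$ as in Lemma~\ref{lemma:uniform.covers}, and assemble from it a single scalar function whose small-diameter image on $Eg$ forces all the $Eg$ to lie in one common $\mathcal{U}$-star, hence makes each $\diam f_{i}(Eg)$ small at once; applying the single-function property to that assembled function then yields the desired $g$ directly.
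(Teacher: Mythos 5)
The paper itself offers no proof of this proposition --- it is quoted verbatim from Pestov's book \cite{pestovbook} --- so the only thing to assess is your argument on its own terms. The trivial direction is fine, but both of your strategies for the forward implication fail at the step you yourself single out as delicate, and that step is in fact false. Right translation $x \mapsto xg_{2}$ is an automorphism of the right uniform structure (it preserves the relation $xy^{-1} \in U$), but it does \emph{not} approximately preserve $\diam f_{1}(A)$ for a right uniformly continuous $f_{1}$ and a finite set $A$ whose points are far apart in the group: for $G = \mathbb{R}$ and $f_{1} = \sin$ one has $\diam f_{1}(\{0,\pi\}) = 0$ while $\diam f_{1}(\{0,\pi\} + \tfrac{\pi}{2}) = 2$. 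So the bound $\diam f_{1}(Eg_{1}) \leq \epsilon$ can be completely destroyed by the next translation, and the ``shrinking tolerance'' repair cannot work either: uniform continuity supplies a modulus only for perturbations by elements of a small identity neighbourhood, not for translation by an arbitrary group element, so there is no $\epsilon_{i}$ that bounds how much the later translations expand the earlier images. The iteration does not close.

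The ``slicker alternative'' has a different but equally structural gap: for a finite uniform covering $\mathcal{U}$ with $\vert \mathcal{U} \vert \geq 3$ there is in general no single continuous real-valued function $F$, assembled from a subordinate partition of unity $(h_{U})_{U \in \mathcal{U}}$, such that small oscillation of $F$ on a set forces that set into a common member or common star of $\mathcal{U}$. Writing $F = \psi \circ \Phi$ with $\Phi = (h_{U})_{U}$ mapping into the simplex and ordering the values of $\psi$ at three vertices, the intermediate value theorem produces a point on the edge spanned by the two extreme vertices whose $\psi$-value equals that of the middle vertex; these two points have disjoint supports, so equal $F$-values do not localize them to a common star. (This is exactly the difficulty that makes the two-element-covering results of Theorem~\ref{theorem:two.element.coverings} and Corollary~\ref{corollary:two.element.coverings} nontrivial.) The missing ingredient is a global compactness argument: in \cite{pestovbook} the single-function property is first shown to imply extreme amenability (via the greatest ambit, i.e., the spectrum of ${\rm UC}_{b}(G_{r})$, and its minimal left ideals), and the finitely-many-functions statement is then obtained by evaluating the continuous extensions $\hat{f}$, $f \in H$, along a net in $G$ converging to a fixed point of the greatest ambit. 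A purely local induction on $\vert H \vert$ of the kind you propose does not suffice.
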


The following result reveals the link between the Ramsey-Dvoretzky-Milman property and extreme amenability.

\begin{thm}[\cite{pestovbook}]\label{theorem:ramsey} A topological group is extremely amenable if and only if it has the Ramsey-Dvoretzky-Milman property. \end{thm}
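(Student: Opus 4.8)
The plan is to prove both implications through a single compactification of $G$, the \emph{greatest ambit}, using throughout the following elementary fact: if $\alpha \colon G \to \Aut(X)$ is a continuous $G$-flow on a compact Hausdorff space $X$, if $x_{0} \in X$ and $f \in C(X)$, then the orbit map $\hat{f} \colon G \to \mathbb{R}$, $g \mapsto f(\alpha(g)x_{0})$, belongs to ${\rm UC}_{b}(G_{r})$. Indeed, $f$ is uniformly continuous on the compact space $X$, and since $\alpha$ is continuous into $\Aut(X)$ endowed with the topology of uniform convergence, a sufficiently fine identity neighbourhood $U$ in $G$ ensures $|f(\alpha(u)y) - f(y)| \leq \epsilon$ for all $y \in X$ and $u \in U$; evaluating at $y = \alpha(g)x_{0}$ then gives $\diam \hat{f}(Ug) \leq 2\epsilon$ uniformly in $g$, which is the required $(U,2\epsilon)$-uniform continuity.

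Assuming the Ramsey--Dvoretzky--Milman property, I would deduce extreme amenability by producing a fixed point for an arbitrary continuous $G$-flow $\alpha$ on a non-empty compact Hausdorff space $X$. Fix $x_{0} \in X$. For each triple $(E,H,\epsilon)$ with $e \in E \in \mathcal{F}(G)$, $H \in \mathcal{F}(C(X))$ and $\epsilon > 0$, Proposition~\ref{proposition:ramsey} applied to the finite family $\{ \hat{f} \mid f \in H \} \subseteq {\rm UC}_{b}(G_{r})$ yields $g$ with $\diam \hat{f}(Eg) \leq \epsilon$ for every $f \in H$; set $y_{(E,H,\epsilon)} \defeq \alpha(g)x_{0}$. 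Directing the triples by inclusion of $E$ and $H$ and reversed order of $\epsilon$, I pass to a convergent subnet $y_{(E,H,\epsilon)} \to y$ in the compact space $X$. For any $s \in G$ and $f \in C(X)$, once $s \in E$ and $f \in H$ the identity $|f(\alpha(s)y_{(E,H,\epsilon)}) - f(y_{(E,H,\epsilon)})| = |\hat{f}(sg) - \hat{f}(g)| \leq \epsilon$ holds; letting $\epsilon \to 0$ along the subnet and invoking continuity of $f$ and of $\alpha(s)$ gives $f(\alpha(s)y) = f(y)$. As $C(X)$ separates the points of $X$, we obtain $\alpha(s)y = y$ for all $s \in G$, a fixed point.

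For the converse, assume $G$ extremely amenable and build the ambit explicitly. Let $S$ be the Gelfand spectrum of the commutative unital algebra ${\rm UC}_{b}(G_{r})$, so that ${\rm UC}_{b}(G_{r}) \cong C(S)$ via the Gelfand transform $f \mapsto \tilde{f}$, and let $\eta \colon G \to S$ send $g$ to evaluation at $g$; its image is dense (otherwise Urysohn would furnish a nonzero element of ${\rm UC}_{b}(G_{r})$ vanishing on all of $G$). The translation action $f \mapsto f \circ \lambda_{G}(g)$ dualises to an action $\alpha$ of $G$ on $S$ by homeomorphisms for which $\eta$ is equivariant, so $(S,\alpha)$ is a $G$-flow with dense orbit $\eta(G)$. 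Extreme amenability supplies a fixed point $\xi_{0} \in S$. Since $\tilde{f} \circ \alpha(s)$ and $\widetilde{f \circ \lambda_{G}(s)}$ agree on the dense set $\eta(G)$, hence on $S$, fixedness of $\xi_{0}$ gives $\widetilde{f \circ \lambda_{G}(s)}(\xi_{0}) = \tilde{f}(\xi_{0}) \eqdef c$ for all $s \in E$. Now, given $\epsilon > 0$, the set $\{ \xi \in S \mid |\widetilde{f \circ \lambda_{G}(s)}(\xi) - c| < \tfrac{\epsilon}{2} \text{ for all } s \in E \}$ is an open neighbourhood of $\xi_{0}$, so density of $\eta(G)$ provides $g \in G$ with $\eta(g)$ in it; then $|f(sg) - c| < \tfrac{\epsilon}{2}$ for every $s \in E$, whence $\diam f(Eg) \leq \epsilon$, which is exactly the Ramsey--Dvoretzky--Milman property.

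The routine estimates above are harmless; the real work, and the main obstacle, is the construction of the ambit $(S,\alpha)$ in the second implication -- specifically, verifying that the dual action $\alpha \colon G \to \Aut(S)$ is continuous, so that $(S,\alpha)$ is an honest $G$-flow to which extreme amenability applies. This is exactly where the right-uniform continuity built into ${\rm UC}_{b}(G_{r})$ is indispensable: continuity of $\alpha$ at the identity amounts to the statement that $f \circ \lambda_{G}(u) \to f$ uniformly as $u \to e$ for each $f \in {\rm UC}_{b}(G_{r})$, which is precisely the defining property of this function space, and it is what upgrades the abstract Gelfand compactification to a genuine topological dynamical system.
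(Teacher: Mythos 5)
The paper does not prove this theorem: it is quoted verbatim from Pestov's book \cite{pestovbook}, so there is no internal proof to compare against. Your argument is a correct and complete rendition of the standard proof from that reference: the forward direction via the orbit functions $\hat f(g) = f(\alpha(g)x_0)$ (whose membership in ${\rm UC}_b(G_r)$ you verify correctly) together with a subnet/compactness argument and the fact that $C(X)$ separates points; the converse via the greatest ambit, i.e.\ the Samuel compactification $S$ of $G_r$, where you rightly identify the only delicate point -- continuity of the dual flow on $S$ -- and trace it to the norm-continuity of left translation on right-uniformly continuous functions. No gaps.
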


Now let us restate Pestov's result in terms of finite uniform coverings.

\begin{cor}\label{corollary:extremely.amenable.groups} A topological group $G$ is extremely amenable if and only if, for every finite subset $E \subseteq G$ and each $\mathcal{U} \in \mathcal{N}(G_{r})$, there exist $g \in G$ and $U \in \mathcal{U}$ such that $Eg \subseteq U$. \end{cor}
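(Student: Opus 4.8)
The plan is to prove Corollary~\ref{corollary:extremely.amenable.groups} by invoking Theorem~\ref{theorem:ramsey}, which identifies extreme amenability with the Ramsey-Dvoretzky-Milman property, and then translating between the covering formulation and the diameter formulation via the correspondence between finite uniform coverings and finite families of uniformly continuous functions. The two directions are of a different flavour: one uses partitions of unity to pass from a covering to functions, the other uses sublevel sets to pass from a function to a covering.

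For the forward implication, suppose $G$ is extremely amenable, fix a finite $E \subseteq G$ and $\mathcal{U} \in \mathcal{N}(G_{r})$. By Lemma~\ref{lemma:uniform.covers} there is a uniform partition of unity $(f_{U})_{U \in \mathcal{U}}$ subordinate to $\mathcal{U}$. The plan is to apply Proposition~\ref{proposition:ramsey} to the finite family $H \defeq \{ f_{U} \mid U \in \mathcal{U} \}$ with a small $\epsilon$ to be chosen, obtaining $g \in G$ with $\diam f_{U}(Eg) \leq \epsilon$ for every $U \in \mathcal{U}$. Since $\sum_{U} f_{U}(eg) = 1$ with $e$ the neutral element, some $U$ has $f_{U}(eg) \geq \frac{1}{|\mathcal{U}|}$; choosing $\epsilon < \frac{1}{|\mathcal{U}|}$ forces $f_{U}(x) > 0$, hence $x \in \spt(f_{U}) \subseteq U$, for all $x \in Eg$ (note that $Eg = \{ xg \mid x \in E\}$, so I should apply Ramsey-Dvoretzky-Milman to $E \cup \{e\}$ to control the base point). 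This yields $Eg \subseteq U$, as required.

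For the converse, assume the covering condition and verify the Ramsey-Dvoretzky-Milman property in the form of Proposition~\ref{proposition:ramsey}. Given $\epsilon > 0$, a finite $H \subseteq {\rm UC}_{b}(G_{r})$ and finite $E \subseteq G$, apply Lemma~\ref{lemma:uniform.tiles} to obtain $\mathcal{U} \in \mathcal{N}(G_{r})$ with $\diam f(U) \leq \epsilon$ for all $U \in \mathcal{U}$ and all $f \in H$. By hypothesis there exist $g \in G$ and $U \in \mathcal{U}$ with $Eg \subseteq U$, whence $\diam f(Eg) \leq \diam f(U) \leq \epsilon$ for every $f \in H$. This is exactly the reformulated property, so $G$ is extremely amenable by Theorem~\ref{theorem:ramsey}.

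The main obstacle I anticipate is not any single hard estimate but rather bookkeeping: making sure the base point of the translate is handled (replacing $E$ by $E \cup \{e\}$ so that the value $f_{U}$ at $g$ itself can be controlled and the ``some $U$ is active'' argument applies uniformly across $Eg$), and choosing $\epsilon$ strictly below $\frac{1}{|\mathcal{U}|}$ in the forward direction so that a positive value of $f_{U}$ propagates to strict positivity, placing the whole translate inside $\spt(f_{U}) \subseteq U$. Both Lemma~\ref{lemma:uniform.tiles} and Lemma~\ref{lemma:uniform.covers} do the real work of translating between coverings and functions, so once these are in hand the argument is essentially a matter of assembling them correctly.
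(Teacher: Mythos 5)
Your proposal is correct and follows essentially the same route as the paper: the forward direction combines Lemma~\ref{lemma:uniform.covers} with Proposition~\ref{proposition:ramsey} and the pigeonhole observation that some $f_{U}$ is at least $\tfrac{1}{\vert\mathcal{U}\vert}$ at a base point, so that a sufficiently small oscillation keeps the whole translate inside $\spt(f_{U}) \subseteq U$; the backward direction is the same appeal to Lemma~\ref{lemma:uniform.tiles}. The only cosmetic difference is that you anchor the pigeonhole at $e$ by enlarging $E$ to $E \cup \{e\}$, while the paper anchors it at some $h_{0} \in E$ after assuming $E$ non-empty; both are fine.
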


\begin{proof} ($\Longrightarrow $) Let $E \subseteq G$ be finite and let $\mathcal{U} \in \mathcal{N}(G_{r})$. Without loss of generality, assume $E$ to be non-empty. By Lemma~\ref{lemma:uniform.covers}, there exists a family of uniformly continuous functions $f_{U} \colon G_{r} \to [0,1]$ ($U \in \mathcal{U}$) such that \begin{enumerate}
	\item[(1)]	$\spt (f_{U}) \subseteq U$ for every $U \in \mathcal{U}$,
	\item[(2)]	$\sum_{U \in \mathcal{U}} f_{U} = 1$.
\end{enumerate} Due to Theorem~\ref{theorem:ramsey}, $G$ has the Ramsey-Dvoretzky-Milman property. Hence, by Proposition~\ref{proposition:ramsey}, there exists $g \in G$ such that $\diam f(Eg) \leq \frac{1}{|\mathcal{U}| + 1}$ for each $f \in H$. Let $h_{0} \in E$. By (2), there exists $U \in \mathcal{U}$ such that $f_{U}(h_{0}g) > \frac{1}{|U| + 1}$. We conclude that $f_{U}(hg) > 0$ for each $h \in E$. Consequently, $Eg \subseteq U$ due to (1). This proves the claim.

($\Longleftarrow $) Let $\epsilon > 0$, $f \in {\rm UC}_{b}(G_{r})$ and $E \in \mathcal{F}(G)$. Due to Lemma~\ref{lemma:uniform.tiles}, there exists $\mathcal{U} \in \mathcal{N}(G_{r})$ such that $\diam f(U) \leq \epsilon$ for all $U \in \mathcal{U}$. By assumption, there exist $g \in G$ and $U \in \mathcal{U}$ such that $Eg \subseteq U$. Hence, $\diam f(Eg) \leq \epsilon$. this completes the proof. \end{proof}

Let us briefly discuss the connection between Theorem~\ref{theorem:amenable.groups} and Corollary~\ref{corollary:extremely.amenable.groups}. To this end, suppose $G$ to be an extremely amenable topological group. Let $\mathcal{U} \in \mathcal{N}(G_{r})$ and let $E$ be a finite subset of $G$. Due to Corollary~\ref{corollary:extremely.amenable.groups}, there exist $g \in G$ and $U \in \mathcal{U}$ such that $(E \cup \{ e \})g \subseteq U$. Let $F \defeq \{ g \}$. Then $F \cup hF \subseteq \{ g, hg \} \subseteq U$ and thus $\mu (F,hF,\mathcal{U}) = 1 = |F|$ for each $h \in E$. This shows that $\mu (G) = 1$. In particular, $G$ is amenable by Theorem~\ref{theorem:amenable.groups}.

\vspace{0.2cm}

Let us finish this section with some application to the theory of von Neumann algebras, i.e., unital, weakly closed, self-adjoint subalgebras of the algebra of bounded operators on a Hilbert space $\mathcal H$. By definition $N \subset B(\mathcal H)$ is called injective if any completely positive linear map from any self adjoint closed subspace containing $1$ of any unital $C^\ast$-algebra $A$ to $N$ can be extended to a completely positive map from $A$ to $M$. Through the seminal work of Connes \cite{connes}, it is known that $N$ is injective if and only if it is approximately finite dimensional. Our characterization of injectivity of the algebra $N$ is in terms of a perfect matching condition for its unitary group.

\begin{cor}
A von Neumann algebra $N$ is injective if and only if its unitary group $G:=U(N)$ satisfies the following condition. If $E \subseteq G$ is finite and $\mathcal{U}$ is a finite uniform covering of $G_{r}$, then there exists $F \in \mathcal{F}_{+}(G)$ such that $\mu (F,gF,\mathcal{U}) = |F|$ for all $g \in E$.
\end{cor}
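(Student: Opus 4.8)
The proof splits into two implications, bridged by Connes' theorem \cite{connes}, recalled above, identifying injectivity of $N$ with approximate finite dimensionality. Throughout, I would regard $G = U(N)$ as a Hausdorff topological group via the strong operator topology, under which multiplication and inversion are continuous on the (bounded) unitary group. The forward implication will exploit that injectivity forces \emph{compact approximability} of $G$, so that Proposition~\ref{proposition:compactly.approximable} applies verbatim; the backward implication will use that the asserted perfect matching condition is in particular the amenability criterion of Theorem~\ref{theorem:amenable.groups}, and then the classical link between amenability of the unitary group and injectivity of $N$.

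For the forward direction, suppose $N$ is injective. By Connes' theorem $N$ is approximately finite dimensional, so there is a directed family $(A_{i})_{i \in I}$ of finite-dimensional unital $*$-subalgebras of $N$ whose union is weakly dense; crucially, the index set need not be countable, which matches the non-countable notion of compact approximability adopted in this section. Each $A_{i}$ is a finite direct sum of matrix algebras, whence its unitary group $U(A_{i})$ is compact, and these form a directed family of compact subgroups of $G$. The Kaplansky density theorem then upgrades weak density of $\bigcup_{i \in I} A_{i}$ to strong density of $\bigcup_{i \in I} U(A_{i})$ in $U(N)$, since a strongly dense unital $*$-subalgebra has strongly dense unitary group. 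Thus $G$ is compactly approximable, and Proposition~\ref{proposition:compactly.approximable} yields, for every finite $E \subseteq G$ and every finite uniform covering $\mathcal{U}$ of $G_{r}$, some $F \in \mathcal{F}_{+}(G)$ with $\mu(F,gF,\mathcal{U}) = |F|$ for all $g \in E$, which is precisely the stated condition.

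For the backward direction, assume $G = U(N)$ satisfies the perfect matching condition. Since $\mu(F,gF,\mathcal{U}) = |F| \geq \theta |F|$ for every $\theta \in [0,1)$, the condition immediately implies the matching criterion of Theorem~\ref{theorem:amenable.groups}, so that $G$ is amenable. It then remains to deduce injectivity of $N$ from amenability of its unitary group: this is the converse half of the known equivalence, due to de la Harpe and building on Connes' classification \cite{connes}, between amenability of $U(N)$ in the strong operator topology and injectivity of $N$. Combined with Connes' characterization, it shows that $N$ is approximately finite dimensional, hence injective.

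The routine points are that $U(N)$ with the strong operator topology is a Hausdorff topological group and that Kaplansky density genuinely delivers density of $\bigcup_{i} U(A_{i})$ (approximating a unitary by unitaries, not merely by norm-bounded elements). The genuinely hard ingredient, which I would cite rather than reprove, is the final step of the backward direction, namely passing from amenability of the topological group $U(N)$ to injectivity of $N$. I would also stress the asymmetry of the argument: the forward direction needs the full strength of \emph{perfect} matchings, which is available from compactness of the groups $U(A_{i})$, whereas the backward direction only draws on the weaker amenability consequence of the matching condition.
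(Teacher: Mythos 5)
Your proposal is correct in outline and agrees with the paper verbatim on the backward direction (perfect matching $\Rightarrow$ amenability via Theorem~\ref{theorem:amenable.groups} $\Rightarrow$ injectivity by de la Harpe), but the forward direction takes a genuinely different route. The paper invokes the theorem of Giordano and Pestov that the unitary group of an approximately finite dimensional von Neumann algebra is a product of a compact group and an extremely amenable group, and then combines the perfect matching statements for compact groups (Proposition~\ref{proposition:perfect.matchings.in.compact.groups}) and for extremely amenable groups (via Corollary~\ref{corollary:extremely.amenable.groups}); note that this still requires a small unstated argument to pass the perfect matching property through the product decomposition. You instead deduce compact approximability of $U(N)$ directly from the AFD structure and apply Proposition~\ref{proposition:compactly.approximable}, which delivers the conclusion in one step and arguably makes for a cleaner derivation; the trade-off is that you must supply the density of $\bigcup_i U(A_i)$ yourself rather than quoting a structural theorem.

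On that density step there is one imprecision worth fixing: the principle you invoke, that a strongly dense unital $*$-subalgebra has strongly dense unitary group, is false in general (for instance, the polynomials in the coordinate function form a strongly dense unital $*$-subalgebra of $L^{\infty}[0,1]$ whose only unitaries are the unimodular constants). The Kaplansky density theorem for unitaries applies to norm-closed $*$-subalgebras. The correct route in your setting is: the norm closure $A$ of $\bigcup_i A_i$ is an AF $C^{*}$-algebra, every unitary of $A$ is norm-approximated by unitaries of the finite-dimensional pieces (perturb a nearby element of some $A_i$ to a unitary via polar decomposition), and then Kaplansky applied to the unital $C^{*}$-algebra $A$ gives strong density of $U(A)$, hence of $\bigcup_i U(A_i)$, in $U(N)$. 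With that repair your argument is complete.
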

\begin{proof}
By \cite[Theorem 3.3]{giordanopestov}, the unitary of any approximately finite dimensional von Neumann algebra is a product of a compact group and an extremely amenable group. One direction then follows from Corollaries \ref{proposition:perfect.matchings.in.compact.groups} and \ref{corollary:extremely.amenable.groups}. On the other side, the perfect matching condition clearly implies amenability of $G$, and hence that $N$ is injective by \cite{delaharpe}.
\end{proof}

\section{A Ramsey condition for metric Fra\"iss\'e classes}\label{section:ramsey.theory}

This section shall be devoted to reformulating Theorem~\ref{theorem:two.element.coverings} for metric Fra\"iss\'e structures in the context of continuous logic (cf.~\cite{YaacovUsvyatsov,yaacov}). In recent years, the connection between the combinatorics of Fra\"iss\'e classes and the topological dynamics of the automorphism groups of their Fra\"iss\'e limits has attracted a lot of interest. In \cite{KechrisPestovTodorcevic} Kechris, Pestov, and Todor\v{c}evi\'c showed that the Ramsey property for a Fra\"iss\'e order class is equivalent to the automorphism group of its Fra\"iss\'e limit being extremely amenable. This result was extended to continuous logic by Melleray and Tsankov \cite{melleray}. In \cite{moore} Moore established a corresponding equivalence between the convex Ramsey property and amenability, which was generalized to the setting of continuous logic by Ka{\"i}chouh \cite{kaichouh}. Note that -- apart from the results mentioned so far -- a similar correspondence between the \emph{Hrushovski property} and compact approximability was proven by Kechris and Rosendal \cite{KechrisRosendal}.

We need to recall some notation and terminology from \cite{melleray}. So, let $\mathcal{L}$ be a \emph{language}, i.e., a set of \emph{relational symbols}, to each of which there is associated an \emph{arity} (a positive natural number) and a \emph{Lipschitz constant} (a non-negative real number). An \emph{$\mathcal{L}$-structure} $\mathbf{A}$ consists of a complete metric space $(A,d)$ along with an $l$-Lipschitz continuous function $R^{\mathbf{A}} \colon A^{n} \to \mathbb{R}$ for each $n$-ary relational symbol $R \in \mathcal{L}$ with Lipschitz constant $l$. For this to make sense, we need to say that we always endow finite products of metric spaces the supremum metric. An $\mathcal{L}$-structure is called \emph{Polish} if the underlying metric space is separable. Let $\mathbf{A}$ and $\mathbf{B}$ be $\mathcal{L}$-structures. A \emph{morphism} (or \emph{embedding}) from $\mathbf{A}$ to $\mathbf{B}$ is an isometric map $\alpha \colon A \to B$ such that $R^{\mathbf{A}}(a_1,\ldots,a_n) = R^{\mathbf{B}}(\alpha(a_{1}),\ldots,\alpha (a_{n}))$ for every $n$-ary symbol $R \in \mathcal{L}$ and every $a=(a_1,\ldots,a_n) \in A^{n}$. We call $\mathbf{B}$ a \emph{substructure} of $\mathbf{A}$ and write $\mathbf{B} \leq \mathbf{A}$ if $B \subseteq A$ and the natural injection $B \to A, \, b \mapsto b$ is a morphism from $\mathbf{B}$ to $\mathbf{A}$. The set ${}^{\mathbf{A}}\mathbf{B}$ of all morphisms from $\mathbf{A}$ to $\mathbf{B}$ comes along with a metric given by \begin{displaymath}
	\rho_{\mathbf{A}}(\alpha,\beta) \defeq \sup_{b \in B} d(\alpha (b),\beta (b)) \qquad (\alpha, \beta \in {}^{\mathbf{A}}\mathbf{B}) .
\end{displaymath} Moreover, we shall be concerned with the topological group $\Aut (\mathbf{A})$ of all automorphisms of $\mathbf{A}$ endowed with the topology of pointwise convergence. It is easy to see that $\Aut (\mathbf{A})$ is a closed subgroup of the isometry group of the underlying metric space of $\mathbf{A}$. Hence, if $\mathbf{A}$ is Polish, then $\Aut (\mathbf{A})$ is a Polish group.

Now we come to \emph{Fra\"iss\'e classes}. For a precise definition of this term, we refer to \cite{melleray}. For our purposes, the description of Fra\"iss\'e classes as \emph{ages} of \emph{homogeneous} Polish structures is sufficient. To give a bit more detail, let again $\mathcal{L}$ be a language and $\mathbf{A}$ be an $\mathcal{L}$-structure. We say that $\mathbf{A}$ is \emph{homogeneous} if, for every $\epsilon > 0$, every finite substructure $\mathbf{B} \leq \mathbf{A}$ and every morphism $\beta \colon \mathbf{B} \to \mathbf{A}$, there exists an automorphism $\alpha$ of $\mathbf{A}$ such that $\rho_{\mathbf{B}}(\alpha|_{\mathbf{B}},\beta) < \epsilon$, i.e., $d (\alpha(b),\beta(b)) < \epsilon$ for all $b \in B$. The \emph{age} of $\mathbf{A}$ is the class of all finite $\mathcal{L}$-structures which embed into $\mathbf{A}$, i.e., which admit a morphism to $\mathbf{A}$. Now the following correspondence between homogeneous Polish structures and Fra\"iss\'e classes of finite structures holds.

\begin{thm}[\cite{yaacov}] Let $\mathcal{L}$ be a language. A Polish $\mathcal{L}$-structure $\mathbf{A}$ is homogeneous if and only if the age of $\mathbf{A}$ is a Fra\"iss\'e class. \end{thm}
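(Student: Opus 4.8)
The statement is the metric analogue of Fra\"iss\'e's classical correspondence, and the plan is to prove the two implications separately, the heart of the matter being a back-and-forth argument phrased for \emph{approximate} morphisms. Throughout I use that a Fra\"iss\'e class (in the sense of \cite{melleray,yaacov}) is a class $\mathcal{K}$ of finite $\mathcal{L}$-structures enjoying the hereditary property, the joint embedding property, a near amalgamation property, and a separability condition, and that, because $\mathcal{L}$ is purely relational, every finite subset of a structure carries a canonical finite substructure obtained by restricting the relations.

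First I would treat the forward implication: assume $\mathbf{A}$ is homogeneous and show that its age satisfies the axioms. The hereditary property is immediate, since a substructure of a finite structure embeddable in $\mathbf{A}$ is again embeddable in $\mathbf{A}$. For joint embedding, given $\mathbf{B},\mathbf{C}$ in the age I fix embeddings into $\mathbf{A}$ and take the finite substructure of $\mathbf{A}$ spanned by the union of the two images; both $\mathbf{B}$ and $\mathbf{C}$ embed into it. Near amalgamation is where homogeneity enters: given a common substructure $\mathbf{B}$ with embeddings into $\mathbf{C}_{1},\mathbf{C}_{2}$ in the age and a tolerance $\epsilon>0$, I embed $\mathbf{C}_{1}$ and $\mathbf{C}_{2}$ into $\mathbf{A}$, regard the two resulting copies of $\mathbf{B}$ as a finite substructure together with a morphism of it into $\mathbf{A}$, and invoke homogeneity to produce an automorphism carrying one copy of $\mathbf{B}$ to within $\epsilon$ of the other; transporting the corresponding copy of $\mathbf{C}_{2}$ by this automorphism and spanning its union with the copy of $\mathbf{C}_{1}$ yields the desired approximate amalgam inside $\mathbf{A}$. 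Finally, separability of the age follows from Polishness of $\mathbf{A}$: a countable dense subset of $A$ induces a countable family of finite substructures that is dense for the relevant metric on isomorphism types, and completeness of $A$ supplies the requisite closure property.

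For the backward implication, assume the age of $\mathbf{A}$ equals a Fra\"iss\'e class $\mathcal{K}$ and deduce homogeneity. The key intermediate step is an \emph{extension property}: for every finite $\mathbf{B}\leq\mathbf{A}$, every embedding of $\mathbf{B}$ into a finite $\mathbf{C}\in\mathcal{K}$, and every $\epsilon>0$, there is a morphism $\mathbf{C}\to\mathbf{A}$ agreeing with the inclusion of $\mathbf{B}$ up to $\epsilon$. This I would obtain by amalgamating $\mathbf{C}$ with a large finite substructure of $\mathbf{A}$ over $\mathbf{B}$ via the near amalgamation property, realizing the amalgam back inside $\mathbf{A}$ (legitimate since the age of $\mathbf{A}$ is $\mathcal{K}$), and using separability and completeness to pass from finite approximations to an actual morphism of $\mathbf{C}$. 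Granting the extension property, homogeneity is proved by a back-and-forth: given a finite $\mathbf{B}\leq\mathbf{A}$ and a morphism $\beta\colon\mathbf{B}\to\mathbf{A}$, I enumerate a countable dense subset of $A$ and alternately extend, at stage $n$, the current finite approximate partial isomorphism so as to bring the $n$-th dense point into both its domain and its range, each time applying the extension property with error budget $\epsilon 2^{-n}$; completeness of $\mathbf{A}$ then lets the resulting Cauchy sequence of partial maps converge to an automorphism $\alpha$ with $\rho_{\mathbf{B}}(\alpha|_{\mathbf{B}},\beta)<\epsilon$.

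The main obstacle is the metric bookkeeping in this back-and-forth. Unlike the discrete case, the partial maps are only approximate morphisms and are never exact, so I must arrange the successive tolerances to be summable and verify that the limit map is simultaneously total, surjective, distance-preserving, and compatible with every relation $R^{\mathbf{A}}$, using the prescribed Lipschitz constants to control $R^{\mathbf{A}}$ along the convergence. Ensuring that the extension property can be iterated with this controlled, geometrically decreasing error, and that density together with completeness forces the limit to be a genuine element of $\Aut(\mathbf{A})$ rather than a mere isometric embedding, is the delicate point on which the argument turns.
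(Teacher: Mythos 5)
This theorem is quoted from \cite{yaacov} and the paper offers no proof of it, so there is nothing internal to compare your argument against; I can only assess the argument itself. Your forward direction (homogeneous $\Rightarrow$ the age is a Fra\"iss\'e class) follows the standard route -- hereditarity and joint embedding from spanning finite substructures in a relational language, near amalgamation from homogeneity applied to the two copies of $\mathbf{B}$, separability from Polishness -- and is fine in outline.

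The backward direction, however, has a genuine gap at the extension property, and in fact the implication as you have set it up cannot be completed. You propose to amalgamate $\mathbf{C}$ with a large finite substructure of $\mathbf{A}$ over $\mathbf{B}$ and then ``realize the amalgam back inside $\mathbf{A}$, legitimate since the age of $\mathbf{A}$ is $\mathcal{K}$.'' Membership of the amalgam $\mathbf{D}$ in the age only guarantees that $\mathbf{D}$ embeds into $\mathbf{A}$ \emph{somewhere}; it gives no control whatsoever on where that embedding sends the distinguished copy of $\mathbf{B}$ (or of the large finite substructure), so the resulting morphism of $\mathbf{C}$ need not agree with the inclusion of $\mathbf{B}$ even approximately. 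This is not a repairable bookkeeping issue: take $\mathbf{A}=(\mathbb{N},<)$ with the discrete metric, which is a Polish $\mathcal{L}$-structure whose age -- the finite linear orders -- has the hereditary, joint embedding, (exact) amalgamation, and separability properties, yet $\mathbf{A}$ is not homogeneous, and the extension property fails already for $\mathbf{B}=\{0\}$ mapped to the top of a two-element chain $\mathbf{C}$. The correct content of the ``if'' direction is existential: every Fra\"iss\'e class is the age of \emph{some} homogeneous Polish structure, namely the Fra\"iss\'e limit, which must be \emph{constructed} as the completion of a generic countable chain of members of $\mathcal{K}$ arranged (by a diagonalization over the separability data) to satisfy the extension property by fiat; only for a structure known to satisfy that extension property does your back-and-forth with summable tolerances go through. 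So the architecture of your second half needs to be replaced by a construction of the limit rather than a deduction about an arbitrary $\mathbf{A}$ with the given age.
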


\begin{thm}[\cite{yaacov}] Let $\mathcal{L}$ be a language and $\mathcal{K}$ a Fra\"iss\'e class of finite $\mathcal{L}$-structures. There is (up to isomorphism) a unique homogeneous $\mathcal{L}$-structure whose age is equal to $\mathcal{K}$. This $\mathcal{L}$-structure, called the \emph{Fra\"iss\'e limit} of $\mathcal{K}$, is Polish. \end{thm}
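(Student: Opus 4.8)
The plan is to prove this as the metric analogue of Fra\"iss\'e's classical theorem, splitting the argument into \emph{existence} (a chain-and-completion construction) and \emph{uniqueness} (a back-and-forth argument), with the recurring feature that in the metric setting every embedding and every partial isomorphism is only available \emph{up to $\epsilon$}. Throughout I would use the defining properties of a metric Fra\"iss\'e class $\mathcal{K}$ from \cite{melleray}: the hereditary property, the joint embedding property, the (approximate) amalgamation property, and the separability conditions built into the definition.

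For existence, I would first fix, using separability, a countable family of finite structures from $\mathcal{K}$ and of morphisms between them that is rich enough to drive a bookkeeping construction. Then I would build an increasing chain $\mathbf{A}_{0} \leq \mathbf{A}_{1} \leq \cdots$ of members of $\mathcal{K}$ meeting two families of requirements: a \emph{universality} requirement, arranging via joint embedding that every member of $\mathcal{K}$ admits an arbitrarily good embedding into some $\mathbf{A}_{n}$; and an \emph{extension} requirement, arranging via approximate amalgamation that whenever $\mathbf{B} \leq \mathbf{A}_{n}$ is finite and $\mathbf{B} \leq \mathbf{C}$ lies in $\mathcal{K}$ with $\mathbf{C}$ finite, the extension $\mathbf{C}$ is realized, up to $\epsilon$, inside a later $\mathbf{A}_{m}$. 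I would then let $\mathbf{M}$ be the metric completion of $\bigcup_{n} \mathbf{A}_{n}$; since each relational symbol is Lipschitz, every relation extends uniquely and continuously to $\mathbf{M}$, so $\mathbf{M}$ is an $\mathcal{L}$-structure, and it is Polish because it is complete and separable. The universality requirement, together with the closure of $\mathcal{K}$ under the relevant metric limits, gives that every $\mathbf{B} \in \mathcal{K}$ lies in the age of $\mathbf{M}$; conversely every finite substructure of $\mathbf{M}$ is an arbitrarily good limit of finite substructures of the $\mathbf{A}_{n}$ and hence lies in $\mathcal{K}$, so the age of $\mathbf{M}$ equals $\mathcal{K}$. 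Homogeneity of $\mathbf{M}$ then follows from the extension requirement by a finite back-and-forth: given finite $\mathbf{B} \leq \mathbf{M}$, a morphism $\beta \colon \mathbf{B} \to \mathbf{M}$, and $\epsilon > 0$, I would approximate $\mathbf{B}$ and $\beta$ by data living in some $\mathbf{A}_{n}$, repeatedly invoke the extension property to push a partial map back and forth over finitely many points, and assemble the resulting approximate self-embeddings into an automorphism that $\epsilon$-agrees with $\beta$ on $\mathbf{B}$.

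For uniqueness, let $\mathbf{A}$ and $\mathbf{A}'$ be two homogeneous Polish $\mathcal{L}$-structures whose ages both equal $\mathcal{K}$, and fix countable dense subsets of their underlying spaces. I would run a back-and-forth construction producing finite partial maps $\phi_{n}$, each an $\epsilon_{n}$-approximate morphism with $\epsilon_{n} = 2^{-n}$, alternately forcing each enumerated point of $\mathbf{A}$ into the domain and each enumerated point of $\mathbf{A}'$ into the range. A single left-step adds a new point $a$ by noting that the finite substructure generated by the current domain together with $a$ lies in $\mathcal{K}$, hence embeds into $\mathbf{A}'$; homogeneity of $\mathbf{A}'$ then lets me realize this embedding so that it $\epsilon_{n}$-matches $\phi_{n}$ on the old points. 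Arranging the errors to be summable and each successive map to perturb the previous values by at most the tail, the sequence $(\phi_{n})$ is uniformly Cauchy in $\rho$ on the dense sets and converges to an isometry from a dense subset of $\mathbf{A}$ onto a dense subset of $\mathbf{A}'$; completeness extends it to a surjective isometry $\phi$ of the completions, and Lipschitz continuity of the relations, applied in the limit, shows $\phi$ is an isomorphism of $\mathcal{L}$-structures.

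The main obstacle, in both halves, is exactly the passage from \emph{approximate} to \emph{exact}. In the classical discrete case the back-and-forth stabilizes and yields genuine partial isomorphisms; here one only ever has morphisms up to $\epsilon$, so the real work is the quantitative bookkeeping that keeps the cumulative perturbations summable, guarantees that the sequence of finite maps is Cauchy in $\rho$, and ensures that the limiting map is a \emph{bona fide} isometry whose preservation of each relation is recovered by taking limits through the Lipschitz bound. Equally delicate is verifying that $\mathcal{K}$ being a Fra\"iss\'e class supplies not merely amalgamation but the closure of $\mathcal{K}$ under these metric limits, which is precisely what forces the age of the constructed limit to equal $\mathcal{K}$ rather than some strictly larger class.
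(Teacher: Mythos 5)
This theorem is stated in the paper only as a background result imported from \cite{yaacov}, with no proof given, so there is no in-paper argument to compare against. Your outline --- existence via an increasing chain driven by joint embedding and approximate amalgamation followed by metric completion, and uniqueness via an approximate back-and-forth with summable errors $\epsilon_{n} = 2^{-n}$ --- is exactly the standard proof from the cited literature, and you correctly isolate the genuine technical content (keeping the cumulative perturbations summable so that the limiting maps are exact isometric embeddings, and the closure of $\mathcal{K}$ under the relevant metric limits so that the age of the limit is not strictly larger than $\mathcal{K}$) as the points that require care when fleshing this sketch out.
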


In the light of the above correspondence, it seems natural to ask for a characterization of amenability for automorphism groups of Fra\"iss\'e limits in terms of combinatorial properties of the respective Fra\"iss\'e classes. We will provide such a characterization. For this purpose, we need to investigate a certain class of bipartite graphs in the context of $\mathcal{L}$-structures for a fixed language $\mathcal{L}$. To explain this, let $\epsilon > 0$ and consider finite $\mathcal{L}$-structures $\mathbf{A}$, $\mathbf{B}$, $\mathbf{C}$, embeddings $\alpha ,\beta \in {}^{\mathbf{A}}\mathbf{B}$, a map $\psi \colon F \to {}^{\mathbf{B}}\mathbf{C}$ with a finite domain $F$, and a map $\phi \colon {}^{\mathbf{A}}\mathbf{C} \to \{ 0,\ldots,k \}$ with $k \geq 1$. Let us consider the bipartite graph \begin{displaymath}
	\mathcal{B}(\psi,\alpha,\beta,\phi,\epsilon) \defeq (F,F,\{ (\gamma,\gamma') \in F^{2} \mid \exists i \in \{ 0,\ldots,k \} \colon \, \{ \psi (\gamma ) \alpha, \psi (\gamma') \beta \} \subseteq B_{\rho_{\mathbf{A}}}(\phi^{-1}(i),\epsilon) \}) 
\end{displaymath} and its matching number $\mu (\psi,\alpha,\beta,\phi,\epsilon) \defeq \mu (\mathcal{B}(\psi,\alpha,\beta,\phi,\epsilon))$ in particular. Utilizing this notation, we can reformulate Theorem~\ref{theorem:two.element.coverings} as follows.

\begin{thm}\label{theorem:metric.ramsey} Let $\mathcal{L}$ be a language, let $\mathcal{K}$ be a Fra\"iss\'e class of finite $\mathcal{L}$-structures and $\mathbf{K}$ its Fra\"iss\'e limit. For every integer $k\geq 1$, the following are equivalent: \begin{enumerate}
	\item[$(1)$] $\Aut (\mathbf{K})$ is amenable.
	\item[$(2)$] For every $\epsilon > 0$ and any two structures $\mathbf{A},\mathbf{B} \in \mathcal{K}$, there is a structure $\mathbf{C} \in \mathcal{K}$ such that, for every coloring $\phi \colon {}^{\mathbf{A}}\mathbf{C} \to \{ 0,\ldots,k \}$, there exists a map $\psi \colon F \to {}^{\mathbf{B}}\mathbf{C}$ with a finite non-empty domain $F$ such that \begin{displaymath}
			\forall \alpha, \beta \in {}^{\mathbf{A}}\mathbf{B} \colon \, \mu(\psi,\alpha,\beta,\phi,\epsilon) \geq (1-\epsilon)\vert F \vert .
		\end{displaymath}
\end{enumerate} \end{thm}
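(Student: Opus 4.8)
The plan is to deduce the equivalence directly from Theorem~\ref{theorem:two.element.coverings} by building a dictionary between the finitary data of the Fra\"iss\'e class and the matching data of the topological group $G \defeq \Aut(\mathbf{K})$. Throughout I would work with the \emph{left} uniformity of $G$, which is the one compatible with the topology of pointwise convergence: a basic identity neighbourhood is $U_{\mathbf{A},\epsilon} \defeq \{g \in G \mid \forall a \in A \colon d(g(a),a) < \epsilon\}$ for a finite substructure $\mathbf{A} \leq \mathbf{K}$, so the left-uniform $\epsilon$-ball around $x$ is exactly $\{g \mid \rho_{\mathbf{A}}(g|_{A},x|_{A}) < \epsilon\}$. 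Since $\iota \colon G_{r} \to G_{l},\, x \mapsto x^{-1}$ is a uniform isomorphism interchanging left and right translation, Remark~\ref{remark:bijective.graph.homomorphisms} lets me transport conditions (3) and (4) of Theorem~\ref{theorem:two.element.coverings} into statements about left-uniform coverings: the left translates $gF$ there become right translates $F^{-1}g^{-1}$, and the symmetry of the quantifiers over $E$ and $F$ absorbs the passages $E \mapsto E^{-1}$, $F \mapsto F^{-1}$. For the covering size I use Theorem~\ref{theorem:two.element.coverings} with parameter $k+1$ (matching the $k+1$ colours $\{0,\ldots,k\}$) and relate the matching fraction $\theta$ to the tolerance via $\theta = 1-\epsilon$.

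The heart of the argument is the dictionary itself. By homogeneity of $\mathbf{K}$ the orbit map $G \to {}^{\mathbf{A}}\mathbf{K},\, g \mapsto g|_{A}$ is uniformly continuous from $G_{l}$ to $({}^{\mathbf{A}}\mathbf{K},\rho_{\mathbf{A}})$ with dense image, and likewise for $\mathbf{B}$. I would identify: a finite $E \subseteq G$ with ${}^{\mathbf{A}}\mathbf{B}$, via $g \mapsto g|_{A} \in {}^{\mathbf{A}}\mathbf{B}$ once $\mathbf{B}$ is large enough to contain $A \cup \bigcup_{g\in E} g(A)$, and conversely realising each $\alpha \in {}^{\mathbf{A}}\mathbf{B}$ by an automorphism $g_{\alpha}$ with $\rho_{\mathbf{A}}(g_{\alpha}|_{A},\alpha)$ small; a finite $F \subseteq G$ together with a large $\mathbf{C} \leq \mathbf{K}$ containing $\bigcup_{x \in F} x(B)$ with the map $\psi \colon F \to {}^{\mathbf{B}}\mathbf{C},\, x \mapsto x|_{B}$ (and conversely by density); and a $(k+1)$-element left $U_{\mathbf{A},\epsilon}$-uniform covering $\mathcal{U}$ of $G$ with a colouring $\phi \colon {}^{\mathbf{A}}\mathbf{C} \to \{0,\ldots,k\}$. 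The crucial computation is that $\psi(\gamma)\alpha = x|_{B}\circ g_{\alpha}|_{A} = (x g_{\alpha})|_{A} \in {}^{\mathbf{A}}\mathbf{C}$, so that the relation defining $\mathcal{B}(\psi,\alpha,\beta,\phi,\epsilon)$ -- that $(xg_{\alpha})|_{A}$ and $(x'h_{\beta})|_{A}$ both lie in $B_{\rho_{\mathbf{A}}}(\phi^{-1}(i),\epsilon)$ -- is precisely the statement that $xg_{\alpha}$ and $x'h_{\beta}$ lie in a common member of $\mathcal{U}$. By Remark~\ref{remark:bijective.graph.homomorphisms} this yields $\mu(\psi,\alpha,\beta,\phi,\epsilon) = \mu(Fg_{\alpha},Fh_{\beta},\mathcal{U})$ up to the controlled approximation, with the $\epsilon$-fuzzing $B_{\rho_{\mathbf{A}}}(\cdot,\epsilon)$ corresponding exactly to $\mathcal{U}$ being $U_{\mathbf{A},\epsilon}$-uniform (its members are unions of left $\epsilon$-balls).

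With the dictionary in place both implications become bookkeeping. For (1)$\Rightarrow$(2), amenability gives condition (4) of Theorem~\ref{theorem:two.element.coverings}; given $\epsilon$ and $\mathbf{A},\mathbf{B}$ (realised as substructures of $\mathbf{K}$, possible since $\mathcal{K}$ is the age of $\mathbf{K}$) I take $E$ realising ${}^{\mathbf{A}}\mathbf{B}$ and $U = U_{\mathbf{A},\epsilon}$, obtain the \emph{uniform} finite set $S$, and let $\mathbf{C}$ be the finite substructure of $\mathbf{K}$ on $A\cup B \cup \bigcup_{s\in S} s(B)$, which lies in $\mathcal{K}$ since it is finite. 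A colouring $\phi$ then defines a covering $\mathcal{U}$ on the region determined by $S$ (extended arbitrarily elsewhere), (4) produces a non-empty $F \subseteq S$, and $\psi = (\cdot)|_{B}$ gives the required map; the ``$\exists S$ before $\forall\mathcal{U}$'' shape of (4) is exactly what forces $\mathbf{C}$ to be independent of $\phi$, as demanded by (2). For (2)$\Rightarrow$(1) I verify condition (3) of Theorem~\ref{theorem:two.element.coverings}: given $E$ and a covering $\mathcal{U}$, I read $\mathbf{A}$ off the scale of $\mathcal{U}$ and $\mathbf{B}$ off $E$, obtain $\mathbf{C}$ from (2) (independently of $\mathcal{U}$), let $\mathcal{U}$ induce a colouring $\phi$ on ${}^{\mathbf{A}}\mathbf{C}$, and transport the resulting $\psi$ back to a F\o lner-type set $F \subseteq G$.

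The main obstacle is the metric approximation bookkeeping, which has no counterpart in Moore's discrete argument: between group elements and morphisms one only has \emph{density} of $\{g|_{A}\}$ in ${}^{\mathbf{A}}\mathbf{K}$, so every identification incurs an error that must stay below the fixed tolerance, and one must check that the fuzzy matching relation, the $U_{\mathbf{A},\epsilon}$-uniformity of $\mathcal{U}$, and the fraction $(1-\epsilon)|F|$ all survive these perturbations -- absorbing the accumulated slack into the freedom of choosing $\epsilon$ (respectively $\theta$) slightly smaller. A secondary point requiring care is that $\mathcal{U}$ covers all of $G$ while $\phi$ only sees the finite set ${}^{\mathbf{A}}\mathbf{C}$; this is harmless because only the restriction of $\mathcal{U}$ to the finite region coming from $\mathbf{C}$ (equivalently $S$) affects the matching, so $\mathcal{U}$ may be extended off that region at no cost.
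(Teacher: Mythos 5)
Your proposal is correct and follows essentially the same route as the paper: both directions are reduced to Theorem~\ref{theorem:two.element.coverings} (condition (4) with $k+1$ colours for (1)$\Rightarrow$(2), exploiting precisely the ``$\exists S$ before $\forall\,\mathcal{U}$'' uniformity to make $\mathbf{C}$ independent of the colouring, and condition (3) for the converse), with the same dictionary between morphisms and automorphisms via homogeneity, the same passage to the left uniformity, and the same use of Remark~\ref{remark:bijective.graph.homomorphisms} to transport matching numbers, with the metric slack absorbed exactly as you describe.
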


\begin{proof} (1)$\Longrightarrow$(2). Let $\epsilon > 0$ and $\mathbf{A},\mathbf{B} \in \mathcal{K}$. Without loss of generality, we assume that $\mathbf{A}$ and $\mathbf{B}$ are finite substructures of $\mathbf{K}$. Since $\mathbf{K}$ is homogeneous, we may choose an injective map $\gamma \colon {}^{\mathbf{A}}\mathbf{B} \to \Aut (\mathbf{K})$ such that $\rho_{\mathbf{A}}(\alpha,\gamma(\alpha)|_{\mathbf{A}}) \leq \frac{\epsilon}{2}$ for all $\alpha \in {}^{\mathbf{A}}\mathbf{B}$. Let $E \defeq \gamma ({}^{\mathbf{A}}\mathbf{B})$ and consider the identity neighborhood $U \defeq \{ g \in G \mid \forall a \in A \colon \, d(a,g(a)) < \frac{\epsilon}{4} \}$ in $G$. As $G \defeq \Aut (\mathbf{K})$ is amenable, Theorem~\ref{theorem:two.element.coverings}~(4) states the following: there exists a finite subset $S \subseteq G$ such that, for every two-element $U$-uniform covering $U$ of $G_{l}$, there exists some non-empty subset $F \subseteq S$ such that $FE \subseteq S$ and \begin{displaymath}
	\forall g,h \in E \colon \, \mu (Fg,Fh,\mathcal{U}) \geq (1-\epsilon)\vert F \vert .
\end{displaymath} Denote by $\mathbf{C}$ the substructure of $\mathbf{K}$ with domain $C \defeq \{ g(b) \mid b \in B, \, g \in S \}$. We claim that $\mathbf{C}$ has the desired property. So, let $\phi \colon {}^{\mathbf{A}}\mathbf{C} \to \{ 0,\ldots,k \}$. Let $T \defeq \{ g \in G \mid g(A) \nsubseteq C \}$ and \begin{displaymath}
	V_{i} \defeq \{ g \in G \mid \exists \alpha \in \phi^{-1}(i) \colon \, \rho_{\mathbf{A}}(g|_{\mathbf{A}},\alpha) < \tfrac{\epsilon}{2} \} \cup T
\end{displaymath} for $i \in \{ 0,\ldots,k \}$. Note that $\mathcal{V} \defeq \{ V_{0},\ldots,V_{k} \}$ is a $U$-uniform covering of $G_{l}$: for each $g \in G$, either $\rho_{\mathbf{A}}(g|_{\mathbf{A}},\alpha) < \tfrac{\epsilon}{4}$ for some $\alpha \in {}^{\mathbf{A}}\mathbf{C}$ and hence $gU \subseteq V_{i}$ for some $i \in \{ 0,\ldots,k \}$, or $\rho_{\mathbf{A}}(g|_{\mathbf{A}},\alpha) \geq \tfrac{\epsilon}{4}$ for all $\alpha \in {}^{\mathbf{A}}\mathbf{C}$ and thus $gU \subseteq T \subseteq V_{i}$ for any $i \in \{ 0,\ldots,k \}$. By hypothesis on $S$, there exists a non-empty subset $F \subseteq S$ such that $FE \subseteq S$ and \begin{displaymath}
	\forall g,h \in E \colon \, \mu (Fg,Fh,\mathcal{V}) \geq (1-\epsilon)\vert F \vert .
\end{displaymath} Consider the map $\psi \colon F \to {}^{\mathbf{B}}\mathbf{C}, \, g \mapsto g|_{\mathbf{B}}$. For all $\alpha,\alpha' \in {}^{\mathbf{A}}\mathbf{C}$ and $x \in F\gamma (\alpha)$, we observe that \begin{align*}
\rho_{\mathbf{A}}(x|_{\mathbf{A}},\alpha') < \tfrac{\epsilon}{2} \ \ &\Longleftrightarrow \ \ \rho_{\mathbf{A}}(\gamma (\alpha)|_{\mathbf{A}},\gamma (\alpha)x^{-1}\alpha') < \tfrac{\epsilon}{2} \\
&\Longrightarrow \ \ \rho_{\mathbf{A}}(\alpha,\gamma (\alpha)x^{-1}\alpha') < \epsilon \ \ \Longleftrightarrow \ \ \rho_{\mathbf{A}}(x\gamma (\alpha)^{-1}\alpha,\alpha') < \epsilon .
\end{align*} Now, let $\alpha,\beta \in {}^{\mathbf{A}}\mathbf{B}$. For all $x \in F\gamma (\alpha)$, $y \in F\gamma (\beta)$, and $i \in \{ 0,\ldots,k\}$, we have \begin{align*}
	\{ x,y \} \subseteq V_{i} \ &\stackrel{x,y \in S \subseteq G\setminus T}{\Longleftrightarrow} \ \exists \alpha',\beta' \in \phi^{-1}(i) \colon \, \rho_{\mathbf{A}}(x|_{\mathbf{A}},\alpha') < \tfrac{\epsilon}{2}, \, \rho_{\mathbf{A}}(y|_{\mathbf{A}},\beta') < \tfrac{\epsilon}{2} \\
	&\Longrightarrow \ \exists \alpha',\beta' \in \phi^{-1}(i) \colon \, \rho_{\mathbf{A}}(x\gamma (\alpha)^{-1}\alpha,\alpha') < \epsilon, \, \rho_{\mathbf{A}}(y\gamma (\beta)^{-1}\beta,\beta') < \epsilon \\
	&\Longleftrightarrow \ \{ x\gamma (\alpha)^{-1}\alpha , \, x\gamma (\beta)^{-1}\beta \} \subseteq B_{\rho_{\mathbf{A}}}(\phi^{-1}(i),\epsilon) \\
	&\Longleftrightarrow \ \{ \psi (x\gamma (\alpha)^{-1}) \alpha , \, \psi (x\gamma (\beta)^{-1}) \beta \} \subseteq B_{\rho_{\mathbf{A}}}(\phi^{-1}(i),\epsilon).
\end{align*} Applying Remark~\ref{remark:bijective.graph.homomorphisms} with respect to the bijective mappings $\lambda_{\alpha} \colon F\gamma (\alpha) \to F, \, x \mapsto x\gamma (\alpha)^{-1}$ and $\lambda_{\beta} \colon F\gamma (\beta) \to F, \, x \mapsto x\gamma (\beta)^{-1}$, we conclude that \begin{displaymath}
	\mu(\psi,\alpha ,\beta ,\phi ,\epsilon) \geq \mu (F\gamma(\alpha),F\gamma (\beta),\mathcal{V}) \geq (1-\epsilon)\vert F \vert = (1-\epsilon)\vert F' \vert .
\end{displaymath} 

(2)$\Longrightarrow$(1). Suppose that (2) holds for some $k \geq 1$.  It follows that (2) is valid for $k=1$. We utilize Theorem~\ref{theorem:two.element.coverings} and show that $G$ satisfies (3) in Theorem~\ref{theorem:two.element.coverings} for $k=1$ and $\theta = \frac{1}{2}$. Consider a finite subset $E \subseteq G$ and a two-element uniform covering $\mathcal{U}$ of $G_{l}$. Choose some $\epsilon > 0$ and a finite subset $A \subseteq K$ such that $\mathcal{U} \preceq \{ gU \mid x \in G \}$ with regard to the identity neighborhood $U \defeq \{ g \in G \mid \forall a \in A \colon \, d(a,g(a)) < \epsilon \}$ in $G$. As $\vert \mathcal{U} \vert \leq 2$, there is a subset $H \subseteq G$ so that $\mathcal{U}$ is refined by $\mathcal{V} \defeq \{ HU, \, (G\setminus H)U \}$. We denote by $\mathbf{A}$ the substructure of $\mathbf{K}$ with domain $A$ and by $\mathbf{B}$ the substructure of $\mathbf{K}$ with domain $B \defeq \{ g(a) \mid a \in A, \, g \in E \}$. By assumption, there exists a structure $\mathbf{C} \in \mathcal{K}$ such that, for every coloring $\phi \colon {}^{\mathbf{A}}\mathbf{C} \to \{ 0,1 \}$, there is a map $\psi \colon F \to {}^{\mathbf{B}}\mathbf{C}$ with a finite non-empty domain $F$ such that \begin{displaymath}
	\forall \alpha, \beta \in {}^{\mathbf{A}}\mathbf{B} \colon \, \mu(\psi, \alpha ,\beta ,\phi,\tfrac{\epsilon}{3}) \geq \tfrac{1}{2}\vert F \vert .
\end{displaymath} Without loss of generality, we may assume that $\mathbf{C}$ is a finite substructure of $\mathbf{K}$. Since $\mathbf{K}$ is homogeneous, we find an injective map $\gamma_{0} \colon {}^{\mathbf{A}}\mathbf{C} \to \Aut (\mathbf{K})$ such that $\rho_{\mathbf{A}}(\alpha,\gamma_{0}(\alpha)) \leq \frac{\epsilon}{3}$ for all $\alpha \in {}^{\mathbf{A}}\mathbf{C}$. Consider the coloring $\phi \defeq 1_{H} \circ \gamma_{0} \colon {}^{\mathbf{A}}\mathbf{C} \to \{ 0,1 \}$. By hypothesis, there is a map $\psi \colon F \to {}^{\mathbf{B}}\mathbf{C}$ with a finite non-empty domain $F$ such that \begin{displaymath}
	\forall \alpha, \beta \in {}^{\mathbf{A}}\mathbf{B} \colon \, \mu(\psi, \alpha ,\beta ,\phi,\tfrac{\epsilon}{3}) \geq \tfrac{1}{2}\vert F \vert .
\end{displaymath} Again due to homogeneity of $\mathbf{K}$, there exists an injective map $\gamma_{1} \colon F \to \Aut (\mathbf{K})$ such that $\rho_{\mathbf{B}}(\psi(\alpha),\gamma_{1}(\alpha)) \leq \frac{\epsilon}{3}$ for all $\alpha \in F$. Let $F' \defeq \gamma_{1}(F)$. We show that $\mu (F'g,F'h,\mathcal{U}) \geq \tfrac{1}{2}\vert F' \vert$ for any two $g,h \in E$. To this end, let us first observe that \begin{displaymath}
	\rho_{\mathbf{A}}(\psi(\alpha) g|_{\mathbf{A}},\alpha') < \tfrac{\epsilon}{3} \ \Longrightarrow \ \rho_{\mathbf{A}}(\gamma_{1}(\alpha) g|_{\mathbf{A}},\alpha') < \tfrac{2\epsilon}{3} \ \Longrightarrow \ \rho_{\mathbf{A}}(\gamma_{1}(\alpha) g|_{\mathbf{A}},\gamma_{0}(\alpha')|_{\mathbf{A}}) < \epsilon
\end{displaymath} for all $g \in E$, $\alpha \in F$, and $\alpha' \in {}^{\mathbf{A}}\mathbf{C}$. Now, let $g,h \in E$. For all $\alpha, \beta \in F$ and $L \in \{ H, \, G\setminus H \}$, \begin{align*}
	\{ \psi(\alpha) g|_{\mathbf{A}},\, &\psi(\beta) h|_{\mathbf{A}} \} \subseteq B(\gamma_{0}^{-1}(L),\epsilon/3) \\
	&\Longleftrightarrow \ \exists \alpha',\beta' \in \gamma_{0}^{-1}(L) \colon \, \rho_{\mathbf{A}}(\psi(\alpha) g|_{\mathbf{A}},\alpha') < \tfrac{\epsilon}{3}, \, \rho_{\mathbf{A}}(\psi(\beta) h|_{\mathbf{A}},\beta') < \tfrac{\epsilon}{3} \\
	&\Longrightarrow \ \exists \alpha',\beta' \in \gamma_{0}^{-1}(L) \colon \, \rho_{\mathbf{A}}(\gamma_{1}(\alpha) g|_{\mathbf{A}},\gamma_{0}(\alpha')|_{\mathbf{A}}) < \epsilon, \, \rho_{\mathbf{A}}(\gamma_{1}(\beta) h|_{\mathbf{A}},\gamma_{0}(\beta')|_{\mathbf{A}}) < \epsilon \\
	&\Longrightarrow \ \exists k,l \in L \colon \, \rho_{\mathbf{A}}(\gamma_{1}(\alpha) g|_{\mathbf{A}},k|_{\mathbf{A}}) < \epsilon, \, \rho_{\mathbf{A}}(\gamma_{1}(\beta) h|_{\mathbf{A}},l|_{\mathbf{A}}) < \epsilon \\
	&\Longleftrightarrow \ \{ \gamma_{1}(\alpha) g , \, \gamma_{1}(\beta) h \} \subseteq LU.
\end{align*} Applying Remark~\ref{remark:bijective.graph.homomorphisms} with regard to the bijective mappings $\psi_{g} \colon F \to F'g, \, \alpha \mapsto \gamma_{1}(\alpha) g$ and $\psi_{h} \colon F \to F'h, \, x \mapsto \gamma_{1}(\alpha) h$, we conclude that \begin{displaymath}
	\mu (F'g,F'h,\mathcal{U}) \geq \mu (F'g,F'h,\mathcal{V}) \geq \mu (F,g|_{\mathbf{A}},h|_{\mathbf{A}},\phi,\epsilon) \geq \tfrac{1}{2}\vert F \vert = \tfrac{1}{2}\vert F' \vert . \qedhere
\end{displaymath} \end{proof}

We hope that this characterization will be useful both in the study of Fra\"iss\'e classes whose associated automorphism group is amenable, and the study of amenability of particular polish groups that appear as the automorphism groups of  some tractable Fra\"iss\'e classes.

\begin{exmpl} Let us finish with the discussion of a particular example. Let ${\rm Aff}(\mathcal H) = \mathcal H \rtimes O(\mathcal H)$ be the group of affine isometries of a (real) infinite-dimensional Hilbert space. The associated 
Fra\"iss\'e classes is the class of finite metric spaces that are isometrically embeddable into a Hilbert space.
It is easy to see that ${\rm Aff}(\mathcal H)$ is amenable but not extremely amenable, so that Theorem \ref{theorem:metric.ramsey} can give some understanding of the metric combinatorics of the class of these metric spaces. Indeed, in the simplest case $\mathbf{A} = \ast$ and $\mathbf{B}$ is some finite subset of $\mathcal H$. Let $k$ and $\varepsilon>0$ be fixed. Now, according to Theorem \ref{theorem:metric.ramsey}, there exists some larger finite subset $\mathbf{C}$ (depending on $\mathbf B$, $\varepsilon$ and $k$), such that for any coloring $\varphi \colon \mathbf{C} \to \{0,\ldots,k\}$, there exists a finite multi-set $F$ of isometric embeddings of $\mathbf{B}$ into $\mathbf{C}$, such that the following holds: For any pair of points of $\alpha,\beta \in \mathbf{B}$,
at least $(1-\varepsilon)|F|$ of the images of $\alpha$ in $\mathbf{C}$ can be matched with images of $\beta$ in $\mathbf{C}$, so that matched pairs lie $\varepsilon$-close to a color-class.

The notion of extreme amenability (which is more classical in this context) would assert all of $\mathbf B$ could be mapped $\varepsilon$-close to a single color class. This applies for example in the case of the Fra\"iss\'e class of finite metric spaces that embed isometrically into the unit sphere of $\mathcal H$.
\end{exmpl}

\section*{Acknowledgments}

This research was supported by ERC-Starting Grant 277728. Furthermore, the first author is supported by funding of the Excellence Initiative by the German Federal and State Governments.



\bibliographystyle{amsalpha}

\begin{thebibliography}{BJM89}

\bibitem[BJM89]{AnalysisOnSemigroups}
John~F. Berglund, Hugo~D. Junghenn, and Paul Milnes, \emph{Analysis on
  semigroups}, Canadian Mathematical Society Series of Monographs and Advanced
  Texts, John Wiley \& Sons, Inc., New York, 1989, Function spaces,
  compactifications, representations, A Wiley-Interscience Publication.

\bibitem[BO08]{brownc}
Nathanial~P. Brown and Narutaka Ozawa, \emph{{$C^*$}-algebras and
  finite-dimensional approximations}, Graduate Studies in Mathematics, vol.~88,
  American Mathematical Society, Providence, RI, 2008.

\bibitem[Co76]{connes}
Alain Connes, \emph{Classification of injective factors. Cases $II_1$, $II_\infty$, $III_{\lambda}$,}
Ann. of Math. (2) \textbf{104} (1976), no. 1, 73--115. 


\bibitem[F{\o}55]{folner}
Erling F{\o}lner, \emph{On groups with full {B}anach mean value}, Math. Scand.
  \textbf{3} (1955), 243--254.
  
\bibitem[GH97]{GiordanoDeLaHarpe}
Thierry Giordano and Pierre de la Harpe, \emph{Moyennabilit\'e des groupes d\'enombrables et actions sur les 	espaces de {C}antor}, C. R. Acad. Sci. Paris S\'er. I Math. \textbf{324} (1997), no.~11, 1255--1258.
	
\bibitem[GP07]{giordanopestov}
Thierry Giordano and Vladimir Pestov, \emph{Some extremely amenable groups related to operator algebras and ergodic theory.}, J. Inst. Math. Jussieu \textbf{6} (2007), no. 2, 279--315. 

\bibitem[GrH15]{GrigorchukDeLaHarpe}
Rostislav Grigorchuk and Pierre de la Harpe, \emph{Amenability and ergodic properties of topological groups: from Bogolyubov onwards}, July 2015, arXiv: \url{1404.7030 [math.GR]}.

\bibitem[Ha35]{Hall35}
Philip Hall, \emph{{On representatives of subsets}}, Journal of the London
  Mathematical Society \textbf{10} (1935), 26--30.

\bibitem[dlH79]{delaharpe}
Pierre de la Harpe, \emph{Moyennabilit\'e du groupe unitaire et propri\'et\'e P de Schwartz des alg\'ebres de von Neumann.}  Alg\'ebres d'op\'erateurs (S\'em., Les Plans-sur-Bex, 1978), pp. 220--227, Lecture Notes in Math., 725, Springer, Berlin, 1979. 


\bibitem[Is64]{Isbell}
John~R. Isbell, \emph{Uniform spaces}, Mathematical Surveys, No. 12, American
  Mathematical Society, Providence, R.I., 1964.
  
\bibitem[Ka15]{kaichouh}
 Adriane Ka{\"i}chouh, \emph{Amenability and {R}amsey theory in the metric setting}, Fund. Math.
 \textbf{231} (2015), no.~2, 19--38.

\bibitem[KPT05]{KechrisPestovTodorcevic}
 Alexander~S. Kechris, Vladimir Pestov, and Stevo Todor\v{c}evi\'c, \emph{Fra\"\i ss\'e limits, {R}amsey theory, and topological dynamics of automorphism groups}, Geom. Funct. Anal. \textbf{15} (2005), no.~1, 106--189.
 
\bibitem[KR07]{KechrisRosendal}
  Alexander~S. Kechris and Christian Rosendal, \emph{Turbulence, amalgamation, and generic automorphisms of
 	homogeneous structures}, Proc. Lond. Math. Soc. (3) \textbf{94} (2007), no.~2, 302--350.
 
\bibitem[MT14]{melleray}
 Julien Melleray and Todor Tsankov, \emph{Extremely amenable groups via continuous logic}, April 2014, arXiv: \url{1404.4590 [math.LO]}.

\bibitem[Mo13]{moore}
Justin~T. Moore, \emph{Amenability and {R}amsey theory}, Fund. Math.
  \textbf{220} (2013), no.~3, 263--280.

\bibitem[Or55]{Ore}
Oystein Ore, \emph{Graphs and matching theorems}, Duke Math. J. \textbf{22}
  (1955), 625--639.

\bibitem[Pa88]{paterson}
Alan L.~T. Paterson, \emph{Amenability}, Mathematical Surveys and Monographs,
  vol.~29, American Mathematical Society, Providence, RI, 1988.
  
\bibitem[Pa92]{paterson92}
\bysame, \emph{Nuclear $C^{\ast}$-algebras have amenable unitary groups}, Proc. Amer. Math. Soc. \textbf{114} (1992), no.~3, 719--721.

\bibitem[Pe02]{pestov2}
Vladimir Pestov, \emph{Ramsey-{M}ilman phenomenon, {U}rysohn metric spaces, and
  extremely amenable groups}, Israel J. Math. \textbf{127} (2002), 317--357.

\bibitem[Pe05a]{pestov1}
\bysame, \emph{A corrigendum to: ``{R}amsey-{M}ilman phenomenon, {U}rysohn
  metric spaces, and extremely amenable groups''}, Israel J. Math. \textbf{145} (2005),
  375--379.

\bibitem[Pe05b]{pestovbook}
\bysame, \emph{Dynamics of infinite-dimensional groups and {R}amsey-type
  phenomena}, Publica\c c\~oes Matem\'aticas do IMPA. [IMPA Mathematical
  Publications], Instituto Nacional de Matem\'atica Pura e Aplicada (IMPA), Rio
  de Janeiro, 2005, 25${^{{}}{\rm{o}}}$ Col{\'o}quio Brasileiro de
  Matem{\'a}tica. [25th Brazilian Mathematics Colloquium].


\bibitem[RD81]{roelcke}
 Walter Roelcke and Susanne Dierolf, \emph{Uniform structures on topological
  groups and their quotients}, McGraw-Hill International Book Co., New York,
  1981, Advanced Book Program.

\bibitem[Ru02]{runde}
 Volker Runde, \emph{Lectures on amenability}, Lecture Notes in Mathematics,
  vol. 1774, Springer-Verlag, Berlin, 2002.
  
\bibitem[vD36]{dantzig}
David Van Dantzig, \emph{Zur topologischen {A}lgebra. {III}. {B}rouwersche und
	{C}antorsche {G}ruppen}, Compositio Math. \textbf{3} (1936), 408--426.
  
\bibitem[Ya15]{yaacov}
 I.~Ben Yaacov, \emph{Fra\"iss\'e limits of metric structures}, Journal of Symbolic Logic, to appear.

\bibitem[YU10]{YaacovUsvyatsov}
 I.~Ben Yaacov and Alexander Usvyatsov, \emph{Continuous first order logic and local stability}, Trans. Amer. Math. Soc. \textbf{362} (2010), no.~10, 5213--5259.

\end{thebibliography}

\def\cprime{$'$}

\end{document}